\documentclass[11pt]{article}
\bibliographystyle{plain}
\hsize=16truecm

\topmargin -1cm
\oddsidemargin 0.5in
\evensidemargin 0.5in
\hoffset=-5mm
\textheight 8.7in

\usepackage{amsmath,amsthm,amsfonts,epsfig}
\setlength{\textwidth}{5.5in}

\usepackage{amssymb}
\usepackage{enumerate}
\usepackage[mathscr]{euscript}
\usepackage{tikz}

\newtheorem{thm}{Theorem}[section]
\newtheorem{lemma}[thm]{Lemma}
\newtheorem{cor}[thm]{Corollary}
\theoremstyle{definition}
\newtheorem{defn}[thm]{Definition}
\newtheorem{nota}[thm]{Notation}

\newtheorem{remark}[thm]{Remark}
\newtheorem{prop}[thm]{Proposition}
\newtheorem{ex}[thm]{Example}

\newcommand{\N}{\mathbb{N}}
\newcommand{\Z}{\mathbb{Z}}

\newcommand{\R}{\mathbb{R}}
\newcommand{\C}{\mathbb{C}}

\newcommand{\ms}{\mathscr}
\newcommand{\pec}[1]{\overline{#1}^P}

\newcommand{\eps}{\varepsilon}

\DeclareMathOperator{\dM}{dist_M}

\DeclareMathOperator{\Mod}{\text{Mod}}
\DeclareMathOperator{\dist}{dist}
\DeclareMathOperator{\diam}{diam}
\DeclareMathOperator{\dum}{d_M}
\DeclareMathOperator{\dumm}{d_{M}^\prime}

\title{Carath\'eodory-type extension theorem with respect to prime end boundaries}
\author{Joshua Kline, Jeff Lindquist and Nageswari Shanmugalingam
\footnote{The third author was partially supported by grant DMS~\#1800161 from NSF (U.S.A.)}}

\begin{document}
\maketitle

\begin{abstract} 
We prove a Caratheodory-type extension of BQS homeomorphisms between two domains in proper, locally path-connected metric spaces as homeomorphisms between their prime end closures. We also give a Caratheodory-type extension of geometric quasiconformal mappings between two such domains provided the two domains are both Ahlfors $Q$-regular and support a $Q$-Poincare inequality when equipped with their respective Mazurkiewicz metrics. We also provide examples to demonstrate the strengths and weaknesses of prime end closures in this context.
\end{abstract}

\section{Introduction}

The celebrated theorem of Riemann states that every simply connected planar domain that is not the entire
complex plane is conformally equivalent to the unit disk (that is, there is a conformal mapping between
the domain and the disk). However, there are plenty of simply connected planar domains with quite complicated
boundaries, and it is not always possible to extend these conformal mappings from such domains to
their (topological) boundaries. The work of Carath\'eodory~\cite{Car} 
beautifully overcame this obstruction by the use
of cross-cuts and the corresponding notion of prime ends, 
and showed that every such conformal mapping extends to a homeomorphism
between the (Carath\'eodory) prime end boundary
of the domain and the topological boundary (the unit circle) of the
disk. This extension theorem is known in various literature as 
the Carath\'eodory theorem for conformal mappings 
and as the prime end theorem (see~\cite{Car}, \cite[Section~4.6]{Ahl} and~\cite[Chapter~2]{Pom}).

In the case of non-simply connected planar domains or domains in higher dimensional 
Euclidean spaces (and 
Riemannian manifolds), the notion of prime ends of Carath\'eodory is not as fruitful. Indeed, it is not clear
what should play the role of cross-cuts, as not all Jordan arcs in the domain
with both end points in the boundary of the domain will separate the domain into exactly two connected 
components. There are viable extensions of the Carath\'eodory construction for certain types of 
Euclidean domains, called quasiconformally collared domains (see for example~\cite{Na, AW}),
but this construction is not optimal when the domain of interest is not quasiconformally collared,
and indeed, there is currently no analog of quasiconformal collared domains in the setting of
general metric measure spaces.
However, the workhorse of the prime end construction of Carath\'eodory is the nested sequence
of components of the compliments of the Jordan arcs that make up the end. Keeping this in mind, an
alternate notion of prime ends was proposed in~\cite{ABBS}. The construction given there 
proves to be useful even
in the non-smooth setting of domains in metric measure spaces, and is a productive tool in potential
theory and the Dirichlet problem for such domains, see~\cite{BBS2, ES, AS}. 

Branched quasisymmetric mappings (BQS) 
between metric spaces started life as extensions of quasiregular mappings
between manifolds, see~\cite{Guo1, Guo2, GW1, GW2, LP}. The work of Guo and Williams~\cite{GW2} demonstrate the utility of BQS mappings in Stoilow-type factorizations of quasiconformal maps between
metric measure spaces. Between metric spaces of bounded turning, the class of homeomorphisms that are
BQS coincide with the class of quasisymmetric maps. However, for more general metric spaces,
BQS homeomorphisms are not necessarily quasisymmetric, as demonstrated by the slit disk (see
the discussion in Section~2). 
The focus of this present paper
is to obtain a Carath\'eodory type extension theorem for two types of mappings between domains in locally
path-connected metric spaces using the construction of prime ends from~\cite{ABBS}. The two types of 
mappings considered are \emph{branched quasisymmetrc} (BQS) homeomorphisms and \emph{geometric quasiconformal}
homeomorphisms. In Theorem~\ref{thm:BQS-fp-homeo} we prove 
that every BQS homeomorphism between two bounded domains in locally path-connected 
metric spaces has a homeomorphic extension to the respective prime end closures. We also show that if
these domains are finitely connected at their respective 
boundaries, then the extended homeomorphism is also BQS, see Proposition~\ref{prop:BQS-Mazur} and 
the subsequent remark. 
In Theorem~\ref{thm:BQS-unbdd} we also prove a Carath\'eodory extension theorem for BQS mappings
between unbounded domains in locally path-connected metric spaces, where the notion of prime ends
for unbounded domains is an extension of the one from~\cite{ABBS}, first formulated in~\cite{E}.
We point out here that local path-connectedness of the ambient metric spaces
can be weakened to local continuum-connectedness (that is, for each point in the metric space and
each neighborhood of that point, there is a smaller neighborhood of the point such that each pair
of points in that smaller neighborhood can be connected by a continuum in that smaller neighborhood).
However, as local path-connectedness is a more familiar property, we will phrase our requirements in
terms of it. 

In contrast to BQS homeomorphisms, geometric quasiconformal mappings do not always have a homeomorphic extension
to the prime end closure for the prime ends as constructed in~\cite{ABBS}; 
for example, with $\Omega$ the planar unit disk and $\Omega^\prime$ the harmonic comb given by
\begin{equation}\label{eq:example-comb}
\Omega^\prime:=(0,1)\times(0,1)\setminus\left(\bigcup_n\{\tfrac{1}{n+1}\}\times[0,\tfrac12]\right),
\end{equation}
we know that $\Omega$ and $\Omega^\prime$
are conformally equivalent, but the prime end closure $\overline{\Omega}^P$
of $\Omega$ is the closed unit disk which is compact, while
$\overline{\Omega^\prime}^P$ is not even sequentially compact.
However, we show that if both domains are Ahlfors $Q$-regular Loewner
domains for some $Q>1$ when equipped with the induced Mazurkiewicz metrics $\dum$ and $\dumm$ 
respectively, then geometric quasiconformal mappings
between them extend to homeomorphisms between their prime end closures. Should the two domains 
be bounded, then the results in this paper also imply that the geometric quasiconformal mappings
extend as homeomorphisms between the respective Mazurkiewicz boundaries, with the extended map
serving as a quasisymmetric map thanks to~\cite[Theorem~4.9]{HeiK} together with~\cite[Theorem~3.3]{Korte}.
In Theorem~\ref{thm:main-geomQC} we give a Carath\'eodory-type extension for geometric quasiconformal maps.

Our results are related to those of~\cite{A, AW} where extensions, to the prime end closures, 
of certain classes of 
homeomorphisms between domains is demonstrated. In~\cite[Theorem~5]{A} it is shown that a 
homeomorphism between two bounded domains, one of which is of bounded turning and the other of which
is finitely connected at the boundary, has a homeomorphic extension to the relevant prime end 
closures provided that the homeomorphism does not map two connected sets that are a positive distance
apart to two sets that are zero distance apart (i.e., pinched). 
In general, it is not easy to verify that a
given BQS or geometric quasiconformal map satisfies this latter non-pinching condition. 
Moreover, we do not assume that the domains are finitely connected at the boundary. We therefore
prove homeomorphic extension results by hand. The paper~\cite{AW} deals with domains in the
first Heisenberg group $\mathbf{H}^1$, and the prime ends considered there are \emph{not} those
considered here. The notion of prime ends considered in~\cite{AW} are effectively that of Carath\'eodory,
see~\cite[Definition~3.3]{AW}. In particular, topologically three-dimensional analogs of 
the harmonic comb, e.g.~ $\Omega^\prime\times(0,1)$ with $\Omega^\prime$ as 
in~\eqref{eq:example-comb}, will have no prime end considered in~\cite{AW}
with impression containing any point in
$\{0\}\times\{\tfrac12\}\times[0,1]$, whereas each point in this set forms an impression of a
singleton prime end with respect to the notion of prime ends considered here.
The paper~\cite{Sev} has a similar Carath\'eodory type extension results for the so-called
ring mappings between two domains satisfying certain geometric restrictions at their respective
boundaries.

The structure of the paper is as follows. We give the background notions and results related to prime
ends and BQS/geometric quasiconformal maps in Section~2. In Section~3 we describe 
bounded geometry and the Loewner property; these notions are not needed in the subsequent
study of BQS mappings (Sections~4,5) but are needed in the study of geometric quasiconformal maps in
Section~6. Using the prime end boundary for bounded domains as defined in Section~2, in
Section~4 we prove a Carath\'eodory-type extension property of BQS homeomorphisms between bounded domains
in proper locally path-connected metric spaces. In Section~5 we extend the prime end construction to
unbounded domains, and demonstrate a Carath\'eodory-type extension of BQS homeomorphisms between
unbounded domains in proper locally path-connected metric spaces. The final section is concerned with
proving a Carath\'eodory-type extension property of geometric quasiconformal mappings between two domains
in proper locally path-connected metric spaces, under certain geometric restrictions placed on the 
two domains. Note that it is not possible to have a BQS homeomorphism between a bounded domain and an
unbounded domain, but it is possible to have a geometric quasiconformal map between a bounded domain
and an unbounded domain, as evidenced by the conformal map between the unit disk and the half-plane.

\section{Background notions}\label{sec:bdd-prime}

In this section we give a definition of prime ends for bounded domains. An extension of this definition of
prime ends (following~\cite{E})
for unbounded domain is postponed until Section~5 where it is first needed.
We follow \cite[Section 4]{ABBS} in constructing prime ends for bounded domains. 

We say that a metric space $X$ is (metrically) doubling if there is a positive integer $N$
such that whenever $x\in X$ and $r>0$, we can cover the ball $B(x,r)$ by at most $N$ number
of balls of radius $r/2$. 

Let $X$ be a complete, doubling metric space.  
It is known that a complete doubling space is proper, that is, closed and bounded subsets
of the space are compact.
Let $\Omega \subseteq X$ be a domain, that is, $\Omega$ is an open, non-empty,
connected subset of $X$.

\begin{defn}[Mazurkiewicz metric]\label{def:Mazurk}
The Mazurkiewicz metric $\dum$ on $\Omega$ is given by 
\[
\dum(x,y):=\inf_E\diam(E)
\]
for $x,y\in\Omega$, where the infimum is over all continua (compact connected sets) $E\subset\Omega$ that contain $x$ and $y$.
\end{defn}

Note that in general $\dum$ can take on the value of $\infty$; however, if $X$ is locally path-connected (and hence so is $\Omega$),
then $\dum$ is finite-valued and so is an actual metric on $\Omega$.

\begin{defn}[Acceptable Sets]
A bounded, connected subset $E \subseteq \Omega$ is an {\em acceptable set} if 
$\overline{E}$ is compact and 
$\overline{E} \cap \partial \Omega \neq \emptyset$.
\end{defn}

The requirement that $\overline{E}$ be compact was not needed to be explicitly stated in~\cite{ABBS} because it was 
assumed there that the metric space $X$ is complete and doubling, and so closed and bounded subsets are compact;
as $\Omega$ is bounded and $E\subset\Omega$, it follows then that $\overline{E}$ is compact in the setting
considered in~\cite{ABBS}. In our more general setting, we make this an explicit requirement.



Given two sets $E,F\subset\Omega$, we know from Definition~\ref{def:Mazurk} that
\[
\dM(E,F)=\inf_\gamma\diam(\gamma),
\]
where the infimum is over all continua $\gamma\subset\Omega$ with $\gamma\cap E\ne\emptyset\ne\gamma\cap F$.

\begin{defn}[Chains]\label{def:chains}
A sequence $\{E_i\}$ of acceptable sets is a {\em chain} is
\begin{enumerate}[(a)]
\item $E_{i+1} \subseteq E_i$ for all $i \geq 1$
\item $\dM(\Omega \cap \partial E_{i+1}, \Omega \cap \partial E_i) > 0$ for all $i \geq 1$
\item $\bigcap_{i=1}^{\infty} \overline{E_i} \subseteq \partial \Omega$. 
\end{enumerate}
\end{defn}

The second condition above guarantees also that $E_{i+1}$ is a subset of the interior $\text{int}(E_k)$ of
$E_k$.

\begin{defn}[Divisibility of Chains, and resulting ends]
A chain $\{E_i\}$ {\em divides} a chain $\{F_j\}$ (written $\{E_i\} | \{F_j\}$) if for each positive integer $k$ there exists
a positive integer $i_k$ with $E_{i_k} \subseteq F_k$.  Two chains $\{E_i\}$ and $\{F_j\}$ are {\em equivalent} (written 
$\{E_i\} \sim \{F_j\}$) if $\{E_i\} | \{F_j\}$ and $\{F_j\} | \{E_i\}$.  The equivalence class of a given chain 
$\{E_i\}$ under this relation is called an {\em end} and is denoted $[E_i]$.
\end{defn}

\begin{nota}
We write ends using the Euler script font (e.g. $\ms{E}, \ms{F}, \ms{G}$).  If $\{E_i\}$ is a chain representing the 
end $\ms{E}$, we write $\{E_i\} \in \ms{E}$.  
\end{nota}

Note that if $\{E_k\} | \{G_k\}$ and $\{F_k\}\sim\{E_k\}$, then $\{F_k\} | \{G_k\}$. 
Moreover, if $\{E_k\} | \{G_k\}$ and $\{F_k\}\sim\{G_k\}$, then $\{E_k\} | \{F_k\}$. 
Therefore the notion of divisibility is
inherited by ends from their constituent chains.

\begin{defn}[Divisibility of Ends]
An end $\ms{E}$ {\em divides} an end $\ms{F}$ (written $\ms{E} | \ms{F}$) if whenever $E_i \in \ms{E}$ and $F_j \in \ms{F}$, then $\{E_i\} | \{F_j\}$.
\end{defn}

\begin{defn}[Impressions]
The {\em impression} of a chain $\{E_i\}$ is $\bigcap_{i=1}^{\infty} \overline{E_i}$.  If two chains are equivalent, then they have the same impression.  Hence, if $\mathscr{E}$ is an end, we may refer to the impression $I(\mathscr{E})$ of that end.  We may also write $I[E_i]$ if $\{E_i\} \in \ms{E}$.    
\end{defn}

\begin{defn}[Prime Ends and Prime End Boundary]
An end $\ms{E}$ is called a {\em prime end} if $\ms{F}=\ms{E}$ whenever
$\ms{F} | \ms{E}$.  The collection of prime ends of $\Omega$ is called the {\em prime end 
boundary of $\Omega$} and is denoted $\partial_P \Omega$.  The set $\Omega$ together 
with its prime end boundary form the {\em prime end closure} of $\Omega$, denoted 
$\overline{\Omega}^P = \Omega \cup \partial_P \Omega$.
\end{defn}

We next describe a topology on $\overline{\Omega}^P$ using a notion of convergence.

\begin{defn}[Sequential topology for the prime end boundary]
We say that a sequence $\{x_k\}$ of points in $\Omega$ converges to a prime end $\ms{E}=[E_k]$ if for each positive integer $k$
there is a positive integer $N_k$ such that $x_j\in E_k$ for all $j\ge N_k$. It is possible for a sequence of points in $\Omega$
to converge to more than one prime end, see~\cite{ABBS}. 

A sequence of prime ends $\{\ms{E}_j\}$ is said to converge to a prime end $\ms{E}$ if, with each (or, equivalently, with some)
choice of $\{E_{k,j}\}\in\ms{E}_j$ and $\{E_k\}\in\ms{E}$, for each positive integer $k$ there is a positive integer $N_k$ such that
whenever $j\ge N_k$ there is a positive integer $m_{j,k}$ such that $E_{m_{j,k},j}\subset E_k$.
\end{defn}

It is shown in~\cite{ABBS} that the above notion of convergence yields a topology on $\overline{\Omega}^P$ which may
or may not be Hausdorff but satisfies the T1 separation axiom.

Recall that a domain $\Omega$ is of \emph{bounded turning} if there is some $\lambda\ge 1$ such that whenever
$x,y\in\Omega$ we can find a continuum $E_{x,y}\subset\Omega$ with $x,y\in E_{x,y}$ such that
$\diam(E_{x,y})\le \lambda d(x,y)$.

\begin{lemma}[Bounded Turning Boundary]
Let $X$ be a complete metric space.  Let $\Omega \subseteq X$ be a 
bounded domain with $\lambda$-bounded turning.  Then, $\overline{\Omega}^P$ is metrizable by an extension
of the Mazurkiewicz metric $\dum$ and there is a biLipschitz identification
$\overline{\Omega}=\overline{\Omega}^P$.
\end{lemma}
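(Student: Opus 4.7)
The plan is to construct a natural bijection $\Phi:\overline{\Omega}\to\overline{\Omega}^P$ that is the identity on $\Omega$ and sends each $x_0\in\partial\Omega$ to a canonical prime end of impression $\{x_0\}$, then transport the completion of $\dum$ to $\overline{\Omega}^P$ through $\Phi$ and check that this metric induces the sequential prime end topology. The starting point is the two-sided inequality
\[ d(x,y)\le\dum(x,y)\le\lambda\,d(x,y)\quad\text{for }x,y\in\Omega, \]
where the lower bound holds because $\diam(E)\ge d(x,y)$ for any continuum $E$ containing $x,y$, and the upper bound is exactly the $\lambda$-bounded turning hypothesis. This lets me complete $\dum$ to a metric on the closure $\overline{\Omega}$ (viewed as the $d$-completion of $\Omega$) satisfying the same double bound.

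For each $x_0\in\partial\Omega$ I would set $E_k^{x_0}$ equal to the component of $B(x_0,1/k)\cap\Omega$ containing every $y\in\Omega$ with $d(y,x_0)<1/(4\lambda k)$; bounded turning joins any two such $y$'s by a continuum inside $B(x_0,1/k)\cap\Omega$, so this ``principal'' component is unambiguous. Standard checks then confirm that $\{E_k^{x_0}\}$ satisfies all three clauses of Definition~\ref{def:chains}: the nesting $E_{k+1}^{x_0}\subseteq E_k^{x_0}$ holds because consecutive components share a common set of very close interior points; separation follows from $\dum\ge d$ and the inclusion $\Omega\cap\partial E_k^{x_0}\subseteq\{y:d(y,x_0)=1/k\}$, yielding $\dM(\Omega\cap\partial E_{k+1}^{x_0},\Omega\cap\partial E_k^{x_0})\ge 1/k-1/(k+1)>0$; and the impression is the singleton $\{x_0\}\subset\partial\Omega$. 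Set $\Phi(x_0):=[E_k^{x_0}]$.

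The crucial step is the division property: whenever $\ms{E}=[E_j]$ is any end with $x_0\in I(\ms{E})$, the canonical chain $\{E_k^{x_0}\}$ divides $\{E_j\}$. Suppose not; then there is some $j$ for which no $E_k^{x_0}$ is contained in $E_j$. Pick $z_k\in E_k^{x_0}\setminus E_j$, and since $x_0\in\overline{E_{j+1}}$ pick $y_k\in E_{j+1}$ with $d(z_k,x_0),\,d(y_k,x_0)\to 0$. Bounded turning yields a continuum $\gamma_k\subset\Omega$ containing $z_k$ and $y_k$ with $\diam(\gamma_k)\le\lambda\,d(z_k,y_k)\to 0$. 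Because $y_k\in E_{j+1}\subset E_j$ and $z_k\notin E_j$, connectedness of $\gamma_k$ forces it to meet both $\Omega\cap\partial E_j$ and $\Omega\cap\partial E_{j+1}$, so
\[ \dM(\Omega\cap\partial E_{j+1},\Omega\cap\partial E_j)\le\diam(\gamma_k)\to 0, \]
contradicting clause (b) of Definition~\ref{def:chains}. This one argument simultaneously shows that $[E_k^{x_0}]$ is prime, that any prime end whose impression meets $\{x_0\}$ equals $[E_k^{x_0}]$, and that every prime end has singleton impression; hence $\Phi$ extends to a bijection $\overline{\Omega}\to\overline{\Omega}^P$.

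Transporting the extended $\dum$ from $\overline{\Omega}$ to $\overline{\Omega}^P$ through $\Phi$ makes $\Phi$ biLipschitz by construction, and the only remaining task is to confirm that this metric induces the sequential prime end topology of Section~\ref{sec:bdd-prime}. For a sequence $x_n\in\Omega$ the conditions $d(x_n,x_0)\to 0$ and ``$x_n\in E_k^{x_0}$ eventually for each $k$'' are equivalent, because $d(x_n,x_0)<1/(4\lambda k)$ forces $x_n$ into the principal component $E_k^{x_0}$; convergence of prime ends $\Phi(x^n)\to\Phi(x_0)$ reduces similarly to $d(x^n,x_0)\to 0$ by inspecting canonical chains at nearby base points. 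The main obstacle is the division argument in the third paragraph, as this is the sole place where the bounded turning hypothesis interacts with the chain-separation clause to rule out non-singleton impressions and to force primality; once it is available, the biLipschitz identification and the metrization of the prime end topology follow formally.
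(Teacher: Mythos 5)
Your proposal is correct and follows essentially the same route as the paper: a canonical chain of ball-components at each boundary point, the estimate $d\le\dum\le\lambda d$ from bounded turning for the biLipschitz identification, and the interplay of bounded turning with chain condition~(b) to force singleton impressions and primality. The only real difference is organizational — you package the paper's three separate verifications (primality of the canonical end, uniqueness of the prime end over a boundary point, and singleton impressions of arbitrary prime ends) into a single division lemma, which is a clean consolidation, though you should still write out the omitted check that $\Omega\cap\partial E_k^{x_0}$ lies on the sphere $\{d(\cdot,x_0)=1/k\}$, since that inclusion itself needs the bounded turning hypothesis.
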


\begin{proof}
Let $x \in \partial \Omega$.  Let $x_i \in \Omega$ be such that $d(x_i, x) < 2^{-i}$.  
For each $i$, let $\gamma_i$ be a continuum with $x_i, x_{i+1} \in \gamma_i$ and 
$\diam(\gamma_i) \leq \lambda d(x_i, x_{i+1})$, and  let $\Gamma_j = \bigcup_{i=j}^\infty \gamma_i$.  
Then, $\Gamma_j$ is connected for each $j$ and $\diam(\Gamma_j) \leq \lambda 2^{-j+1}$.  Let $E_j$ 
be the connected component of $B(x, \lambda 2^{-j+2}) \cap \Omega$ containing $\Gamma_j$.  
Then $x\in\partial\Omega\cap\overline{E_j}$, and so $E_j$ is an acceptable set.

We claim $[E_j]$ is a prime end with $I(\{E_j\}) = \{x\}$.  We first show $\{E_j\}$ is a chain.  
Condition~(a) is clear as $\Gamma_{j+1} \subseteq \Gamma_j$ 
for each $j$, and $B(x, \lambda 2^{-j+1}) \subseteq B(x, \lambda 2^{-j + 2})$ for each $j$.  
If $y\in B(x,\lambda 2^{-j+2})\cap\Omega\cap\partial E_j$, then there is some $\rho>0$ such that
$B(y,\rho)\subset B(x,\lambda 2^{-j+2})\cap\Omega$, and there is a point 
$w\in B(y,\rho/[3\lambda])\cap E_j$. Thus whenever $z\in B(y,\rho/[3\lambda])$, by the bounded turning
properety of $\Omega$ we know that there is a continuum in $\Omega$ containing both $z$ and $w$, with
diameter at most $2\rho/3<\rho$, and so this continuum will lie inside $B(x,\rho)$. It follows that
$B(y,\rho/[3\lambda])\subset E_j$, violating the assumption that $y\in\partial E_j$. Therefore
$\Omega\cap \partial E_j\subset \partial B(x,\lambda 2^{-j+2})$. Hence
\[
\dM(\Omega\cap \partial E_j,\Omega\cap \partial E_{j+1})
\ge \dist(\Omega\cap \partial E_j,\Omega\cap \partial E_{j+1})\ge 2^{-j+1}>0,
\]
that is, Condition~(b) is valid.
Condition~(c) follows as $\{x\}=\bigcap_j\overline{E_j}$.  Hence, $\{E_j\}$ is a chain and
therefore $[E_j]$ is an end. That it is a prime end follows from~\cite{ABBS} together with the fact that
$I[E_j]$ is a singleton set.

%

We show that if $\mathscr{F}$ is a prime end with 
$I(\mathscr{F}) = \{x\}$, then $\mathscr{E} = \mathscr{F}$ where $\ms{E}$ is the one constructed above. 
To prove this it suffices
to show that $\ms{F} | \ms{E}$. Let $\ms{E}=[E_k]$ as above, and choose $\{F_k\}\in\ms{F}$. Fix a positive integer $k$
and let  $w_j\in \Omega\cap B(x,2^{-(2+k)})$ and $z_j\in E_k\cap B(x,2^{-(2+k)})$. Then by the bounded
turning property, there is a continuum $K_j$ containing both $w_j$ and $z_j$ such that 
$\diam(K_j)\le \lambda d(z_j,w_j)<\lambda 2^{-k-2}$. It follows that $K_j\subset B(x,\lambda 2^{-k+2})\cap\Omega$,
and so $K_j\subset E_k$. It follows that $B(x,2^{-k-2})\cap\Omega\subset E_k$. On the other hand,
$\bigcap_j\overline{F_j}=\{x\}$, and as $\overline{F_1}$ is compact with $F_{j+1}\subset F_j$ for each $j$, it follows that
for each $k$ there is some $j_k\in\N$ such that $F_{j_k}\subset B(x,2^{-k-2})\cap\Omega$. It then follows that
$F_{j_k}\subset E_k$. Therefore we have that $\{F_j\} | \{E_k\}$, that is, $\ms{F} | \ms{E}$ as desired.


We now show that if $\mathscr{F}$ is a prime end, then $I(\mathscr{F})$ consists of a single point. 
Let $x\in I(\ms{F})$ and choose $\{F_k\}\in\ms{F}$. Then for each positive integer $k$ we set
\[
\tau_k:=[3\lambda]^{-1}\min\{2^{-k-2}, \dM(\Omega\cap\partial F_j,\Omega\cap\partial F_{j+1})\, :\, j=1,\cdots, k\}.
\]
Note that $\tau_{k+1}\le \tau_k$ with $\lim_k\tau_k=0$.
We can then find a point $x_k\in B(x,\tau_k)\cap F_{k+1}$ since $x\in\partial F_j$ for each $j\in\N$. 
Note that as $F_{j+1}\subset F_j$ for each $j$, we necessarily have $x_{k+1}, x_k\in F_{k+1}$. By the bounded turning
property of $\Omega$ we can find a continuum $K_k\subset\Omega$ such that $x_k,x_{k+1}\in K_k$ and
$\diam(K_k)\le \lambda d(x_k,x_{k+1})<2\lambda\tau_k$. As
\[
2\lambda\tau_k<\dM(\Omega\cap\partial F_k,\Omega\cap\partial F_{k+1})
\]
and $x_k,x_{k+1}\in F_{k+1}$, it follows that $K_k\subset F_k$ for each positive integer $k$. A similar argument now also
shows that $B(x,\tau_k)\cap\Omega$ is contained in the connected component $G_k$ of $B(x,2\lambda\tau_k)\cap\Omega$
containing $K_k$, and hence $B(x,\tau_k)\cap\Omega\subset G_k\subset F_k$. It follows that $\{G_k\}$ is a chain in $\Omega$
with $\{G_k\} | \{F_k\}$, and as $\{F_k\}$ is a prime end, it follows that $[G_k]=\ms{F}$. Thus we have $I(\ms{F})=I([G_k])=\{x\}$.

The above argument shows that $\partial_P\Omega$ consists solely of singleton prime ends, and hence by
the results of~\cite{ABBS} we know that $\overline{\Omega}^P$ is compact and is
metrizable by an extension of the Mazurkiewicz
metric $\dum$. Moreover, for each $x\in\partial\Omega$ there is exactly one prime end $\ms{E}_x\in\partial_P\Omega$
such that $I(\ms{E}_x)=\{x\}$. To complete the proof of the lemma, note that if $x,y\in\Omega$, we can find a continuum
$E_{x,y}\subset\Omega$ with $x,y\in E_{x,y}$ such that $\diam(E_{x,y})\le \lambda d(x,y)$; therefore 
$d(x,y)\le \dum(x,y)\le \lambda d(x,y)$. Since $\overline{\Omega}^P$ is the completion of $\Omega$ with respect to the
metric $\dum$, this biLipschitz correspondence extends to $\overline{\Omega}\to\overline{\Omega}^P$ as wished for.
\end{proof}

Throughout this paper, $\Omega$ and $\Omega^\prime$ are domains in $X$. Recall that a set $E\subset X$
is a continuum if it is connected and compact. Such a set $E$ is nondegenerate if in addition it has at least
two points.

\begin{defn}\label{defn:finiteConnBdy}
A domain $\Omega$ is said to be \emph{finitely connected at} a point $x\in\partial\Omega$ if for every $r>0$ the following two
conditions are satisfied:
\begin{enumerate}
\item There are only finitely many connected components $U_1,\cdots U_k$ of $B(x,r)\cap\Omega$ with $x\in\partial U_i$
for $i=1,\cdots, k$,
\item there is some $\rho>0$ such that $B(x,\rho)\cap\Omega\subset \bigcup_{j=1}^k U_j$.
\end{enumerate}
\end{defn}

It was shown in~\cite{ABBS} (see also~\cite{BBS1}) that $\Omega$ is finitely connected at every point of its boundary if and only if 
every prime end of $\Omega$ has a singleton impression (that is, its impression has only one point) and
$\partial_P\Omega$ is compact.

It is an open problem whether, given a bounded domain, every end of that domain is divisible by a prime end,
see for example~\cite{ABBS, AS, ES}. Since this property is of importance in the theory of Dirichlet problem 
for prime end boundaries (see for instance~\cite{AS, ES}), the following useful lemma is also of independent interest.

\begin{lemma}[Divisibility]\label{lem:end-div-prime-finite-conn}
Suppose that $\Omega$ is a bounded domain that is finitely connected at $x\in\partial\Omega$, and let $[E_k]$ be an
end of $\Omega$ with $x\in I[E_k]$. Then there is a prime end $[G_k]$ such that $[G_k]\, |\, [E_k]$. 
\end{lemma}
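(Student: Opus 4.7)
The plan is to construct a chain $\{G_n\}$ whose terms are connected components of small balls around $x$ in $\Omega$, threaded so that $G_n\supseteq G_{n+1}$ shrinks to $x$ and so that every tail $E_k$ of the given end still accumulates at $x$ through $G_n$. Finite connectedness at $x$ is exactly what makes this possible: at each scale $r>0$ there are only finitely many components $U_1(r),\ldots,U_{m(r)}(r)$ of $B(x,r)\cap\Omega$ having $x$ in their closure, and some $\rho(r)>0$ with $B(x,\rho(r))\cap\Omega\subseteq\bigcup_jU_j(r)$. This finiteness drives a pigeonhole at each inductive step.

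Concretely, fix $r_n\searrow 0$ (for instance $r_n=2^{-n}$) and define $G_n$ inductively as a connected component of $B(x,r_n)\cap\Omega$ touching $x$, contained in $G_{n-1}$, and satisfying $x\in\overline{E_k\cap G_n}$ for every $k$. For the inductive step, let $\ms{W}_n$ be the (finite) set of components of $B(x,r_n)\cap\Omega$ touching $x$ that lie inside $G_{n-1}$; choosing $r_n<\rho(r_{n-1})$ makes $\ms{W}_n$ cover a neighborhood of $x$ inside $G_{n-1}$, so $\ms{W}_n\neq\emptyset$. For each $k$, since $x\in\overline{E_k\cap G_{n-1}}$, at least one $W\in\ms{W}_n$ satisfies $x\in\overline{E_k\cap W}$; the subfamily $\ms{W}_n^{(k)}\subseteq\ms{W}_n$ of such $W$ is nonempty and decreasing in $k$ (because $E_{k+1}\subseteq E_k$), hence $\bigcap_k\ms{W}_n^{(k)}$ is a nonempty subset of a finite set, from which I take $G_n$.

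The chain conditions then fall out: nesting is automatic; $\bigcap_n\overline{G_n}=\{x\}\subseteq\partial\Omega$ since $\diam G_n\le 2r_n\to 0$; and any $y\in\Omega\cap\partial G_n$ must satisfy $d(x,y)=r_n$, because otherwise local connectedness of $\Omega$ would place an entire neighborhood of $y$ inside $G_n$. This gives $\dM(\Omega\cap\partial G_{n+1},\Omega\cap\partial G_n)\ge r_n-r_{n+1}>0$. For divisibility, fix $k$ and let $d_k=\dM(\Omega\cap\partial E_{k+1},\Omega\cap\partial E_k)>0$; choose $n$ so large that $2r_n<d_k$. Since $x\in\overline{E_{k+1}\cap G_n}$ the set $G_n$ contains a point of $E_{k+1}$, and if $G_n$ also contained a point outside $E_k$, a path in the locally path-connected open connected set $G_n$ joining the two would be a continuum in $\Omega$ meeting both $\Omega\cap\partial E_{k+1}$ and $\Omega\cap\partial E_k$ with diameter at most $2r_n<d_k$, contradicting the definition of $d_k$. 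Hence $G_n\subseteq E_k$, and $\{G_n\}\,|\,\{E_k\}$.

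It remains to verify that $[G_n]$ is prime. Given $\{F_j\}\,|\,\{G_n\}$, the identity $\bigcap_nG_n=\emptyset$ forces the indices $j(n)$ with $F_{j(n)}\subseteq G_n$ to tend to infinity, and running the same continuum/diameter argument with $\{F_j\}$ in place of $\{E_k\}$ yields $G_n\subseteq F_j$ for every $j$ once $n$ is sufficiently large, giving $\{F_j\}\sim\{G_n\}$. I expect the principal obstacle to be the inductive pigeonhole: the construction has to preserve simultaneously, at each scale, both the nesting of components and the accumulation at $x$ of \emph{every} tail $E_k$, and it is only the monotonicity of $\{E_k\}$ against the finiteness of $\ms{W}_n$ that keeps all tails in play as the scale shrinks.
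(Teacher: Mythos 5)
Your proof is correct, and it follows the paper's overall architecture: both arguments exploit finite connectedness at $x$ to get, at each scale $2^{-j}$, only finitely many components of $B(x,2^{-j})\cap\Omega$ with $x$ in their boundary, thread a nested sequence of these into a chain with impression $\{x\}$, and then get divisibility of $[E_k]$ from the positive Mazurkiewicz separation of $\Omega\cap\partial E_k$ and $\Omega\cap\partial E_{k+1}$ exactly as you do (a short path in the small open connected $G_n$ would otherwise be a continuum of diameter $<\dM(\Omega\cap\partial E_k,\Omega\cap\partial E_{k+1})$ joining the two relative boundaries). Where you genuinely diverge is in the selection step. The paper forms the set of \emph{all} components, over all scales, that meet every $E_k$, organizes them into a finitely branching tree under inclusion, and extracts an infinite branch by a K\H{o}nig-type argument (its ``property (P)''). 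You instead run a direct induction carrying the stronger invariant $x\in\overline{E_k\cap G_n}$ for every $k$; this localizes $E_k\cap G_n$ at $x$, so finite connectedness at scale $r_{n+1}$ plus the monotone pigeonhole (the nonempty families $\ms{W}_n^{(k)}$ decrease in $k$ inside a finite set) hands you the next component with no global tree argument. The stronger invariant is what makes the one-step induction close — with only ``$G_n\cap E_k\ne\emptyset$'' the step would fail, which is precisely why the paper needs the tree. Your version is more elementary and also yields the divisibility input ($G_n$ meets $E_{k+1}$) for free. Finally, for primality the paper appeals to the results of~\cite{ABBS} on chains with singleton impression, whereas you verify it directly via the same shrinking-diameter argument (after noting, correctly, that $\bigcap_nG_n=\emptyset$ forces the indices with $F_{j(n)}\subset G_n$ to tend to infinity); both are valid, and your self-contained verification is a small bonus.
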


\begin{proof}
For each positive integer $j$ let $C^j_1, \cdots, C^j_{k_j}$ be the connected components
of $B(x,2^{-j})\cap \Omega$ that contain $x$ in their boundary. For each choice of positive integers $j$ and $k$,
there is at least one choice of $m\in\{1,\cdots,k_j\}$ such that $C^j_m\cap E_k$ is non-empty. 

Let $V^\prime$ be the collection of all $C^j_m$, $j\in\N$ and $m\in\{1,\cdots, k_j\}$ for which $C^j_m$ intersects
$E_k$ for infinitely many positive integers $k$ (and hence, by the nested property of the chain $\{E_k\}$,
all positive integers $k$), and let $V:=V^\prime\cup\{\Omega\}$.  
As $\Omega$ is finitely connected at the boundary, it follows that for each $j$
there is at least one $m\in\{1,\cdots, k_j\}$ such that $C^j_m$ has this property. We set $C^0_1:=\Omega$.
For non-negative integer $j$ we say that $C^j_m$ is a 
neighbor of $C^{j+1}_n$ if and only if 
$C^{j+1}_n\subset C^j_m$. This neighbor relation creates a tree structure, with $V$ as its vertex set, with
each vertex of finite degree. 

Note that for each positive integer $M$, there is a path in this tree (starting from the root vertex
$C^0_1=\Omega$) with length at least $M$. Because each vertex has finite degree,
it follows that there is a path in this tree with infinite
length. To see this we argue as follows. Let property (P) be the property, applicable to vertices $C^j_m$, 
that the sub-tree with the vertex as its root vertex has arbitrarily long paths starting at that vertex.
For each non-negative integer $j$ the vertex $C^j_m$ has finite degree, and so if $C^j_m$ has property (P) then
it has at least one descendant neighbor (also known as a child in graph theory) $C^{j+1}_n$ with property (P).
Since $C^0_1=\Omega$ has property (P) as pointed out above, we know that there is a choice of
$m_1\in\{1,\cdots, k_1\}$ such that the vertex $C^1_{m_1}$ also has property (P). From here we can find
$m_2\in\{1,\cdots, k_2\}$ such that $C^1_{m_2}$ also has property (P) \emph{and} $C^1_{m_1}$ is a neighbor
of $C^2_{m_2}$. Inductively we can find $C^j_{m_j}$ for each positive integer $j$ to create this path.

Denoting this path by 
$\Omega\sim C^1_{m_1}\sim C^2_{m_2}\sim\cdots$, we see that the collection
$\{C^j_{m_j}\}$ is a chain for $\Omega$ with impression $\{x\}$. As a chain with singleton impression, it 
belongs to a prime end $[G_k]$. Recall that $X$ is locally path-connected and $\Omega\subset X$ is 
an open connected set. Therefore,
for each positive integer $i$ we see that there is some
$j_i$ such that $C^j_{m_j}\subset E_i$ when $j\ge j_i$, 
and so $[G_k]\, |\, [E_k]$ (for if not, then for each $j$ the open set
$C^j_{m_j}$ contains a point from $E_{i+1}$ and a point from $\Omega\setminus E_i$, and so 
$\dM(\Omega\cap\partial E_i,\Omega\cap\partial E_{i+1})\le 2^{1-j}$ for each $j$, violating the property of
$\{E_k\}$ being a chain).
\end{proof}

The following two lemmata regarding ends are useful to us.

\begin{lemma}\label{lem:open-end}
Let $\{E_k\}$ be a chain of $\Omega$. Then there is a chain $\{F_k\}$ of $\Omega$ that is equivalent to
$\{E_k\}$ such that for each $k\in\N$ the set $F_k$ is open.
\end{lemma}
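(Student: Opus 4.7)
The plan is to take $F_k$ to be the connected component of $\text{int}(E_k)$ that contains $E_{k+1}$, and then check that the resulting sequence $\{F_k\}$ is an open chain equivalent to $\{E_k\}$.

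As noted immediately after Definition~\ref{def:chains}, condition~(b) forces $E_{k+1}\subset\text{int}(E_k)$ for every $k$. Since $E_{k+1}$ is nonempty and connected, and since local path-connectedness of $X$ ensures that connected components of open sets are open, the set $F_k$ is then a well-defined open connected subset of $\Omega$ with $E_{k+1}\subset F_k\subset\text{int}(E_k)\subset E_k$.

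To see that each $F_k$ is acceptable, note that $\overline{F_k}\subset\overline{E_k}$ is compact. For the nonempty intersection of $\overline{F_k}$ with $\partial\Omega$, I would use $E_j\subset E_{k+1}\subset F_k$ for every $j\ge k+1$ to obtain $\emptyset\ne\bigcap_j\overline{E_j}\subset\overline{F_k}$, combined with condition~(c) of Definition~\ref{def:chains}. For the chain conditions applied to $\{F_k\}$: the nesting $F_{k+1}\subset F_k$ holds because $F_{k+1}\subset\text{int}(E_{k+1})\subset\text{int}(E_k)$ is connected and meets $F_k$ (both contain $E_{k+2}$), while $F_k$ is a component of $\text{int}(E_k)$. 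Condition~(c) for $\{F_k\}$ follows at once from $\overline{F_k}\subset\overline{E_k}$.

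The technical heart of the argument is the boundary-separation condition~(b). My plan is to establish the containment $\Omega\cap\partial F_k\subset\Omega\cap\partial E_k$: since $F_k$ is a connected component of the open set $\text{int}(E_k)$, one has $\partial F_k\cap\text{int}(E_k)=\emptyset$, because any boundary point of $F_k$ lying in $\text{int}(E_k)$ would belong to some component of $\text{int}(E_k)$, which by openness of $F_k$ would be forced to coincide with $F_k$, contradicting the point being on the boundary of $F_k$. Hence $\partial F_k\subset\overline{E_k}\setminus\text{int}(E_k)=\partial E_k$. Combining with the elementary monotonicity that $A\subset B$ and $A'\subset B'$ imply $\dM(A,A')\ge\dM(B,B')$ (every admissible continuum for the smaller pair is admissible for the larger pair), I obtain $\dM(\Omega\cap\partial F_k,\Omega\cap\partial F_{k+1})\ge\dM(\Omega\cap\partial E_k,\Omega\cap\partial E_{k+1})>0$. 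Equivalence of the chains then follows from $F_k\subset E_k$ (yielding $\{F_k\}\,|\,\{E_k\}$) and $E_{k+1}\subset F_k$ (yielding $\{E_k\}\,|\,\{F_k\}$). The main obstacle, such as it is, is the boundary-containment claim driving condition~(b) for the new chain; everything else is routine bookkeeping.
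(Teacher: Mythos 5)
Your proposal is correct and takes essentially the same route as the paper's own proof: the same choice of $F_k$ as the connected component of $\text{int}(E_k)$ containing $E_{k+1}$, the same key containment $\partial F_k\subset\partial E_k$ established via maximality of components (using local path-connectedness for openness), and the same monotonicity of $\dM$ to deduce condition~(b), with equivalence read off from $E_{k+1}\subset F_k\subset E_k$.
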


\begin{proof}
For each positive integer $k$ we choose $F_k$ to be the connected component of $\text{int}(E_k)$ that 
contains $E_{k+1}$. It is clear that 
\begin{equation}\label{eq:AAA}
F_{k+1}\subset E_k\subset F_k
\end{equation} 
for each $k\in\N$ and that $F_k$ is
connected and open. Moreover, as $E_{k+1}\subset F_k$, it follows that 
$\bigcap_k\overline{E_k}\subset\bigcap_k\overline{F_k}\subset\bigcap_k\overline{E_k}\subset\partial\Omega$,
and so we have that
\begin{equation}\label{eq:AA}
\emptyset\ne\bigcap_k\overline{F_k}.
\end{equation}
We show now that $\partial F_k\subset\partial E_k$, from which it will follow that
\[
\dM(\Omega\cap\partial F_k,\Omega\cap\partial F_{k+1})>0
\]
because $E_k, E_{k+1}$ also satisfy analogous condition. To see~\eqref{eq:AA}, let $x\in\partial F_k$.
Then, for each $\eps>0$, the ball $B(x,\eps)$ intersects $F_k$, and so also intersects $E_k$. Hence
$x\in\overline{E_k}$. If $x\not\in\partial E_k$, then $x\in\text{int}(E_k)$, and in this case there is a ball
$B(x,\tau)\subset E_k$; we argue that this is not possible. Indeed, by the local path-connectedness of $X$
there must be a connected open set $U\subset\Omega$ such that $x\in U\subset B(x,\tau)\subset E_k$, and
as $x\in\partial F_k$, necessarily $U$ intersects $F_k$ as well. Then $F_k\cup U$ is connected, with
$F_k\cup U\subset E_k$; this is not possible as $F_k$ is the largest open connected subset of $E_k$.
Therefore $x\not\in\text{int}(E_k)$, and so $x\in\partial E_k$.

From the above we have that $\{F_k\}$ is a chain of $\Omega$. By~\eqref{eq:AAA} we also know that this chain
is equivalent to the original chain $\{E_k\}$.
\end{proof}

\begin{lemma}\label{lem:connected-impression}
If $\{E_k\}$ is a chain of $\Omega$, then its impression $I\{E_k\}$ is a compact connected subset of $X$.
\end{lemma}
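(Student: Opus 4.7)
The plan is to reduce the statement to the standard topological fact that a decreasing nested intersection of compact connected sets in a Hausdorff space is compact and connected, and then check that the sets $\overline{E_k}$ satisfy these hypotheses.

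First I would observe that each $\overline{E_k}$ is compact and connected. Compactness is explicit in the definition of acceptable sets. Connectedness follows because $E_k$ is connected (again from the definition of acceptable sets) and the closure of a connected set is connected. Moreover $E_{k+1}\subseteq E_k$ by condition~(a) in Definition~\ref{def:chains} yields $\overline{E_{k+1}}\subseteq\overline{E_k}$, so $\{\overline{E_k}\}$ is a decreasing nested sequence of nonempty compact connected subsets of $X$. In particular, by the finite intersection property, $I\{E_k\}=\bigcap_k\overline{E_k}$ is nonempty and compact.

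It remains only to show connectedness of the intersection. I would argue by contradiction: suppose $I\{E_k\}=A\cup B$ where $A,B$ are disjoint nonempty relatively closed subsets of $I\{E_k\}$. Because $I\{E_k\}$ is itself closed in $X$, both $A$ and $B$ are closed, and being closed subsets of the compact set $\overline{E_1}$, they are compact. Since $X$ is Hausdorff (in fact metric), there exist disjoint open sets $U\supset A$ and $V\supset B$. Consider the sets $K_k:=\overline{E_k}\setminus(U\cup V)$, a decreasing nested sequence of compact sets whose intersection is $I\{E_k\}\setminus(U\cup V)=\emptyset$. By compactness, some $K_N$ is empty, so $\overline{E_N}\subset U\cup V$. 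But $\overline{E_N}$ is connected and meets both $U$ (since $A\subset\overline{E_N}\cap U$) and $V$ (since $B\subset\overline{E_N}\cap V$), contradicting $\overline{E_N}\subset U\cup V$ with $U,V$ disjoint open.

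There is no real obstacle here: the content of the lemma is almost entirely the standard fact on nested intersections of compact connecta, and the only thing to verify from the paper's definitions is that the individual $\overline{E_k}$ are compact and connected, both of which are immediate from the definition of acceptable sets together with condition~(a) of Definition~\ref{def:chains}.
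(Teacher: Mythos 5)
Your proposal is correct and follows essentially the same route as the paper: separate the supposedly disconnected impression by disjoint open sets and derive a contradiction from the connectedness and compactness of the nested sets $\overline{E_k}$. The only cosmetic difference is that you conclude via the finite intersection property (some $\overline{E_N}$ lands entirely in $U\cup V$), whereas the paper extracts a convergent sequence of points lying outside the separating open sets; both closings are standard and equivalent in this setting.
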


\begin{proof}
For each $j\in\N$ set $K_j:=\overline{E_j}$. Then $K_j$ is compact, non-empty, and connected with 
$K_{j+1}\subset K_j$. Set $K:=\bigcap_jK_j$. Suppose that $K$ is not connected. Then there are open sets
$V,W\subset X$ with $K\subset V\cup W$, $K\cap V\ne\emptyset$, $K\cap W\ne \emptyset$, and 
$K\cap V\cap W$ empty. We set $K_V:=K\cap V$ and $K_W:=K\cap W$. Then $K_V=K\setminus W$ and 
$K_W=K\setminus V$, and so both $K_V$ and $K_W$ are compact sets. Moreover, 
$K_V\cap K_W=K\cap V\cap W=\emptyset$. Therefore 
\[
\dist(K_V,K_W)>0,
\]
and so there are open sets $O_V, O_W\subset X$ such that $K_V\subset O_V$, $K_W\subset O_W$, and
$O_V\cap O_W=\emptyset$. Set $U:=O_v\cup O_W$. As each $K_j$ is connected, it follows that we cannot
have $K_j\subset U$, for then we will have $K_j\subset O_V\cup O_W$ with $K_j\cap O_V\cap O_W$ empty
but $K_j\cap O_V\supset K\cap O_V\ne\emptyset$ and $K_j\cap O_W\supset K\cap O_W\ne \emptyset$.
It follows that for each $j\in\N$ there is a point $x_j\in K_j\setminus O$. This forms a sequence in
the compact set $K_1$, and so there is a subsequence $x_{j_n}$ that converges to some point $x\in K_1$.

For each $k\in\N$ we have that $K_k$ is compact and $\{x_k\}_{j\ge k}$ a sequence there; so
$x\in K_k$ for each $k\in\N$, that is, $x\in K$. However, $K\subset O$ and each $x_j\in X\setminus O$; therefore
we must have $x\in X\setminus O$ (recall that $O$ is open), leading us to a contradiction. Therfore
we conclude that it is not possible for $K$ to be not connected. 
\end{proof}


\begin{defn}[BQS homeomorphisms]
A homeomorphism $f:\Omega\to\Omega^\prime$ is said to be a \emph{branched quasisymmetric homeomorphism} (or BQS
homeomorphism) if there is a monotone increasing map $\eta:(0,\infty)\to(0,\infty)$ with
$\lim_{t\to0^+}\eta(t)=0$ such that whenever $E,F\subset\Omega$ are nondegenerate continua with $E\cap F$ non-empty, we have 
\[
\frac{\text{diam}(f(E))}{\text{diam}(f(F))}\le \eta\left(\frac{\text{diam}(E)}{\text{diam}(F)}\right).
\]
\end{defn}

\begin{defn}[Quasisymmetry]
A homeomorphism $f:\Omega\to\Omega^\prime$ is said to be
an $\eta$-quasisymmetry if $\eta:[0,\infty)\to[0,\infty)$ is a homeomorphism
and for each triple of distinct points $x,y,z\in\Omega$ we have
\[
\frac{d_Y(f(x),f(y))}{d_Y(f(x),f(z))}\le \eta\left(\frac{d_X(x,y)}{d_X(x,z)}\right).
\]
\end{defn}

Quasisymmetries are necessarily BQS homeomorphisms, but not all BQS homeomorphisms are quasisymmetric 
as the mapping $f$ in Example~\ref{ex:not-BQS-Plane-Sphere} shows.
However, if both $\Omega$ and $\Omega^\prime$ are of bounded turning, then every
BQS homeomorphism is necessarily quasisymmetric. BQS homeomorphisms
between two locally path-connected metric spaces continue to be
BQS homeomorphisms when we replace the original metrics with their respective Mazurkiewicz
metrics, see Proposition~\ref{prop:BQS-Mazur}. When equipped with the respective Mazurkiewicz
metrics, the class of BQS homeomorphisms coincides with the class of quasisymmetric maps. 
However, a domain equipped with its Mazurkiewicz metric could have a different prime end
structure than the prime end structure obtained with respect to the original metric,
see Example~\ref{ex:long-toothed-double-comb} for instance.
Hence, considering BQS maps as quasisymmetric maps between two metric spaces equipped with
a Mazurkiewicz metric would not give a complete picture of the geometry of the domain.

There are many notions of quasiconformality in the metric setting, the above two being examples of them. The following
is a geometric version.

\begin{defn}[Geometric quasiconformality]
Let $\mu_Y$ be an Ahlfors $Q$-regular measure on $Y$ and $\mu_X$ be a doubling measure on $X$.
A homeomorphism $f:\Omega\to\Omega^\prime$ is a \emph{geometrically quasiconformal} mapping if there is a constant 
$C\ge 1$ such that whenever $\Gamma$ is a family of curves in $\Omega$, we have
\[
C^{-1} \Mod_Q(f\Gamma)\le \Mod_Q(\Gamma)\le C \Mod_Q(f\Gamma).
\]
Here, by $\Mod_p(\Gamma)$ we mean the number
\[
\Mod_p(\Gamma):=\inf_\rho\int_\Omega\rho^p\, d\mu_X
\]
with infimum over all non-negative Borel-measurable functions $\rho$ on $\Omega$ for which $\int_\gamma\rho\, ds\ge 1$
whenever $\gamma\in\Gamma$.
\end{defn}

From the work of Ahlfors and Beurling~\cite{Ahl, Ahl1, AB}
we know that conformal mappings between two planar domains are necessarily
geometrically quasiconformal; a nice treatment of quasiconformal mappings on Euclidean domains
can be found in~\cite{Va}. 
Hence every simply connected planar domain that is not the entire complex plane is
geometrically quasiconformally equivalent to the unit disk. However, as Example~\ref{example:comb} shows, not every
conformal map is a BQS homeomorphism; hence among homeomorphisms between two Euclidean domains, 
the class of BQS homeomorphisms is smaller than the class of geometric quasiconformal maps. Furthermore, 
from Example~\ref{example:slit} we know that while quasisymmetric maps between two Euclidean domains are 
BQS homeomorphisms, not all BQS homeomorphisms are quasisymmetric. 
Thus the narrowest classification of
Euclidean domains is via quasisymmetric maps, the next narrowest is via BQS homeomorphisms. 
Among locally Loewner metric measure spaces,
geometric quasiconformal maps provide the widest classification. 
Therefore, even in classifying simply connected
planar domains, a deeper understanding of the geometry of the domains is gained by considering the
Carath\'eodory prime end compactification in tandem with the prime ends constructed in~\cite{ABBS, ES, E}.

\section{Bounded geometry and Loewner property}

Not all BQS homeomorphisms are quasisymmetric, as shown by Example~\ref{example:slit} from the previous 
section, and not all geometrically 
quasiconformal maps are BQS, as shown by Example~\ref{example:comb}. On the other hand,
BQS homeomorphisms need not be geometrically quasiconformal, as the next example shows.

\begin{ex}
Let $X=\R^n$ be equipped with the Euclidean metric and Lebesgue measure, and $Y=\R^n$ be equipped with the
metric $d_Y$ given by $d_Y(x,y)=\sqrt{\Vert x-y\Vert}$. Then $X$ has a positive modulus worth of families of
non-constant curves, while $Y$ has none. Therefore the natural identification map $f:X\to Y$ is not
geometrically quasiconformal; however, it is not difficult to see that $f$ is a BQS homeomorphism
and a quasisymmetric map.
\end{ex}

In this section we will describe notions of Ahlfors regularity and Poincar\'e inequality that 
are needed in the final section of this paper where geometric quasiconformal maps are 
studied.

\begin{remark}
Note that $\Omega$ is a domain in $X$, and hence it inherits the metric $d_X$ from $X$. However, $\Omega$ also has other
induced metrics as well, for example the Mazurkiewicz metric $\dum$ as  in Definition~\ref{def:Mazurk}. 
We denote the balls in $\Omega$ with 
respect to the Mazurkiewicz metric $\dum$, centered at $z\in\Omega$ and with radius $\rho>0$, by
$B_M(z,\rho)$. 
Since we assume
that $X$ is locally path connected, we know that the topology generated by $d_X$ in $\Omega$ and the topology generated by
$\dum$ are the same. To see this, observe that if $U\subset\Omega$ is open with respect to the metric 
$d_X$, then for each $x\in U$
there is some $r>0$ such that $B(x,2r)\subset U$. Then as $d_X\le \dum$, it follows that $B_M(x,r)\subset B(x,2r)\subset U$; that is, 
$U$ is open with respect to $\dum$. Next, suppose that $U\subset\Omega$ is open with respect to 
$\dum$, and let $x\in U$. Then there is some $\rho>0$ such that $B_M(x,3\rho)\subset U$. We can also
choose $\rho$ small enough so that $B(x,3\rho)\subset\Omega$. Then by the local path-connectedness
of $X$, there is an open set $W\subset B(x,\rho)$ with $x\in W$ such that $W$ is path-connected.
So for each $y\in W$ there is a path (and hence a continuum) $K\subset W$ with $x,y\in K$.
Note that then $\dum(x,y)\le \diam(K)\le 2\rho<3\rho$, that is, $y\in B_M(x,3\rho)\subset U$.
Therefore $W\subset U$ with $x\in W$ and $W$ open with respect to the metric $d_X$. It follows that
$U$ is open with respect to $d_X$ as well.
\end{remark}

From the above remark, given an interval $I\subset\R$, a map $\gamma:I\to\Omega$
is a path with respect to $d_X$ if and only if it is a path with respect to $\dum$.

\begin{defn}[Upper gradients]
Let $Z$ be a metric space and $d_\Omega$ a metric on $\Omega$.
Given a function $f:\Omega\to Z$, we say that a non-negative Borel function $g$ is an upper gradient of $f$ in $\Omega$
with respect to the metric $d_\Omega$ 
if whenever $\gamma$ is a non-constant compact rectifiable curve in $\Omega$, we have
\[
d_Z(f(x),f(y))\le \int_\gamma g\, ds,
\]
where $x,y$ denote the end points of $\gamma$ and the integral on the right-hand side is taken with respect to the arc-length 
on $\gamma$.
\end{defn}

\begin{lemma}
If $\gamma$ is a curve in $\Omega$, then $\gamma$ is rectifiable with respect to $\dum$ if and only if it is rectifiable with
respect to $d_X$; moreover, $\ell_{d_X}(\gamma)=\ell_{\dum}(\gamma)$. Furthermore, if $g$ is a non-negative Borel measurable
function on $\Omega$, then $g$ is an upper gradient of a function $u$ with respect to the metric $d_X$ if and only if it is
an upper gradient of $u$ with respect to $\dum$. Also, given a family $\Gamma$ of paths in $\Omega$ and $1\le p<\infty$,
the quantity $\Mod_p(\Gamma)$ is the same with respect to $d_X$ and with respect to $\dum$.
\end{lemma}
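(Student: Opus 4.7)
The plan is to prove the length equality first, after which the other two statements follow almost formally. The two ingredients are the pointwise estimate $d_X(x,y)\le \dum(x,y)$ on $\Omega$, which is immediate from Definition~\ref{def:Mazurk} (any continuum $E\subset\Omega$ joining $x$ and $y$ satisfies $d_X(x,y)\le \diam(E)$), together with the observation that for any curve $\gamma:[a,b]\to\Omega$ and any subinterval $[s,t]\subset[a,b]$, the image $\gamma([s,t])$ is itself a continuum in $\Omega$ containing $\gamma(s)$ and $\gamma(t)$, being the continuous image of a compact interval.

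For the equality of lengths, the inequality $\ell_{d_X}(\gamma)\le \ell_{\dum}(\gamma)$ is immediate from $d_X\le \dum$ applied termwise to any variation sum. For the reverse inequality, given any partition $a=t_0<t_1<\cdots<t_n=b$ I would estimate
\[
\dum(\gamma(t_{i-1}),\gamma(t_i))\le \diam(\gamma([t_{i-1},t_i]))\le \ell_{d_X}\bigl(\gamma|_{[t_{i-1},t_i]}\bigr),
\]
where the first inequality is the subarc observation above and the second is the standard fact that a rectifiable curve has $d_X$-diameter bounded by its $d_X$-length. Summing in $i$ and taking a supremum over partitions yields $\ell_{\dum}(\gamma)\le \ell_{d_X}(\gamma)$. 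In particular the two notions of rectifiability agree, and when one (hence both) length is finite they coincide.

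Once the two length functions $s\mapsto \ell(\gamma|_{[a,s]})$ are seen to agree, the arc-length reparametrization of $\gamma$ is a single well-defined map $\tilde{\gamma}:[0,\ell(\gamma)]\to\Omega$, and the line integral $\int_\gamma g\, ds=\int_0^{\ell(\gamma)} g(\tilde{\gamma}(\tau))\, d\tau$ has the same value regardless of which of the two metrics is used. Since the upper gradient condition $d_Z(f(x),f(y))\le \int_\gamma g\, ds$ involves only this integral and the unchanged target metric $d_Z$, the two notions of upper gradient coincide. For the $p$-modulus, the class of admissible $\rho$ is determined entirely by the values $\int_\gamma \rho\, ds$ for $\gamma\in\Gamma$ and so is the same in both metrics, while the measure $\mu_X$ is unchanged; therefore $\Mod_p(\Gamma)$ agrees in the two metrics as well.

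The only point one needs to be careful about is the consistency of the word ``curve'': a map $\gamma:I\to\Omega$ is continuous with respect to $d_X$ if and only if it is continuous with respect to $\dum$, which is guaranteed by the preceding remark that $d_X$ and $\dum$ generate the same topology on $\Omega$. With this in hand, no real obstacle arises; the proof is essentially the two-line sandwich inequality above, and everything else is bookkeeping.
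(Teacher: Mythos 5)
Your proposal is correct and follows essentially the same route as the paper: the key step in both is the sandwich estimate $\dum(\gamma(t_{i-1}),\gamma(t_i))\le \diam(\gamma([t_{i-1},t_i]))\le \ell_{d_X}(\gamma|_{[t_{i-1},t_i]})$, summed over a partition, combined with the trivial inequality from $d_X\le \dum$. Your additional remarks on arc-length reparametrization and the topological consistency of the notion of curve simply spell out what the paper compresses into ``the rest of the claims now follow directly from this identity.''
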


\begin{proof}
It suffices to prove the lemma for non-constant paths $\gamma:I\to \Omega$ with $I$ a compact interval in $\R$.
Since $d_X\le \dum$, it follows that $\ell_{d_X}(\gamma)\le \ell_{\dum}(\gamma)$ (whether $\gamma$ is rectifiable
or not, this holds). To show that converse inequality, let $t_0<t_1<\cdots<t_m$ be a partition of the
domain interval $I\subset\R$ of $\gamma$. Then for $j=0,\cdots, m-1$, note that
\[
\dum(\gamma(t_j),\gamma(t_{j+1}))\le \diam(\gamma\vert_{[t_j,t_{j+1}]})
\le \ell_{d_X}(\gamma\vert_{[t_j,t_{j+1}]}).
\]
It follows that
\[
\sum_{j=0}^{m-1}\dum(\gamma(t_j),\gamma(t_{j+1}))\le \ell_{d_X}(\gamma).
\]
Taking the supremum over all such partitions of $I$ gives $\ell_{\dum}(\gamma)\le \ell_{d_X}(\gamma)$.
Therefore, whether $\gamma$ is rectifiable or not (with respect to either of the two metrics), we have
$\ell_{d_X}(\gamma)=\ell_{\dum}(\gamma)$. The rest of the claims of the lemma now follow directly from
this identity.
\end{proof}

Given the above lemma, the notions of Moduli of path families and geometric quasiconformality of mappings can be taken
with respect to either of $d_X$ or $\dum$ (and respectively on the image side in the case of geometric quasiconformal
mappings). On the other hand, the notions of Poincar\'e inequality and Ahlfors regularity differ in general based on whether
we consider the metric $d_X$ or the metric $\dum$, for the balls are different with respect to these two metrics.

\begin{defn}[Poincar\'e inequality]
We say that $\Omega$ supports a $p$-Poincar\'e inequality with respect to the metric $d_\Omega$
if there are constants $C>0$ and $\lambda\ge 1$ such that whenever
$u:\Omega\to\R$ has $g$ as an upper gradient on $\Omega$ with respect to the metric $d_\Omega$
and whenever $x\in\Omega$ and $r>0$, we have
\[
\inf_{c\in\R}\frac{1}{\mu(B_\Omega)}\int_B|u-c|\, d\mu\le C\, r\, \left(\frac{1}{\mu(\lambda B_\Omega)}\int_{\lambda B}g^p\, d\mu\right)^{1/p},
\]
where $B_\Omega:=\{y\in\Omega\, :\, d_\Omega(x,y)<r\}$ and 
$\lambda B_\Omega:=\{y\in\Omega\, :\, d_\Omega(x,y)<\lambda r\}$.
\end{defn}

\begin{defn}[Ahlfors regularity]
We say that a measure $\mu$ is doubling on $\Omega$ with respect to a metric $d_\Omega$ if $\mu$ is Borel regular and
there is a constant $C\ge 1$ such that for each $x\in \Omega$ and $r>0$,
\[
\mu(B_\Omega(x,2r))\le C\, \mu(B_\Omega(x,r)).
\]
We say that $\mu$ is Ahlfors $Q$-regular (with respect to the metric $d_\Omega$) 
for some $Q>0$ if there is a constant $C\ge 1$ such that whenever $x\in\Omega$ and
$0<r<2\diam(\Omega)$ (with diameter taken with respect to the metric $d_\Omega$) if
\[
\frac{r^Q}{C}\le \mu(B_\Omega(x,r))\le C\, r^Q.
\]
\end{defn}

From the seminal work of Heinonen and Koskela~\cite[Theorem~5.7]{HeiK} we 
know that if a complete metric measure space $(X,d,\mu)$ is
Ahlfors $Q$-regular (with some $Q>1$), then it supports a $Q$-Poincar\'e inequality if and only
if it satisfies a strong version of the $Q$-Loewner property. 

\begin{defn}
We say that $(X,d,\mu)$ satisfies a \emph{$Q$-Loewner property} if there is a function
$\phi:(0,\infty)\to(0,\infty)$ such that whenever $E,F\subset X$ are two disjoint continua
(that is, connected compact sets with at least two points),
\[
\Mod_Q(\Gamma(E,F,X)\ge \phi(\Delta(E,F))
\]
where $\Gamma(E,F,X)$ is the collection of all curves in $X$ with one end point in $E$ and the
other in $F$, and 
\[
\Delta(E,F):=\frac{\dist(E,F)}{\min\{\diam(E),\diam(F)\}}
\]
is the relative distance between $E$ and $F$. The strong version of the $Q$-Loewner property 
is that 
\[
\liminf_{t\to 0^+}\phi(t)=\infty.
\]
\end{defn}

\begin{thm}[{\cite[Theorem~3.6]{HeiK}}]\label{thm:HeiK}
If $(X,d,\mu)$ is $Q$-Loewner for some $Q>1$ and $\mu$ is Ahlfors $Q$-regular, then we can 
take $\phi(t)=C\, \max\{\log(1/t), |\log(t)|^{1-Q}\}$ for some constant $C\ge 1$.
\end{thm}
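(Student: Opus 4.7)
My plan is to prove the claimed lower bound on the Loewner function by treating the regimes of small and large $t$ separately, since the two terms in the maximum arise from distinct geometric mechanisms. Throughout, let $E$ and $F$ be disjoint continua in $X$ with $\Delta(E,F)=t$, assume without loss of generality that $\diam(E)\le \diam(F)$, and set $d=\dist(E,F)$, $D=\diam(E)$ (so $t=d/D$) and fix $x_0\in E$.

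For the large-$t$ regime I would show that the Loewner function can be taken $\ge C|\log t|^{1-Q}$ via a chaining and series-rule argument. The idea is to build a chain of continua $E=G_0, G_1,\ldots, G_N = F$ with $N$ of order $\log t$ such that each consecutive pair has $\Delta(G_i,G_{i+1})\le 1$. The hypothesized $Q$-Loewner property then gives $\Mod_Q(\Gamma(G_i,G_{i+1},X))\ge \phi(1)=:c_0>0$ for every $i$. Since each curve in $\Gamma(E,F,X)$ can be subdivided into consecutive sub-curves, with the $i$-th sub-curve lying in $\Gamma(G_i,G_{i+1},X)$ and the families $\Gamma(G_i,G_{i+1},X)$ supported in essentially disjoint regions, the standard series inequality for $Q$-modulus gives
\[
\Mod_Q(\Gamma(E,F,X))^{1/(1-Q)} \le \sum_{i=0}^{N-1}\Mod_Q(\Gamma(G_i,G_{i+1},X))^{1/(1-Q)} \le N\, c_0^{1/(1-Q)},
\]
and inverting (using $1/(1-Q)<0$) yields $\Mod_Q(\Gamma(E,F,X))\ge c_0\, N^{-(Q-1)}$, of the desired order $|\log t|^{1-Q}$. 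Constructing the chain uses Ahlfors $Q$-regularity to select a coarse sequence of points $y_0\in E,\ldots,y_N\in F$ at controlled pairwise spacings along a continuum joining $E$ to $F$, and taking each $G_i$ to be a ball of appropriate radius around $y_i$, regarded as a continuum.

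For the small-$t$ regime I would show that the Loewner function can be taken $\ge C\log(1/t)$, exploiting that as $E$ and $F$ get closer, any admissible function $\rho$ for $\Gamma(E,F,X)$ must do work on each of many dyadic scales. Consider the dyadic annuli $A_k = B(x_0, 2^{k+1}d)\setminus \overline{B(x_0, 2^k d)}$ for $k=0,\ldots,K$ with $K$ of order $\log_2(1/t)$. For each such $k$ the connected component $E^{(k)}$ of $E\cap \overline{B(x_0, 2^k d)}$ containing $x_0$ is a sub-continuum of diameter comparable to $2^k d$, and the Loewner hypothesis together with Ahlfors $Q$-regularity implies that the $L^Q$-mass of $\rho$ on each annulus $A_k$ is bounded below by a positive constant depending only on $Q$ and the Loewner data. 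Summing these contributions over the $K$ disjoint annuli produces $\int_X\rho^Q\,d\mu\ge cK$, giving the desired $\log(1/t)$ bound.

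The principal obstacle is making this small-$t$ argument rigorous: the usual subadditivity of modulus runs in the wrong direction for a direct additive lower bound, so one must extract the per-annulus contribution by potential-theoretic means, for example using the $(1,Q)$-Poincar\'e inequality that is known to accompany the $Q$-Loewner property on Ahlfors $Q$-regular spaces in order to control oscillations of truncated potentials on each shell. Once both regimes are established, combining them is routine: take $C$ to be the minimum of the two arising constants, and for $t$ in any compact subinterval of $(0,\infty)$ the bound $\phi(t)\ge \phi(1)$ handles the transitional range. The resulting $\phi(t) = C \max\{\log(1/t),\,|\log(t)|^{1-Q}\}$ then serves as a valid Loewner function across all $t$.
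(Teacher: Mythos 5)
First, a remark on scope: the paper does not prove this statement at all; it is quoted verbatim from Heinonen--Koskela (their Theorem~3.6), so there is no internal proof to compare yours against. Judged on its own terms, your proposal has a genuine gap in the large-$t$ regime. The serial rule for modulus that you invoke runs in the opposite direction from the way you use it: for separate families $\Gamma_i$, each of which minorizes $\Gamma$ (every curve of $\Gamma$ contains a subcurve in each $\Gamma_i$), the correct statement is
\[
\Mod_Q(\Gamma)^{1/(1-Q)}\ \ge\ \sum_i \Mod_Q(\Gamma_i)^{1/(1-Q)},
\]
equivalently $\Mod_Q(\Gamma)\le\bigl(\sum_i\Mod_Q(\Gamma_i)^{1/(1-Q)}\bigr)^{1-Q}$; it is proved by noting that any convex combination $\sum_i\lambda_i\rho_i$ of admissible functions for the $\Gamma_i$ is admissible for $\Gamma$, so the mechanism can only ever produce \emph{upper} bounds on $\Mod_Q(\Gamma)$. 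Writing the inequality with ``$\le$'' and then feeding in the lower bounds $\Mod_Q(\Gamma(G_i,G_{i+1},X))\ge c_0$ is therefore not legitimate, and $\Mod_Q(\Gamma(E,F,X))\ge c_0N^{1-Q}$ does not follow. There is a second, independent failure: the hypothesis of the serial rule is violated by your configuration, since a curve from $E$ to $F$ has no obligation to pass near the intermediate continua $G_1,\dots,G_{N-1}$ and hence need not contain any subcurve of $\Gamma(G_i,G_{i+1},X)$. The standard route to the $(\log t)^{1-Q}$ lower bound is genuinely different: given an admissible $\rho$, form the potential $u(x)=\min\{1,\inf_\gamma\int_\gamma\rho\,ds\}$ (infimum over curves from $E$ to $x$), for which $\rho$ is an upper gradient, and telescope the $Q$-Poincar\'e inequality (which accompanies the Loewner property on Ahlfors $Q$-regular spaces, as the paper recalls from Heinonen--Koskela) along a chain of $N\approx\log t$ balls of geometrically growing radii joining $E$ to $F$; H\"older's inequality and $Q$-regularity then give $1\lesssim N^{(Q-1)/Q}\bigl(\int_X\rho^Q\,d\mu\bigr)^{1/Q}$, which is the desired bound.

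Your small-$t$ argument is the standard one in outline, but the step you yourself flag as the ``principal obstacle'' is exactly the step that constitutes the proof: to convert the Loewner lower bound for $\Gamma(E^{(k)},F^{(k)},X)$ into a lower bound for $\int_{A_k}\rho^Q\,d\mu$, you must know that a definite amount of modulus is carried by curves that stay inside a bounded enlargement of the annulus $A_k$, since only for such a localized family is $\rho\chi_{A_k}$ admissible. This localization of the Loewner estimate does hold in Ahlfors $Q$-regular Loewner spaces, but it is itself a lemma requiring proof (via quasiconvexity and linear local connectivity, which must first be derived from the hypotheses); gesturing at ``potential-theoretic means'' does not close it. A minor further point: metric balls need not be connected, so taking the $G_i$ to be balls ``regarded as continua'' also presupposes quasiconvexity.
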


\section{Extending BQS homeomorphisms between bounded domains to prime end closures}

The principal focus of this section and the next section is to obtain a Carath\'eodory-type
extension of BQS homeomorphisms between two domains. In this section we will focus on 
bounded domains, and in the next section we will study unbounded domains. Thus in this section
both $\Omega$ and $\Omega^\prime$ are bounded domains in proper, locally connected metric spaces.
Theorem~\ref{thm:BQS-fp-homeo} and Proposition~\ref{prop:bdd-domain-ends-match} are the two
main results of this section. However, Proposition~\ref{prop:BQS-Mazur} is of independent interest
as it demonstrates that BQS property and  geometric quasiconformality can be formulated 
equivalently with respect to $d_X, d_Y$ or with respect to $\dum,\dumm$; moreover, this proposition holds also for unbounded domains. 

To understand how BQS homeomorphisms transform prime ends, 
we first need the following two lemmata.

\begin{lemma}\label{lem:bdd-to-bdd}
Let $X$ and $Y$ be complete, doubling, locally path-connected metric spaces with $\Omega\subset X$,
$\Omega^\prime\subset Y$ two domains (not necessarily bounded). 
If there is a BQS homeomorphism $f:\Omega\to\Omega^\prime$, then
$\Omega$ is bounded if and only if $\Omega^\prime$ is bounded.
Moreover, if $A\subset\Omega$ with $\diam_M(A)<\infty$, then $\diam_M^\prime(f(A))<\infty$.
\end{lemma}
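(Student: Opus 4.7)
The plan is to prove both assertions by pitting the BQS condition against a single well-chosen reference continuum. Since $X$ is locally path-connected and $\Omega\subset X$ is a connected open set, $\Omega$ is path-connected. I would fix a basepoint $x_0\in\Omega$ together with a nondegenerate continuum $F\subset\Omega$ containing $x_0$ (for example, the image of any short non-constant path in $\Omega$ starting at $x_0$). Because $f$ is a homeomorphism, $F^\prime:=f(F)$ is a nondegenerate continuum in $\Omega^\prime$, so in particular $\diam(F^\prime)>0$. The BQS gauge $\eta$ will be used in two directions: as an upper bound on $\diam(f(E))/\diam(f(F))$ when $\diam(E)/\diam(F)$ is bounded, and (crucially) as an upper bound on $\diam(f(F))/\diam(f(E))$ when $\diam(F)/\diam(E)$ tends to $0$.

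For the direction $\Omega$ bounded $\Rightarrow$ $\Omega^\prime$ bounded, I would use path-connectedness of $\Omega$ to associate to each $x\in\Omega$ a continuum $K_x\subset\Omega$ with $x_0,x\in K_x$. Then $K_x\cap F\supset\{x_0\}\ne\emptyset$ and $\diam(K_x)\le\diam(\Omega)<\infty$, so BQS gives
\[
\diam(f(K_x))\le \eta\left(\frac{\diam(\Omega)}{\diam(F)}\right)\diam(F^\prime)=:C,
\]
a constant independent of $x$. Since $f(x_0),f(x)\in f(K_x)$, this yields $d_Y(f(x_0),f(x))\le C$ uniformly in $x$, so $\Omega^\prime=f(\Omega)\subset B_Y(f(x_0),C)$ is bounded.

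The converse direction is not symmetric to the first, because a BQS homeomorphism is not known to be BQS in the reverse direction. My plan is an indirect argument: assume $\Omega^\prime$ is bounded but $\Omega$ is not, pick $x_n\in\Omega$ with $d_X(x_0,x_n)\to\infty$, and by path-connectedness choose continua $K_n\subset\Omega$ containing $x_0$ and $x_n$, so $\diam(K_n)\ge d_X(x_0,x_n)\to\infty$. Now I apply BQS with the roles swapped, taking the small continuum $F$ and the large continuum $K_n$, to obtain
\[
\frac{\diam(F^\prime)}{\diam(f(K_n))}\le \eta\left(\frac{\diam(F)}{\diam(K_n)}\right).
\]
Since the argument of $\eta$ tends to $0$ and $\eta(0^+)=0$, while $\diam(F^\prime)>0$ is fixed, this forces $\diam(f(K_n))\to\infty$, contradicting the boundedness of $\Omega^\prime$. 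This reversal trick—reading the BQS inequality in the \emph{other} direction and exploiting $\eta(0^+)=0$—is the main obstacle and the only conceptually delicate step.

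For the last claim, suppose $A\subset\Omega$ with $D:=\diam_M(A)<\infty$; the case $|A|\le 1$ is trivial, so assume $|A|\ge 2$ and fix $x_0\in A$. For each $y\in A$, the definition of the Mazurkiewicz metric supplies a continuum $E_y\subset\Omega$ containing $x_0$ and $y$ with $\diam(E_y)\le \dum(x_0,y)+1\le D+1$. The same reference-continuum estimate as in the first part, applied to $E_y$ and $F$, yields $\diam(f(E_y))\le C_0$ for a constant $C_0$ independent of $y$. Hence $\dumm(f(x_0),f(y))\le\diam(f(E_y))\le C_0$, and the triangle inequality for $\dumm$ gives $\diam_M^\prime(f(A))\le 2C_0<\infty$.
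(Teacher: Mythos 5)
Your argument is correct, and for the forward implication and the final claim it is essentially the paper's strategy: pin the continuum of interest against a fixed reference continuum via the BQS inequality and pass to diameters (the paper uses a large continuum $\Gamma$ with $\diam(\Gamma)\ge\diam(\Omega)/2$ and two applications of BQS through an auxiliary connector $\Gamma_1$, where you get away with a single application against an arbitrary nondegenerate $F$; for the Mazurkiewicz claim the paper likewise routes everything through a continuum meeting $A$ and finishes with the triangle inequality for $\dumm$, just as you do). The genuine divergence is the converse implication. The paper dismisses it in one line -- ``it suffices to show that if $\Omega$ is bounded then $\Omega'$ is bounded'' -- which tacitly uses that the inverse of a BQS homeomorphism is again BQS, a fact the authors invoke explicitly later in the proof of Theorem~\ref{thm:BQS-fp-homeo}. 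Your assertion that this inverse property ``is not known'' is not accurate: from a monotone increasing $\eta$ with $\eta(0^+)=0$ one can always manufacture an inverse gauge, e.g.\ $\eta'(t)=1/\inf\{s>0:\eta(s)\ge 1/t\}$. Nevertheless your workaround is valid and arguably cleaner: reading the BQS inequality with the small continuum on top and letting $\eta(0^+)=0$ force $\diam(f(K_n))\to\infty$ is a self-contained contradiction that never needs the inverse map to be BQS. One small repair in the last claim: you re-fix $x_0\in A$ but then apply the BQS inequality to the pair $(E_y,F)$, so you must also re-choose the reference continuum $F$ to pass through this new $x_0$ (or through some other point lying on every $E_y$), since the BQS inequality requires the two continua to intersect. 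With that one-line adjustment the proof is complete.
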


\begin{proof}
It suffices to show that if $\Omega$ is bounded, then $\Omega^\prime$ is also bounded.
We fix a continuum $\Gamma\subset\Omega$ such that $\diam(\Gamma)\ge \diam(\Omega)/2$, 
and let $\tau:=\diam(\Gamma)$, $s:=\diam(f(\Gamma))$.
Note that $0<s\tau<\infty$.
Let $z,w\in\Omega^\prime$. Then as $\Omega$ is locally path-connected, it follows that there
is a path (and hence a continuum) $\Lambda$ connecting $f^{-1}(z)$ to $f^{-1}(w)$.
The set $f(\Lambda)$ is a continuum in $\Omega^\prime$ with $z,w\in f(\Lambda)$. 
Let $\Gamma_1$ be the continuum obtained by connecting $\Gamma$ to $\Lambda$ in $\Omega$; then
$\diam(\Gamma_1)\ge \tau$ and so by the
BQS property of $f$ we now have
\[
\frac{\diam(f(\Lambda))}{\diam(f(\Gamma_1))}\le \eta\left(\frac{\diam(\Lambda)}{\tau}\right)
  \le \eta\left(\frac{\diam(\Omega)}{\tau}\right).
\]
Moreover, as $\Gamma$ and $\Gamma_1$ intersect with 
\[
\diam(\Omega)/2\le \diam(\Gamma)\le \diam(\Gamma_1)\le \diam(\Omega),
\]
we know from the BQS property of $f$ that
\[
\frac{\diam(f(\Gamma_1))}{s}\le \eta\left(\frac{\diam(\Omega)}{\diam(\Omega)/2}\right).
\]
It follows that 
\[
d_Y(z,w)\le \diam(f(\Lambda))
\le \diam(f(\Gamma_1))\, \eta\left(\frac{\diam(\Omega)}{\tau}\right)
 \le s\, \eta(2)\, \eta\left(\frac{\diam(\Omega)}{\tau}\right)<\infty,
\]
and so $\Omega^\prime$ is also bounded.

Now suppose that $A\subset\Omega$ with $\diam_M(A)<\infty$. If $A$ has only one point, then
there is nothing to prove. Hence assume that $0<\diam_M(A)$; then we can find a continuum
$\Gamma\subset\Omega$ intersecting $A$, such that 
\[
\diam_M(A)/2\le \diam(\Gamma)=\diam_M(\Gamma)\le 2\diam_M(A).
\]
Let $z,w\in f(A)$; then we can find a path $\beta\subset\Omega$ with end points $f^{-1}(z), f^{-1}(w)$
such that $\diam(\beta)\le 2\diam_M(A)$.
We can also find a continuum $\gamma\subset\Omega$ with $f^{-1}(w)$ and a point from
$A\cap\Gamma$ contained in $\gamma$ and $\diam(\gamma)\le 2\diam_M(A)$ 
(note that $f^{-1}(w)\in A\cap\beta$).
It follows from the BQS property of $f$ that
\[
\frac{\diam(f(\beta))}{\diam(f(\gamma\cup \Gamma))}
\le \eta\left(\frac{\diam(\beta)}{\diam(\gamma\cup \Gamma)}\right)
  \le \eta\left(\frac{2\diam_M(\Gamma)}{\diam(f(\Gamma))}\right).
\]
Applying the BQS property of $f$ to the two intersecting continua $\gamma\cup \Gamma$ and $\Gamma$ gives
\[
\frac{\diam(f(\gamma\cup \Gamma))}{\diam(f(\Gamma))}
\le \eta\left(\frac{\diam(\gamma\cup \Gamma)}{\diam(\Gamma)}\right)
 \le \eta(8).
\] 
It follows that
\[
\dumm(z,w)
\le \diam(f(\beta))\le \eta(8)\, \eta\left(\frac{2\diam_M(\Gamma)}{\diam(f(\Gamma))}\right).
\]
It follows that
\[
\diam_M^\prime(f(A))
\le \diam(f(\beta))\le \eta(8)\, \eta\left(\frac{2\diam_M(\Gamma)}{\diam(f(\Gamma))}\right)<\infty
\]
as desired.
\end{proof}

\begin{lemma}\label{lem:maz-dist-bdry-positive}
Let $X$ and $Y$ be complete, doubling, locally path-connected metric spaces with $\Omega\subset X$,
$\Omega^\prime\subset Y$ two domains (not necessarily bounded) and let $f:\Omega\to\Omega^\prime$ be
a BQS homeomorphism. Suppose that $E,F\subset\Omega$ are two
non-empty bounded connected open sets with $E\subset F$ such that
\begin{equation}\label{eq:pos-mzr}
\dM(\Omega\cap\partial E,\Omega\cap\partial F)>0.
\end{equation}
Then
\[
\dM^\prime(\Omega^\prime\cap\partial f(E),\Omega^\prime\cap\partial f(F))>0.
\]
\end{lemma}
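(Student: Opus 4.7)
The plan is to argue by contradiction. Suppose $\dM^\prime(\Omega^\prime \cap \partial f(E), \Omega^\prime \cap \partial f(F)) = 0$; then there exist continua $\gamma_n \subset \Omega^\prime$ with $\diam(\gamma_n)\to 0$, each containing a point $p_n \in \Omega^\prime \cap \partial f(E)$ and a point $q_n \in \Omega^\prime \cap \partial f(F)$. Pulling back, $\beta_n := f^{-1}(\gamma_n)$ is a continuum in $\Omega$, and because $f$ is a homeomorphism $\Omega \to \Omega^\prime$, we have $x_n := f^{-1}(p_n) \in \Omega \cap \partial E$ and $y_n := f^{-1}(q_n) \in \Omega \cap \partial F$. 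Thus $\beta_n$ is a competitor in the infimum defining $\dM(\Omega \cap \partial E, \Omega \cap \partial F)$, and so $\diam(\beta_n) \ge \delta := \dM(\Omega \cap \partial E, \Omega \cap \partial F) > 0$ for every $n$.

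By Lemma~\ref{lem:bdd-to-bdd} the images $f(E)$ and $f(F)$ have finite $\dumm$-diameter, and hence finite $d_Y$-diameter, so $\{p_n\}$ and $\{q_n\}$ lie in a bounded subset of the proper space $Y$. Passing to a subsequence, $p_n \to p$, and since $d_Y(p_n,q_n) \le \diam(\gamma_n) \to 0$ also $q_n \to p$; similarly $x_n \to x \in \partial E$ and $y_n \to y \in \partial F$ in the proper space $X$. If $p \in \Omega^\prime$, then $f^{-1}$ is continuous at $p$, and the shrinking $\gamma_n$ force $\beta_n$ into arbitrarily small $d_X$-neighborhoods of $f^{-1}(p)$, contradicting $\diam(\beta_n) \ge \delta$. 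Hence we may assume $p \in \partial \Omega^\prime$; then necessarily $x \in \partial\Omega$, since $x \in \Omega$ would give $p = \lim_n f(x_n) = f(x) \in \Omega^\prime$ by continuity.

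In this genuine boundary case, I plan to extract the contradiction from the BQS inequality by constructing, for each $n$, a reference continuum $\alpha_n \subset \Omega$ that (i) meets $\beta_n$ at the point $x_n$, (ii) contains a fixed point $e \in E$, and (iii) has $\diam(\alpha_n) \le C$ for a constant $C$ independent of $n$. Granting this, both $\alpha_n$ and $\beta_n$ are nondegenerate continua in $\Omega$ with $\alpha_n \cap \beta_n \ne \emptyset$, so the BQS condition yields
\[
\frac{\diam(f(\alpha_n))}{\diam(f(\beta_n))} \le \eta\!\left(\frac{\diam(\alpha_n)}{\diam(\beta_n)}\right) \le \eta(C/\delta).
\]
Since $f(\alpha_n)$ contains both $f(e) \in \Omega^\prime$ and $p_n$, and $p_n \to p \in \partial \Omega^\prime$, we have $\diam(f(\alpha_n)) \ge d_Y(f(e),p_n) \to d_Y(f(e),p) > 0$. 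Consequently $\diam(f(\beta_n)) = \diam(\gamma_n)$ is bounded below by a positive constant for large $n$, contradicting $\diam(\gamma_n) \to 0$.

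The main obstacle is producing the uniformly bounded reference continuum $\alpha_n$, and it rests on the observation that because $E$ is bounded, open, connected and $X$ is locally path-connected, $E$ is itself path-connected and every path in $E$ has image of $d_X$-diameter at most $\diam(E)$. To build $\alpha_n$, fix $e \in E$; since $x_n \in \partial E$, choose $v_n \in E$ with $d_X(v_n,x_n) < 1/n$, take a path $\pi_n$ in $E$ from $e$ to $v_n$ (so $\diam(\pi_n) \le \diam(E)$), and join it to a short path $\rho_n$ from $v_n$ to $x_n$ inside a path-connected $\Omega$-neighborhood of $x_n$ of diameter less than $1/n$ (available by local path-connectedness of $X$). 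Then $\alpha_n := \pi_n \cup \rho_n$ is a continuum in $\Omega$ containing both $e$ and $x_n$, with $\diam(\alpha_n) \le \diam(\pi_n) + \diam(\rho_n) \le \diam(E) + 1 =: C$, which is precisely the uniform bound required.
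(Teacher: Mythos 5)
Your proof is correct, but it takes a genuinely different route from the paper's. The paper argues directly: it fixes a single nondegenerate continuum $A\subset F$, notes that any competitor continuum $\gamma$ for $\dM^\prime(\Omega^\prime\cap\partial f(E),\Omega^\prime\cap\partial f(F))$ pulls back to a continuum $f^{-1}(\gamma)$ of diameter at least $\tau=\dM(\Omega\cap\partial E,\Omega\cap\partial F)$ which must enter $F$ (since the hypothesis forces $\Omega\cap\partial E\subset F$), bridges $A$ to $f^{-1}(\gamma)$ inside $F$ by a continuum of diameter at most $2\diam(F)$, and applies the BQS inequality once to obtain the explicit uniform lower bound $\diam(\gamma)\ge \diam(f(A))/\eta(4\diam(F)/\tau)$. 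You instead argue by contradiction with a sequence $\gamma_n$, extract convergent subsequences using properness of $X$ and $Y$, and anchor a per-$n$ reference continuum $\alpha_n$ at a fixed point $e\in E$ joined to $x_n\in\Omega\cap\partial E\cap\beta_n$; the uniform bound $\diam(\alpha_n)\le\diam(E)+1$ together with $\diam(\beta_n)\ge\delta$ then feeds the same BQS inequality. Both arguments are sound. The paper's version is quantitatively stronger (it produces an explicit positive lower bound, not just positivity) and needs neither properness nor the boundedness of $f(E)$ and $f(F)$ from Lemma~\ref{lem:bdd-to-bdd}, both of which your compactness step relies on. On the other hand, your construction sidesteps the small observation that $\Omega\cap\partial E\subset F$, and your case analysis at the limit point $p$ could be removed entirely by noting that $\diam(f(\alpha_n))\ge d_Y(f(e),p_n)\ge \dist(f(e),\Omega^\prime\cap\partial f(E))>0$ uniformly, since $f(E)$ is open in $Y$ and contains $f(e)$; this would also streamline the argument into a direct (non-sequential) one much closer in spirit to the paper's. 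One cosmetic point: choose the small path-connected neighborhood $U_n\ni x_n$ first and then pick $v_n\in U_n\cap E$ (nonempty because $x_n\in\partial E$), rather than picking $v_n$ before the neighborhood.
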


\begin{proof}
Suppose $\gamma$ is a continuum in $\Omega^\prime$ connecting $\Omega^\prime\cap\partial f(E),\Omega^\prime\cap\partial f(F)$.
Then because $f$ is a homeomorphism, $f^{-1}(\gamma)$ is a continuum in $\Omega$ connecting
$\Omega\cap\partial E,\Omega\cap\partial F$, and hence
\[
\diam(f^{-1}(\gamma)\ge \dM(\Omega\cap\partial E,\Omega\cap\partial F):=\tau>0.
\]
Let $A\subset\Omega$ be a continuum in $F$ with $2\diam(F)\ge \diam(A)$; as $X$ is locally path connected
and $F$ is a non-empty bounded connected open set, such a continuum exists. 
As $f^{-1}\gamma$ intersects $F$ (for by the assumption~\eqref{eq:pos-mzr},
we know that $\Omega\cap\partial E\subset F$), we can find a continuum $\beta\subset F$ intersecting both 
$A$ and $f^{-1}(\gamma)\cap F$ such that $\diam(\beta)\le 2\diam(F)$. 
By applying the BQS property of $f$ to the two intersecting continua $A\cup\beta$ and $f^{-1}(\gamma)$, we have
\[
\frac{\diam(f(A\cup\beta))}{\diam(\gamma)}\le \eta\left(\frac{\diam(A\cup\beta)}{\diam(f^{-1}(\gamma))}\right)
  \le \eta\left(\frac{4\diam(F)}{\tau}\right).
\]
As $\diam(f(A\cup\beta))\ge \diam(f(A))>0$, we see that
\[
\diam(\gamma)\ge \frac{\diam(f(A))}{\eta\left(\frac{4\diam(F)}{\tau}\right)}>0,
\]
that is,
\[
\dM^\prime(\Omega^\prime\cap\partial f(E),\Omega^\prime\cap\partial f(F))
 \ge \frac{\diam(f(A))}{\eta\left(\frac{4\diam(F)}{\tau}\right)}>0
 \]
 as desired.
\end{proof} 

\begin{lemma}\label{lem:BQS-single-to-single}
Let $X$ and $Y$ be complete, doubling, locally path-connected metric spaces with $\Omega\subset X$,
$\Omega^\prime\subset Y$ two domains (not necessarily bounded) and let $f:\Omega\to\Omega^\prime$ be
a BQS homeomorphism. If $\{E_k\}$ is a chain of $\Omega$ with singleton impression, then
$\bigcap_k\overline{f(E_k)}$ has only one point.
\end{lemma}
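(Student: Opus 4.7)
The plan is to argue by contradiction: assume that $y_1,y_2\in\bigcap_k\overline{f(E_k)}$ are distinct, and set $\delta:=d_Y(y_1,y_2)>0$. I first replace $\{E_k\}$ by an equivalent open chain via Lemma~\ref{lem:open-end}; by the nesting $F_{k+1}\subset E_k\subset F_k$ this replacement changes neither the impression $\{x_0\}=\bigcap_k\overline{E_k}$ nor the intersection $\bigcap_k\overline{f(E_k)}$, and makes each $f(E_k)$ an open connected (hence path-connected) subset of $\Omega^\prime$. Since $\{\overline{E_k}\}$ is a nested sequence of compacta with singleton intersection, $\diam(E_k)\to 0$.

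Next, for each $k$ I pick $z_{j,k}\in f(E_k)$ with $d_Y(z_{j,k},y_j)<\delta/4$ and join them by a path $\sigma_k\subset f(E_k)$, yielding a non-degenerate continuum with $\diam(\sigma_k)\ge\delta/2$. Its preimage $\tau_k:=f^{-1}(\sigma_k)\subset E_k$ is a non-degenerate continuum in $\Omega$ with $\diam(\tau_k)\le \diam(E_k)\to 0$. The remainder of the proof forces a contradiction by applying the BQS property to $\tau_k$ sitting inside a carefully chosen enveloping continuum $\gamma_k$.

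To build this reference continuum, I fix once and for all distinct points $p_0\in\Omega$ and $q_0\in E_1$, together with a path $\Lambda\subset\Omega$ from $p_0$ to $q_0$. For each sufficiently large $k$ (so that $p_0\not\in\overline{E_k}$) I choose a path $\alpha_k\subset E_1$ from $q_0$ to some point of $\tau_k$; this is possible since $E_1$ is open, connected, and $X$ is locally path-connected. Set $\gamma_k:=\Lambda\cup\alpha_k\cup\tau_k$. Because the three pieces intersect pairwise, $\gamma_k$ is a continuum containing $\tau_k$. The containment $\alpha_k,\tau_k\subset E_1$ gives the uniform bound $\diam(\gamma_k)\le\diam(\Lambda)+2\diam(E_1)$, while for $k$ large, $\diam(\gamma_k)\ge d(p_0,x_0)/2>0$ (using $p_0\in\Omega$ open, so $d(p_0,x_0)\ge d(p_0,\partial\Omega)>0$). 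The crucial upper bound is on $\diam(f(\gamma_k))$: since paths inside the open connected set $E_1$ witness $\diam_M(E_1)\le\diam(E_1)<\infty$, Lemma~\ref{lem:bdd-to-bdd} supplies $\diam_M^\prime(f(E_1))<\infty$, from which
\[
\diam(f(\gamma_k))\le \diam(f(\Lambda))+2\,\diam_M^\prime(f(E_1))=:C^\prime<\infty
\]
uniformly in $k$.

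With these bounds in hand, the BQS condition applied to the non-degenerate intersecting continua $\tau_k\subset\gamma_k$ gives
\[
\tfrac{\delta}{2}\le \diam(\sigma_k)=\diam(f(\tau_k))\le \diam(f(\gamma_k))\,\eta\!\left(\frac{\diam(\tau_k)}{\diam(\gamma_k)}\right)\le C^\prime\,\eta\!\left(\frac{\diam(\tau_k)}{\diam(\gamma_k)}\right),
\]
and since $\diam(\tau_k)\to 0$ while $\diam(\gamma_k)$ stays bounded below, the right-hand side tends to $0$, a contradiction. The main obstacle is securing a uniform upper bound on $\diam(f(\gamma_k))$: a priori $f(E_1)$ can have infinite $d_Y$-diameter, and only by passing through the Mazurkiewicz metric—where the BQS property lives naturally—does Lemma~\ref{lem:bdd-to-bdd} deliver the control that makes the contradiction go through.
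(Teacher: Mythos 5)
Your proof is correct and rests on the same core idea as the paper's: compare the shrinking continuum inside $E_k$ against a fixed-size reference continuum in $E_1$ via the BQS inequality, so that $\eta(\diam(E_k)/\text{const})\to 0$ forces the image diameters to collapse. The paper runs this directly (showing $\diam(f(E_k))\to 0$) rather than by contradiction, and it controls the image-side denominator by a \emph{second} application of the BQS inequality to the intersecting continua $J\cup J_k$ and $J$, which bounds $\diam(f(J\cup J_k))$ by $\eta(2)\,\diam(f(J))$ without ever invoking Lemma~\ref{lem:bdd-to-bdd}; you instead fold everything into one enveloping continuum $\gamma_k$ and import the uniform bound on $\diam(f(\gamma_k))$ from the Mazurkiewicz-diameter statement of Lemma~\ref{lem:bdd-to-bdd}. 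Both routes are valid; yours trades the paper's self-contained double BQS estimate for a dependency on an earlier lemma. One small point worth tightening: your uniform bounds use $\diam(E_1)<\infty$ and the compactness of the $\overline{E_k}$, which in the unbounded-domain setting of the lemma only holds for a tail of the chain (a chain with singleton impression in a proper space is eventually bounded), so you should replace $E_1$ by $E_{k_0}$ for a suitable $k_0$ --- the paper's own proof glosses over the same point.
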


\begin{proof}
Suppose that 
and let $\{E_k\}$ is a chain with singleton impression. Then $\lim_k\diam(E_k)=0$. We fix a continuum $J\subset E_1$ with
$\diam(E_1)/2\le \diam(J)\le \diam(E_1)$, and let $k\in\N$. For each pair $z,w\in f(E_k)$ we can
find a continuum $\gamma\subset E_k$ containing $f^{-1}(z)$ and $f^{-1}(w)$ such that
$\diam(\gamma)\le \diam(E_k)$. Let $J_k$ be a continuum in $E_1$ that intersects both $J$ and $\gamma$;
note that $\diam(J_k\cup J)\le \diam E_1$ as well.
Then by the BQS property of $f$, we obtain
\[
\frac{\diam(f(\gamma))}{\diam(f(J\cup J_k))}
  \le \eta\left(\frac{\diam(\gamma)}{\diam(J\cup J_k)}\right)\le \eta\left(\frac{\diam(E_k)}{\diam(J)}\right).
\]
On the other hand, by applying the BQS property of $f$ to the two intersecting continua $J$ and
$J\cup J_k$, we see that
\[
\frac{\diam(f(J\cup J_k))}{\diam(f(J))}
  \le \eta\left(\frac{\diam(J\cup J_k)}{\diam(f(J))}\right)\le \eta(2).
\]
Therefore
\[
\diam(f(\gamma))\le \eta(2)\, \eta\left(\frac{\diam(E_k)}{\diam(J)}\right).
\]
It follows that 
\[
\diam(f(E_k))\le \eta(2)\, \eta\left(\frac{\diam(E_k)}{\diam(J)}\right)\to 0\text{ as }k\to\infty.
\]
Therefore $\bigcap_k\overline{f(E_k)}$ has at most one point, and as $\overline{f(E_1)}$ is compact
and $\emptyset\ne\overline{f(E_{k+1})}\subset\overline{f(E_k)}$, it follows that 
$\bigcap_k\overline{f(E_k)}$ has exactly one point.
\end{proof}

\begin{thm}\label{thm:BQS-fp-homeo}
Let $X$ and $Y$ be complete, doubling metric spaces that are locally path connected.  
Let $\Omega \subseteq X$ and $\Omega' \subseteq Y$ be 
bounded domains.  Let $f \colon \Omega \to \Omega'$ be a branched quasi-symmetric (BQS) homeomorphism.  Then, 
$f$ induces a homeomorphism $f_P \colon \pec{\Omega} \to \pec{\Omega'}$.
\end{thm}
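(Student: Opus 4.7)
The plan is to extend $f$ chain-by-chain: set $f_P\vert_\Omega=f$, and for a prime end $\ms{E}=[E_k]\in\partial_P\Omega$ define $f_P(\ms{E}):=[f(E_k)]$, taking each $E_k$ open via Lemma~\ref{lem:open-end} so that each $f(E_k)$ is open and connected. Verifying that $\{f(E_k)\}$ is itself a chain of $\Omega'$ is straightforward: each $f(E_k)$ is bounded in $Y$ by the second assertion of Lemma~\ref{lem:bdd-to-bdd} together with boundedness of $\Omega'$, so $\overline{f(E_k)}$ is compact by properness; both $\overline{f(E_k)}\cap\partial\Omega'\ne\emptyset$ and Condition~(c) of Definition~\ref{def:chains} follow from continuity of $f^{-1}$, since any subsequential limit $y$ in $Y$ of $\{f(x_n)\}$ with $x_n\in E_k$ and $x_n\to x\in\partial\Omega$ cannot lie in $\Omega'$ (else $f^{-1}(y)=x\in\Omega$); Condition~(a) is immediate from injectivity and Condition~(b) is exactly Lemma~\ref{lem:maz-dist-bdry-positive}. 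Well-definedness of $f_P$ on equivalence classes is immediate because $f$ is a bijection preserving inclusions.

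The heart of the proof is to show that $[f(E_k)]$ is a prime end whenever $[E_k]$ is. Suppose $\ms{G}=[G_j]$ is an end of $\Omega'$ dividing $[f(E_k)]$, with each $G_j$ open. My strategy is to show $\{f^{-1}(G_j)\}$ is a chain of $\Omega$: then the hypothesis $G_{j_k}\subset f(E_k)$ becomes $f^{-1}(G_{j_k})\subset E_k$, i.e.\ $[f^{-1}(G_j)]\mid[E_k]$, so primality of $[E_k]$ forces $\{f^{-1}(G_j)\}\sim\{E_k\}$, hence $\{G_j\}\sim\{f(E_k)\}$ and $\ms{G}=[f(E_k)]$. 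Openness, connectedness, compactness of closure, nonempty intersection with $\partial\Omega$, and Condition~(c) for $\{f^{-1}(G_j)\}$ all follow as in the first paragraph. The main obstacle is Condition~(b), namely
\[
\dM(\Omega\cap\partial f^{-1}(G_{j+1}),\,\Omega\cap\partial f^{-1}(G_j))>0,
\]
which is a \emph{reverse} form of Lemma~\ref{lem:maz-dist-bdry-positive}. I prove it by a symmetric use of BQS. Given a continuum $\gamma\subset\Omega$ meeting both boundaries above, $f(\gamma)\subset\Omega'$ joins $\Omega'\cap\partial G_{j+1}$ and $\Omega'\cap\partial G_j$, hence $\diam f(\gamma)\ge\tau'>0$ by Condition~(b) for $\{G_j\}$. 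Fixing a continuum $A\subset f^{-1}(G_j)$ of positive diameter and a continuum $\beta\subset f^{-1}(G_j)$ meeting both $A$ and $\gamma$ with $\diam\beta\le 2\diam f^{-1}(G_j)<\infty$, and applying BQS to the intersecting continua $\gamma$ and $A\cup\beta$ in the form
\[
\frac{\diam f(\gamma)}{\diam f(A\cup\beta)}\le\eta\!\left(\frac{\diam\gamma}{\diam(A\cup\beta)}\right),
\]
together with $\diam f(A\cup\beta)\le\diam_M f(A\cup\beta)<\infty$ (Lemma~\ref{lem:bdd-to-bdd}), I rearrange using the right-inverse of $\eta$ (well-defined by monotonicity and $\lim_{t\to 0^+}\eta(t)=0$) to obtain $\diam\gamma\ge\diam A\cdot\eta^{-1}(\tau'/\diam f(A\cup\beta))>0$, a positive lower bound independent of $\gamma$.

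With $f_P$ well-defined and sending prime ends to prime ends, I construct $(f^{-1})_P:\pec{\Omega'}\to\pec{\Omega}$ by the same procedure: the forward-direction Mazurkiewicz separation needed there is precisely the reverse estimate just proved, while the primality step for $(f^{-1})_P$ relies on Lemma~\ref{lem:maz-dist-bdry-positive} itself. By construction the two maps are mutual inverses. Continuity in the sequential topology is immediate: if $\ms{E}_n\to\ms{E}$ is witnessed by inclusions $E_{m_{n,k},n}\subset E_k$, applying $f$ produces $f(E_{m_{n,k},n})\subset f(E_k)$, witnessing $f_P(\ms{E}_n)\to f_P(\ms{E})$. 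Mixed cases (points in $\Omega$ converging to a prime end, or prime ends converging to a point of $\Omega$) are handled using continuity of $f$ on $\Omega$ together with the same inclusion-translation. The analogous argument for $f^{-1}$ gives continuity of $(f_P)^{-1}$, so $f_P$ is a homeomorphism. The main anticipated obstacle is the reverse Mazurkiewicz-separation estimate of the second paragraph.
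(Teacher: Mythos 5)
Your proposal is correct and follows essentially the same route as the paper: map chains to chains (with Lemma~\ref{lem:maz-dist-bdry-positive} supplying condition~(b)), transfer divisibility through $f$ and $f^{-1}$ to get primality, and check sequential continuity by translating the defining inclusions. The only organizational difference is that where you prove the reverse Mazurkiewicz separation estimate by hand via a generalized inverse of $\eta$, the paper simply observes that $f^{-1}$ is itself a BQS homeomorphism and applies Lemma~\ref{lem:maz-dist-bdry-positive} to it — the two arguments amount to the same computation.
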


Note that both $\Omega$ and $\Omega^\prime$ are locally path connected because $X$ and $Y$ are; however,
we do not assume any further topological properties for these domains. For instance, we do not assume that either
$\Omega$ or $\Omega^\prime$ is finitely connected at their boundaries, and so we do not know that
$\overline{\Omega}^P$ is compact.

\begin{proof}
We must show that $f$ maps prime ends to prime ends.  

We first show that $f$ maps chains to chains.  As $f$ is a homeomorphism, $f$ maps admissible sets to 
admissible sets.  Indeed, if $E\subset\Omega$ is an admissible set, then $E$ is connected and so $f(E)$ is 
also connected. Suppose $\overline{f(E)}$ does not intersect $\partial\Omega^\prime$; then $\overline{f(E)}$ is
a compact subset of $\Omega^\prime$, and hence $f^{-1}(\overline{f(E)})$ is a compact subset of $\Omega$.
Therefore $\overline{E}\subset f^{-1}(\overline{f(E)})\subset\Omega$, which violates the requirement that
$\overline{E}\cap\partial\Omega$ be non-empty.

Suppose $\{E_i\}$ is a chain in $\Omega$.  
We check conditions $(a) - (c)$ of Definition~\ref{def:chains} for the sequence $\{fE_i\}$.  
Note that if $E \subseteq F \subseteq \Omega$, then $fE \subseteq fF \subseteq \Omega'$ as $f$ is a homeomorphism.  
Hence, $fE_{i+1} \subseteq fE_i$ for all $i$, so condition (a) holds.  
Condition~(b) follows from Lemma~\ref{lem:maz-dist-bdry-positive} by considering $E:=E_{k+1}$ and
$F:=E_k$.

Lastly, let $\{E_i\} \in \mathscr{E}$, and for each $i$ let $F_i = fE_i$.  
Suppose $y \in \Omega'$.  Let $x = f^{-1}(y) \in \Omega$.  As $\bigcap_{i=1}^\infty \overline{E_i} \subseteq \partial \Omega$, 
there is an $i_0$ and a $\rho > 0$ such that $B(x, \rho) \subseteq \Omega$ 
and $B(x, \rho) \cap \overline{E_{i_0}} = \emptyset$.  Then, 
$f(B(x,\rho))\cap f(E_{i_0})$ is empty. As $f(B(x,\rho))$ open and $y\in f(B(x,\rho))$, there is some
$\rho_1>0$ such that $B(y,\rho_1)\subset\Omega^\prime\cap f(B(x,\rho))$. It follows that
$\overline{f(E_{i_0})}\cap B(y,\rho_1/2)$ is empty, and hence $y\not\in\overline{f(E_{i_0})}$.
 Condition (c) follows.
 
 The above argument tells us that given a BQS homeomorphism $f:\Omega\to\Omega^\prime$, each chain $\{E_k\}$
 is mapped to a chain $f\{E_k\}:=\{f(E_k)\}$.  Next we show that ends are mapped to ends. To this end, note that if $\{G_k\}$ is also
 a chain in $\Omega$ with $\{G_k\} | \{F_k\}$, then for each $k$ there is a positive integer $i_k$ such that
 $G_{i_k}\subset F_k$. Hence $f(G_{i_k})\subset f(E_k)$, that is, $f\{G_k\} | f\{E_k\}$. Therefore division relation among
 chains is respected by $f$. Therefore, for each end $\ms{E}$ we have that $f\ms{E}\subset\ms{G}$ for some
 end $\ms{G}$ of $\Omega^\prime$. Given that $f^{-1}:\Omega^\prime\to\Omega$ is also a BQS homeomorphism, 
 we see that $f\ms{E}=\ms{G}$. Finally, if $\ms{E}$ is a prime end, then whenever $\{G_k\}$ is an end for $\Omega^\prime$
 with $\{G_k\} | f\ms{E}$, we have that $f^{-1}\{G_k\} | \ms{E}$ and hence as $\ms{E}$ is a prime end, we must have
 $f^{-1}\{G_k\}\in\ms{E}$. Therefore $\ms{E} | f^{-1}\{G_k\}$, from which it follows that $f\ms{E} | \{G_k\}$, that is,
 $\{G_k\}\in f\ms{E}$. Hence $f\ms{E}$ is also a prime end. Thus we conclude that whenever $\ms{E}\in\partial_P\Omega$,
 we must have $f\ms{E}\in\partial_P\Omega^\prime$, and we therefore have the natural extension
 $f_P:\overline{\Omega}^P\to\overline{\Omega^\prime}^P$ as desired. 
 
 The fact that $f_P$ is bijective follows from applying 
 the above discussion also to $f^{-1}$ (a BQS homeomorphism in its own right) to obtain an inverse of $f_P$.

To complete the proof of the theorem it suffices to show that $f_P$ is continuous, for 
then a similar argument applied to $f^{-1}$ and $f^{-1}_P$ yields continuity of $f_P^{-1}$.
To show continuity, it suffices to show that if 
$\{\zeta_k\}$ is a sequence in $\overline{\Omega}^P$ and $\zeta\in\overline{\Omega}^P$ such that 
$\zeta_k\to\zeta$, then $f\zeta_k\to f\zeta$. If $\zeta\in\Omega$, then the tail-end of the sequence also must lie in
$\Omega$, and thus the claim will follow from the fact that $f_P\vert_\Omega=f$ is a homeomorphism of $\Omega$.
Therefore it suffices to consider only the case $\zeta\in\partial_P\Omega$. By separating the sequence into two subsequences
if necessary, we can consider two cases, namely, that $\zeta_k\in\Omega$ for each $k$, or $\zeta_k\in\partial_P\Omega$ for
each $k$.

First, suppose that $\zeta_k\in\Omega$ for each $k$. Let $\{E_k\}\in\zeta$. Then the fact that $\zeta_k\to\zeta$ tells us that
for each $k\in\N$ we can find a positive integer $i_k$ such that whenever $i\ge i_k$, we have $\zeta_i\in E_k$.
It then follows that $f(\zeta_i)\in f(E_k)$. Thus for each $k$ there is a positive integer $i_k$ with $f(\zeta_i)\in f(E_k)$ when
$i\ge i_k$, that is, $f(\zeta_k)\to f([E_k])=f(\zeta)$.

Finally, suppose that $[F_{j,k}]=\zeta_k\in\partial_P\Omega$ for each $k$. We again choose any chain $\{E_k\}\in\zeta$. Then
for each $k$ there is a positive integer $i_k$ such that for each $i\ge i_k$, there exists a positive integer $j_{i,k}$ such that
whenever $j\ge j_{i,k}$ we have $F_{j,i}\subset E_k$. So we have $f(F_{j,i})\subset f(E_k)$ for
$j\ge j_{i,k}$, that is, $f([F_{j,k}])=f(\zeta_k)\to f([E_k])=f(\zeta)$. This completes the proof of the theorem.
\end{proof}

\begin{remark}\label{rmk:usefulness1}
The only place in the above proof where the BQS property plays a key role is in verifying that if
$E_k,E_{k+1}$ are two connected sets in $\Omega$ with $E_{k+1}\subset E_k$ and
\begin{equation}\label{eq:0}
\dM(\Omega\cap\partial E_k,\Omega\cap\partial E_{k+1})>0,
\end{equation} 
then we must have
\begin{equation}\label{eq:1}
\dM(\Omega^\prime\cap\partial f(E_k),\Omega^\prime\cap\partial f(E_{k+1}))>0. 
\end{equation}
This part was explicated into Lemma~\ref{lem:maz-dist-bdry-positive}.
The remaining parts of
the proof of existence and continuity of the map $f_P$ 
hold for \emph{any} homeomorphism $f:\Omega\to\Omega^\prime$ that satisfies~\eqref{eq:1}
for each pair of connected sets 
$E_{k+1}\subset E_{k}\subset \Omega$ that satisfies~\eqref{eq:0}. This is useful to 
keep in mind in the final section, where we consider quasiconformal mappings that are not necessarily
BQS maps.
\end{remark}


In the situation of the above theorem, if $\overline{\Omega}^P$ is compact, then so is
$\overline{\Omega^\prime}^P$. This is a useful property in determining that two domains are not BQS-homeomorphically
equivalent, as the following example shows.

\begin{ex}\label{example:comb}
With $\Omega$ a planar disk and $\Omega^\prime$ given by
\[
\Omega^\prime:=(0,1)\times(0,1)\setminus \bigcup_{n\in\N}\{1/n\}\times[0,1/2],
\]
we know from the Riemann mapping theorem that $\Omega$ and $\Omega^\prime$ are conformally equivalent. 
However, as $\overline{\Omega}^P$ is the closed disk, which is compact, and as
$\overline{\Omega^\prime}^P$ is not even sequentially compact, it follows that there is \emph{no}
BQS homeomorphism between these two planar domains.
\end{ex}

The above theorem is useful in other contexts as well.

\begin{prop}\label{prop:bdd-domain-ends-match}
Suppose that $X$ and $Y$ are locally path connected and $\Omega\subset X$, $\Omega^\prime\subset Y$ are
bounded open connected sets. Suppose that there is a BQS homeomorphism $f:\Omega\to\Omega^\prime$. 
\begin{enumerate}
\item[(a)] If $\overline{\Omega}^P$ is compact, then so is $\overline{\Omega^\prime}^P$.
\item[(b)] If $\overline{\Omega}^P$ is metrizable, then so is $\overline{\Omega^\prime}^P$.
\item[(c)] If $\Omega$ is finitely connected at the boundary, then so is $\Omega^\prime$.
\end{enumerate}
\end{prop}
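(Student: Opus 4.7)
The overall plan is to derive all three claims from Theorem~\ref{thm:BQS-fp-homeo}, which produces a homeomorphism $f_P:\overline{\Omega}^P\to\overline{\Omega^\prime}^P$, together with Lemma~\ref{lem:BQS-single-to-single}, applied as needed to the inverse BQS homeomorphism $f^{-1}$ (which is BQS in its own right).

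Part~(a) is essentially immediate: compactness is a topological invariant, and $f_P$ is a homeomorphism, so $\overline{\Omega^\prime}^P = f_P(\overline{\Omega}^P)$ is compact whenever $\overline{\Omega}^P$ is. Part~(b) is equally formal: if $d^P$ is a metric on $\overline{\Omega}^P$ that induces its sequential topology, then the push-forward $(\xi,\xi^\prime)\mapsto d^P(f_P^{-1}(\xi),f_P^{-1}(\xi^\prime))$ is a metric on $\overline{\Omega^\prime}^P$ inducing the sequential topology there, so $\overline{\Omega^\prime}^P$ is metrizable.

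For part~(c), I would invoke the characterization recalled from~\cite{ABBS, BBS1} that a bounded domain is finitely connected at every boundary point if and only if every one of its prime ends has singleton impression \emph{and} its prime end boundary is compact. Both properties then need to be verified for $\Omega^\prime$. Since $f_P$ maps $\Omega$ bijectively onto $\Omega^\prime$, it must map $\partial_P\Omega$ bijectively onto $\partial_P\Omega^\prime$, and so restricts to a homeomorphism $f_P|_{\partial_P\Omega}:\partial_P\Omega\to\partial_P\Omega^\prime$ for the subspace topologies, whence compactness of $\partial_P\Omega$ transfers to $\partial_P\Omega^\prime$. For the singleton impression property, I would take an arbitrary prime end $\ms{G}\in\partial_P\Omega^\prime$, choose a chain $\{G_k\}\in\ms{G}$, and consider the pulled-back chain $\{f^{-1}(G_k)\}$, which represents the prime end $f_P^{-1}(\ms{G})$ of $\Omega$ by Theorem~\ref{thm:BQS-fp-homeo} and hence has singleton impression by hypothesis. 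Applying Lemma~\ref{lem:BQS-single-to-single} to the BQS map $f$ and the chain $\{f^{-1}(G_k)\}$ then yields that $I(\ms{G})=\bigcap_k\overline{G_k}=\bigcap_k\overline{f(f^{-1}(G_k))}$ is a singleton.

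The only mildly nontrivial point is the bookkeeping in part~(c) that identifies $f_P$ as a bijection, and indeed a homeomorphism, between $\partial_P\Omega$ and $\partial_P\Omega^\prime$; this follows at once from $f_P$ being a homeomorphism of the ambient prime end closures that maps $\Omega$ onto $\Omega^\prime$. Everything else is a straightforward formal consequence of results already established.
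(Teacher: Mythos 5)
Your proposal is correct and follows essentially the same route as the paper: parts (a) and (b) by homeomorphism invariance via Theorem~\ref{thm:BQS-fp-homeo}, and part (c) by combining the characterization of finite connectedness at the boundary from~\cite{BBS1} (compactness of the prime end closure plus singleton impressions) with Lemma~\ref{lem:BQS-single-to-single} applied to the pulled-back chains. The only difference is that you spell out the bookkeeping (that $f_P$ restricts to a bijection of prime end boundaries and that the pulled-back chain has singleton impression by hypothesis) which the paper leaves implicit.
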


\begin{proof}
By Theorem~\ref{thm:BQS-fp-homeo}, homeomorphically invariant properties of $\overline{\Omega}^P$
will be inherited by $\overline{\Omega^\prime}^P$; hence the first two claims of the proposition are
seen to be true. On the other hand, finite connectivity of a domain at its boundary is not a 
homeomorphic invariant. 
Hence we provide a proof of the last claim of the proposition here.

Note by~\cite{BBS1} that a bounded domain is finitely connected at the boundary if and only if all of
its prime ends are singleton prime ends and its prime end closure is compact. Thus if $\Omega$ is
finitely connected at the boundary, then $\overline{\Omega}^P$ is compact, and hence so is
$\overline{\Omega^\prime}^P$. Hence to show that $\Omega^\prime$ is finitely connected at the 
boundary, it suffices to show that all of the prime ends of $\Omega^\prime$ are singleton prime ends.
This is done by invoking Lemma~\ref{lem:BQS-single-to-single}.
\end{proof}

\begin{ex}\label{ex:long-toothed-double-comb}
It is possible to have two homeomorphic equivalent domains with one finitely connected at the boundary and
the other, not finitely connected at the boundary. Let 
\[
\Omega:=(0,1)\times(0,1)\setminus\bigcup_{n\in\N}\left([0,1-1/n]\times\{1/2n\}\right)\cup\left([1/n,1]\times\{1/(2n+1)\}\right)
\]
and $\Omega^\prime$ be the planar unit disk. Then as $\Omega$ is simply connected and bounded, it follows from
the Riemann mapping theorem that $\Omega$ and $\Omega^\prime$ are conformally, and hence homeomorphically,
equivalent. However, $\Omega^\prime$ is finitely connected at the boundary and $\Omega$ is not.
\end{ex}

\begin{ex}
The domain $\Omega$ constructed in Example~\ref{ex:long-toothed-double-comb} has the property that
$\overline{\Omega}^P$ is compact and metrizable. 
This is seen by the fact that $\overline{\Omega}^P$ is homeomorphic
to $\overline{U}^P$ where $U:=\varphi(\Omega)$ where $\varphi:\R^2\to\R^2$ is given by
$\varphi(x,y)=(xy,y)$. Observe that $U$ is simply connected at its boundary, and so 
$\overline{U}^P$ is metrizable.

On the other hand, the domain $\Omega^\prime$ given by
\[
\Omega^\prime:=(0,1)\times(0,1)\setminus\bigcup_{n\in\N}\left([0,2/3]\times\{1/2n\}\right)\cup
\left([1/3,1]\times\{1/(2n+1)\}\right)
\]
has the property that $\overline{\Omega^\prime}^P$ is not compact. 
Hence $\Omega$ is not BQS homeomorphically equivalent to $\Omega^\prime$, even
though both are simply connected planar domains that are not finitely connected at the boundary.
\end{ex}


\begin{ex}\label{example:slit}
With $\Omega$ the planar domain given by 
\[
\Omega:=\{z\in\C\, :\, |z|<1\text{ and }\text{Im}(z)>0\}
\] 
and
$\Omega^\prime:=\{z\in\C\, :\, |z|<1\}\setminus [0,1]$ a slit disk, the map
$f:\Omega\to\Omega^\prime$ given by $f(z)=z^2$ is a conformal equivalence that is not a quasisymmetric
map. However, this map is a BQS homeomorphism, and so has an extension
$f_P:\overline{\Omega}=\overline{\Omega}^P\to\overline{\Omega^\prime}^P$.
This follows from~\cite[Theorem~6.50]{GW2}.
\end{ex}

\begin{ex}
With $\Omega$ and $\Omega^\prime$ as in Example~\ref{example:slit} above, the map
$h:\Omega\to\Omega^\prime$ given by $h(re^{i\theta})=re^{2i\theta}$ is also a BQS homeomorphism which is
not conformal. This again follows from~\cite[Theorem~6.50]{GW2}.
\end{ex}

As indicated by the definition of chains in the prime end theory, a domain $\Omega$ can also naturally
be equipped with the Mazurkiewicz metric $\dum$. 

\begin{prop}\label{prop:BQS-Mazur}
Let $X$ and $Y$ be locally path-connected metric spaces, and 
$f:\Omega\to\Omega^\prime$ be a homeomorphism. Then $f$ is BQS with respect to the respective
metrics $d_X$ and $d_Y$ on 
$\Omega$ and $\Omega^\prime$ if and only if it is BQS with respect to the corresponding Mazurkiewicz metrics
$\dum$, $\dumm$.
\end{prop}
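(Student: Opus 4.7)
The plan is to reduce the equivalence to a single observation: for any continuum $E\subset\Omega$, the $d_X$-diameter and the $\dum$-diameter of $E$ coincide. Since the BQS condition is phrased purely in terms of continua and their diameters, this collapses both formulations into the same inequality with the same distortion function $\eta$.

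First I would record that the classes of continua in $\Omega$ relative to $d_X$ and to $\dum$ agree. This is immediate from the remark preceding the proposition, which establishes that the two metrics generate the same topology on $\Omega$ (using local path-connectedness of $X$); compactness and connectedness are topological invariants, so $E\subset\Omega$ is a continuum in $(\Omega,d_X)$ if and only if it is one in $(\Omega,\dum)$. The same applies to $\Omega'$ with $d_Y$ and $\dumm$. In particular, $f$ sends continua to continua in either formulation.

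Next I would prove the key diameter identity: if $E\subset\Omega$ is a continuum and $x,y\in E$, then on the one hand $d_X(x,y)\le \dum(x,y)$ directly from the definition (since every continuum realizing the Mazurkiewicz infimum has diameter at least $d_X(x,y)$), so $\diam_{d_X}(E)\le \diam_{\dum}(E)$; on the other hand, $E$ itself is a continuum in $\Omega$ containing $x$ and $y$, so $\dum(x,y)\le\diam_{d_X}(E)$, giving $\diam_{\dum}(E)\le\diam_{d_X}(E)$. Hence
\[
\diam_{\dum}(E)=\diam_{d_X}(E),
\]
and analogously $\diam_{\dumm}(f(E))=\diam_{d_Y}(f(E))$ for every continuum $E\subset\Omega$.

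Finally, plugging these identities into the BQS inequality finishes the proof. If $f$ is BQS with distortion $\eta$ with respect to $d_X,d_Y$, then for any nondegenerate continua $E,F\subset\Omega$ with $E\cap F\neq\emptyset$,
\[
\frac{\diam_{\dumm}(f(E))}{\diam_{\dumm}(f(F))}=\frac{\diam_{d_Y}(f(E))}{\diam_{d_Y}(f(F))}\le \eta\!\left(\frac{\diam_{d_X}(E)}{\diam_{d_X}(F)}\right)=\eta\!\left(\frac{\diam_{\dum}(E)}{\diam_{\dum}(F)}\right),
\]
so $f$ is BQS with respect to $\dum,\dumm$ with the same $\eta$; the reverse implication is identical. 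There is no real obstacle here, since the topologies were already shown to coincide in the remark; the only point requiring a line of justification is the diameter identity for continua, and it is an immediate two-sided estimate.
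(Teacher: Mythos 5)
Your proposal is correct and follows essentially the same route as the paper: both reduce the statement to the identity $\diam_{d_X}(E)=\diam_{\dum}(E)$ for continua $E\subset\Omega$, obtained from $d_X\le\dum$ on one side and from the fact that $E$ itself competes in the infimum defining $\dum(x,y)$ for $x,y\in E$ on the other. Your version is if anything slightly cleaner, since you avoid the paper's $\varepsilon$-approximation of the Mazurkiewicz diameter and you explicitly note that the class of continua is the same for both metrics.
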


\begin{proof}
To prove the theorem it suffices to show that whenever $E\subset\Omega$ is a continuum, we have
$\diam(E)=\diam_M(E)$. Note that $x,y\in\Omega$ we have
$d(x,y)\le \dum(x,y)$. To prove the reverse of this inequality, let
$E\subset\Omega$ be a continuum, $\varepsilon>0$, and we choose $x,y\in E$ such that 
$[1+\varepsilon]\dum(x,y)\ge \diam_M(E)$. By the definition of the Mazurkiewicz metric $\dum$ (see Definition~\ref{def:Mazurk}),
we have
\[
\dum(x,y)\le \diam(E).
\]
It follows that
\[
\diam_M(E)\le [1+\varepsilon]\diam(E),
\]
The desired claim now follows by letting $\varepsilon\to 0^+$.
\end{proof}

\begin{remark}\label{rem:fp-is-BQS}
By the above proposition together with~\cite[Proposition~10.11]{Hei}, if $\Omega$ and
$\Omega^\prime$ are finitely connected at their respective boundaries
and $f:\Omega\to\Omega^\prime$ is a BQS homeomorphism, then its extension 
$f_P:\overline{\Omega}^P\to\overline{\Omega^\prime}^P$ is a BQS homeomorphism with respect to 
the respective Mazurkiewicz metrics $\dum$ and $\dumm$.
\end{remark}

Given Proposition~\ref{prop:BQS-Mazur} and the above remark, it is natural to ask why we do not consider
prime ends as constructed with respect to the Mazurkiewicz metric induced by the original metric $d_X$
on $\Omega$. However, while the property that a set $E\subset\Omega$ be connected
is satisfied under both $d_X$ and the induced Mazurkiewciz metric $\dum$, the crucial property
that $\overline{E}\cap\partial\Omega$ be non-empty depends on the metric (since the closure
$\overline{E}$ depends on the metric), and hence we have fewer acceptable sets and chains, and therefore
prime ends, under the induced metric $\dum$ than under the original metric $d_X$. 
See Lemma~\ref{lem:prime-end-dM} below for the limitations put on prime ends under $\dum$.
For instance, the 
domain $\Omega$ as in Example~\ref{ex:long-toothed-double-comb} will have as prime ends under $\dum$ 
only the singleton prime ends, with no prime end with impression containing any point $(x,0)$, $0\le x\le 1$.
Hence under the metric $\dum$ the prime end closure of $\Omega$ will not be compact. The issue of 
considering prime ends with respect to the original metric $d_X$ versus the induced metric $\dum$
becomes even more of a delicate issue when $\Omega$ is not bounded, see Section~\ref{sec:unbounded} below.

\begin{lemma}\label{lem:prime-end-dM}
Let $\Omega$ be a bounded domain in a locally path connected complete
metric space $X$, and let $\dum$ be
the Mazurkiewicz metric on $\Omega$ induced by the metric $d_X$ inherited from $X$. If
$[\{E_k\}]$ is a prime end in $\Omega$ with respect to $\dum$, then $[\{E_k\}]$ is a 
prime end with
respect to $d_X$ with $\bigcap_k\overline{E_k}$ a singleton set.
\end{lemma}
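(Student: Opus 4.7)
The plan is to exploit the natural $1$-Lipschitz projection $\pi\colon\overline{\Omega}^M\to\overline{\Omega}$ from the $\dum$-completion of $\Omega$ to its $d_X$-closure, which sends each equivalence class of $\dum$-Cauchy sequences to its $d_X$-limit (well-defined since $d_X\le\dum$ on $\Omega$). I will first show that the $\dum$-impression $I^M:=\bigcap_k\overline{E_k}^M$ (closure in $\overline{\Omega}^M$) is a singleton; then $\bigcap_k\overline{E_k}=\pi(I^M)$ will automatically be a singleton in $\partial\Omega$; and finally the characterization from~\cite{ABBS}, already used in the proof of the Bounded Turning Boundary Lemma, that a chain with singleton impression defines a prime end, will finish the proof.

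Several observations frame the argument. Local path-connectedness of $X$ forces $\pi$ to send $\partial^M\Omega:=\overline{\Omega}^M\setminus\Omega$ into $\partial\Omega$: a $\dum$-Cauchy sequence in $\Omega$ that $d_X$-converges to an interior point $y\in\Omega$ eventually sits in any path-connected open $\Omega$-neighborhood of $y$, and hence also $\dum$-converges to $y$. The same observation yields condition~(c) of Definition~\ref{def:chains} for $\{E_k\}$ with respect to $d_X$: a point of $\Omega$ in $\bigcap_k\overline{E_k}$ would lift to an element of $I^M\cap\Omega$, which is empty by the chain condition in $\dum$. Conditions~(a) and~(b) are set-theoretic and Mazurkiewicz-metric statements independent of whether one works in $X$ or in $\overline{\Omega}^M$, and acceptability of each $E_k$ in $d_X$ is automatic since $\Omega$ is bounded in the proper space $X$. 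So once $I^M=\{\xi\}$ is known, $\{E_k\}$ will be a chain in $d_X$ with singleton impression $\{\pi(\xi)\}\subset\partial\Omega$.

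The heart of the proof is singleton-ness of $I^M$. Fix any $\xi\in I^M$ and build a chain $\{F_k\}$ in $\dum$ with $F_k\subset E_k$ (so $\{F_k\}\,|\,\{E_k\}$ trivially) and $\bigcap_k\overline{F_k}^M=\{\xi\}$; primality of $[\{E_k\}]$ in $\dum$ then forces $\{F_k\}\sim\{E_k\}$, so $I^M=\{\xi\}$. The crucial estimate is
\[
\dum(\xi,z)\ \ge\ \delta_k:=\dM(\Omega\cap\partial E_{k+1},\Omega\cap\partial E_k)\ >\ 0 \quad\text{for every } z\in\Omega\cap\partial E_k,
\]
which follows because $\xi\in\overline{E_{k+1}}^M$ and any continuum in $\Omega$ from a point of $E_{k+1}$ to $z$ must meet $\Omega\cap\partial E_{k+1}$ and hence have $d_X$-diameter at least $\delta_k$. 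Choosing a strictly decreasing $\tau_k\downarrow 0$ with $\tau_k<\delta_k/2$, let $F_k$ be the connected component of $B_M(\xi,\tau_k)\cap\Omega$, where $B_M(\xi,\tau_k):=\{y\in\overline{\Omega}^M:\dum(y,\xi)<\tau_k\}$, that contains all sufficiently-late terms of some sequence in $E_k$ that $\dum$-converges to $\xi$. That such tails lie in a single component follows from the defining infimum-of-continuum-diameters property of $\dum$, which provides short continua realizing the $\dum$-distance that by the triangle inequality stay inside $B_M(\xi,\tau_k)$; the displayed estimate then forces $B_M(\xi,\tau_k)\cap\Omega$ to avoid $\Omega\cap\partial E_k$, so the component $F_k$ (meeting $E_k$) lies entirely in $E_k$.

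The remaining chain properties follow: acceptability holds because $\overline{F_k}^M\subset\overline{E_k}^M$ is compact and $\xi\in\overline{F_k}^M\cap\partial^M\Omega$; nesting $F_{k+1}\subset F_k$ follows from the same continuum argument; the inclusion $\Omega\cap\partial F_k\subset\{y\in\Omega:\dum(y,\xi)=\tau_k\}$, coming from local connectedness of $B_M(\xi,\tau_k)\cap\Omega$ inside the locally path-connected $\Omega$, combined with the triangle inequality yields $\dM(\Omega\cap\partial F_{k+1},\Omega\cap\partial F_k)\ge\tau_k-\tau_{k+1}>0$; and $\bigcap_k\overline{F_k}^M\subset\bigcap_k\{y\in\overline{\Omega}^M:\dum(y,\xi)\le\tau_k\}=\{\xi\}$, with the reverse inclusion by construction. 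The main obstacle is justifying the displayed key estimate and then using the defining continuum-infimum property of $\dum$ to pin down the correct component $F_k$ and confine it inside $E_k$; once these are in hand, projecting via $\pi$ and invoking the \cite{ABBS} result completes the proof.
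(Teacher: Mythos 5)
Your proof is correct and follows essentially the same route as the paper's: both arguments pick a point $\xi$ in the (nonempty, by compactness of the $\overline{E_k}^M$) Mazurkiewicz impression, build an equivalent shrinking chain of ball-components around it that divides $\{E_k\}$, invoke primality with respect to $\dum$ to upgrade divisibility to equivalence, and then pass to $d_X$ via the \cite{ABBS} singleton-impression criterion; your use of Mazurkiewicz balls $B_M(\xi,\tau_k)$ together with the estimate $\dum(\xi,z)\ge\delta_k$ to force $F_k\subset E_k$ directly is a clean variant of the paper's construction with $d_X$-balls around the $d_X$-limit. One small correction: the lemma does not assume $X$ is proper, so acceptability of the $E_k$ with respect to $d_X$ should be justified, as the paper does, by noting that compactness of $\overline{E_k}^M$ (from $\dum$-acceptability) already forces compactness of $\overline{E_k}$, rather than by boundedness of $\Omega$ in a proper space.
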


\begin{proof}
Note first that if $E\subset\Omega$ with $\overline{E}^M$ compact, then $\overline{E}^M$ is 
sequentially compact and hence so is $\overline{E}$; hence it follows that if $\overline{E}^M$
is compact, then $\overline{E}$ is also compact. Combining this together with the fact that
$\diam=\diam_M$ for connected sets, tells us that if $\{E_k\}$ is a chain with respect to $\dum$ then
it is a chain with respect to $d$.

Next, note that if $\{E_k\}$ is a chain with respect to $\dum$ and $\{G_k\}$ is a chain with
respect to $d$ such that $\{G_k\}\, |\, \{E_k\}$, then, by passing to a sub-chain if necessary, 
$\{G_k\}$ is also a chain with respect
to $\dum$. Indeed, the only property that needs to be verified here is whether each
$\overline{G_k}^M$ is compact. By the divisibility assumption, we know that there is some 
$j_0\in\N$ such that whenever $j\ge j_0$ we have $G_j\subset E_1$. Then $\overline{G_j}^M$ is
a closed subset of the compact set $\overline{E_1}^M$, and hence is compact.

Since $E_k$ is acceptable with respect to $\dum$, we know that $\overline{E_k}^M$, the closure of
$E_k$ under the metric $\dum$, is compact. It then follows also that 
$\bigcap_k\overline{E_k}^M$ is non-empty and has a point $\zeta\in\partial_M\Omega$. We can then
find a sequence $x_k\in E_k$ with $\lim_k \dum(x_k,\zeta)=0$. Since $d_X\le \dum$ on $\Omega$,
it follows that $\{x_k\}$ is a Cauchy sequence with respect to the metric $d_X$, and as
$X$ is complete, there is a point $x\in \overline{\Omega}$ such that $\lim_kd_X(x_k,x)=0$.
Since $\{x_k\}$ is also a Cauchy sequence with respect to the metric $\dum$, for each
positive integer $k$ we can find a continuum $\Gamma_k$ in $\Omega$ with end points
$x_k$ and $x_{k+1}$ such that (by passing to a subsequence if necessary) 
\[
\diam_M(\Gamma_k)=\text{diam}(\Gamma_k)\le 2^{-(k+1)}.
\]
In the above, the first identity is inferred from the proof of 
Proposition~\ref{prop:BQS-Mazur} above. For each positive ineger $k$ let
$G_k$ be the connected component of $B(x,2^{1-k})\cap\Omega$ containing the connected set
$\bigcup_{j\ge k}\Gamma_j\subset\Omega$. 
Then $\{G_k\}$ is a chain in $\Omega$ with respect to the metric $d$, and
$\{x\}=I(\{G_k\})$. It is clear from the second paragraph above
that $\{G_k\}$ is a chain also with respect to
$\dum$, with $\{\zeta\}=\bigcap_k\overline{G_k}^M$. As a chain with singleton impression, it
follows that the end (with respect to $\dum$) that $\{G_k\}$ belongs to is a prime end.
Moreover, for each positive integer $k$ we know that infinitely many of the sets $G_j$, $j\in\N$,
intersect $E_k$; it follows from the assumption that $[\{E_k\}]$ is a prime end that
$[\{E_k\}]=[\{G_k\}]$ (see the last part of the proof of
Lemma~\ref{lem:end-div-prime-finite-conn}), and so the claim follows. 
\end{proof}

\section{Prime ends for unbounded domains and extensions of BQS homeomorphisms}\label{sec:unbounded}

In this section we consider unbounded domains, and so we adopt the modification to the construction
of ends found in~\cite[Definition~4.12]{E}. However, we introduce a slight modification, namely
the boundedness of the separating compacta $R_k$ and $K$ wiht respect to the Mazurkiewicz metric $\dum$.
Recall that a metric space is proper if closed and bounded subsets of the
space are compact.

\begin{defn}\label{def:type-ab-ends}
A connected set $E\subset\Omega$ is said to be acceptable if $\overline{E}$ is proper and 
$\overline{E}\cap\partial\Omega$ is non-empty. A sequence $\{E_k\}$ of acceptable subsets of 
$\Omega$ is a chain if for each positive integer $k$ we have
\begin{enumerate}
  \item[(a)] $E_{k+1}\subset E_k$,
  \item[(b)] $\overline{E_k}$ is compact, or $E_k$ is unbounded and there is a compact set $R_k\subset X$ such that
 $\Omega\setminus R_k$ has two components $C_{1,k}$ and $C_{2,k}$ with 
 $\Omega\cap\partial E_k\subset C_{1,k}$ and $\Omega\cap\partial E_{k+1}\subset C_{2,k}$, and
 $\diam_M(R_k\cap\Omega)$ is finite,
  \item[(c)] $\dM(\Omega\cap\partial E_k,\Omega\cap\partial E_{k+1})>0$,
  \item[(d)] $\emptyset\ne\bigcap_k\overline{E_k}\subset\partial\Omega$.
 \end{enumerate}
\end{defn}

The above notion of ends agrees with the notion of ends from Section~\ref{sec:bdd-prime}
when $\Omega$ is bounded, for then the properness of $\overline{E}$ will guarantee that
$\overline{E}$ is compact. When $\Omega$ is not bounded, the above definition gives us two
types of ends,~(a) those that are bounded (i.e.~$E_k$ is bounded for some positive integer $k$)
and (b)~those that are unbounded (i.e.~$E_k$ is unbounded for each $k$). 
While the definition above agrees in philosophy with that from~\cite{E}, but the
requirement that $\diam_M(R_k\cap\Omega)$ be finite eliminates some of the candidates for
ends from~\cite{E} here. However, ends in our sense are necessarily ends in the sense of~\cite{E}.
From~\cite[Section~4.2]{E} we know that if $X$ itself is proper, then
ends (and prime ends, see below) as given in~\cite{E} are invariant under sphericalization 
and flattening procedures, and so understanding ends of type~(b) above can be accomplished by 
sphericalizing the domain and understanding ends of the resulting bounded domain with impression
containing the new point that corresponds to $\infty$.
Both of these types of
ends have non-empty impressions. 
The next definition deals with ends that should philosophically
be ends with only $\infty$ in their impressions.

\begin{defn}\label{def:type-c-ends}
An unbounded connected set $E\subset\Omega$ is said to be \emph{acceptable at $\infty$} if $\overline{E}$
is proper and there is a compact set $K\subset X$ 
such that
$\diam_M(K\cap\Omega)<\infty$ and 
$E$ is a component of $\Omega\setminus K$. A sequence $\{E_k\}$ of sets that are acceptable
at $\infty$ is said to be a \emph{chain at $\infty$} if for each positive integer $k$ we have
\begin{enumerate}
  \item[(a)] $E_{k+1}\subset E_k$,
  \item[(b)] $\dM(\Omega\cap\partial E_k,\Omega\cap\partial E_{k+1})>0$,
  \item[(c)] $\bigcap_k\overline{E_k}=\emptyset$.
 \end{enumerate}
\end{defn}

As with chains and ends for bounded domains in Section~\ref{sec:bdd-prime}, we define
divisibility of one chain by another, and say that two chains are equivalent if they divide
each other. An end is an equivalence class of chains, and a prime end is an end that is divisible
only by itself.

It was noted in~\cite[Remark~4.1.14]{E} that every end at $\infty$ is necessarily a prime end.

We now return to some mechanisms that make unbounded ends from Definition~\ref{def:type-ab-ends} 
work.

\begin{lemma}\label{lem:E1-E2-separation}
Let $E_1$ and $E_2$ be two open connected subsets of $\Omega$ such that $E_2\subset E_1$. 
Suppose in addition that $R$ is a
compact subset of $X$ such that no component of $\Omega\setminus R$ contain points from both
$\Omega\cap\partial E_1$ and $\Omega\cap\partial E_2$. 
Let $R_0:=R\cap\overline{E_1}\setminus E_2$. Then
$E_2$ is contained in a component $\widehat{C}$ of $\Omega\setminus R_0$ and $\widehat{C}\subset E_1$.
Moreover, no component of $\Omega\setminus R_0$ contains points from both
$\Omega\cap\partial E_1$ and $\Omega\cap\partial E_2$. 
\end{lemma}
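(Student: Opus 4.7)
My plan is to prove first the ``moreover'' clause—that no component of $\Omega\setminus R_0$ meets both $\Omega\cap\partial E_1$ and $\Omega\cap\partial E_2$—and then to deduce the main assertion $E_2\subset\widehat{C}\subset E_1$ from it. The preliminary observation I will make is that $R_0=R\cap\overline{E_1}\cap(X\setminus E_2)$ is closed in $X$ (as an intersection of closed sets) and disjoint from $E_2$ by construction; this gives $\Omega\setminus R_0$ open in $\Omega$, its components open by local path-connectedness of $X$, and $E_2$ being connected sits inside exactly one such component, which I take to be $\widehat{C}$.

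For the ``moreover'' clause I would argue by contradiction: suppose some component $C$ of $\Omega\setminus R_0$ contains both $x\in\Omega\cap\partial E_1$ and $y\in\Omega\cap\partial E_2$, and pick a path $\beta:[0,1]\to C$ with $\beta(0)=x$ and $\beta(1)=y$. The key construction is to take
\[
t_1:=\sup\{t\in[0,1] : \beta(t)\in\partial E_1\},\qquad t_2:=\inf\{t\ge t_1 : \beta(t)\in\partial E_2\},
\]
both attained since $\partial E_1$ and $\partial E_2$ are closed and $\beta(1)\in\partial E_2$ makes the second set non-empty. My goal will then be to show $\beta([t_1,t_2])\subset\overline{E_1}\setminus E_2$; since $R_0=R\cap(\overline{E_1}\setminus E_2)$ and the sub-path lies in $C\subset\Omega\setminus R_0$, this immediately forces $\beta([t_1,t_2])\cap R=\emptyset$, producing a continuum in $\Omega\setminus R$ that meets both $\Omega\cap\partial E_1$ and $\Omega\cap\partial E_2$ and contradicting the hypothesis on $R$.

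To verify the key inclusion, I would split two cases. When $t_1=t_2$, the single point $\beta(t_1)\in\partial E_1\cap\partial E_2$ already lies in $\overline{E_1}\setminus E_2$. When $t_1<t_2$ I plan to exploit the openness of $E_1$ and $E_2$ twice. First, since $\beta$ avoids $\partial E_1$ on $(t_1,1]$ and $\beta(1)\in\overline{E_1}\setminus\partial E_1=E_1$, the connectedness of $(t_1,1]$ together with the partition $\Omega\setminus\partial E_1=E_1\sqcup(\Omega\setminus\overline{E_1})$ yields $\beta((t_1,1])\subset E_1$, so $\beta([t_1,t_2])\subset\overline{E_1}$. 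Second, $t_2>t_1$ gives $\beta(t_1)\notin\partial E_2$, and combined with $\beta(t_1)\in\partial E_1\subset\Omega\setminus E_1\subset\Omega\setminus E_2$ this places $\beta(t_1)$ in the open set $\Omega\setminus\overline{E_2}$; continuity then puts $\beta$ in $E_1\setminus\overline{E_2}$ just after $t_1$, and connectedness of $(t_1,t_2)$ with the partition $E_1\setminus\partial E_2=E_2\sqcup(E_1\setminus\overline{E_2})$ forces $\beta((t_1,t_2))\subset E_1\setminus\overline{E_2}$, whence $\beta([t_1,t_2])\cap E_2=\emptyset$.

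With the moreover clause established, I would conclude $\widehat{C}\subset E_1$ by a short path argument: if some $z\in\widehat{C}$ lay outside $E_1$, a path in $\widehat{C}$ from $z$ to any chosen $p\in E_2$ would cross $\Omega\cap\partial E_1$ (since $p\in E_1$ and $z\notin E_1$) and then, upon entering $E_2$, also cross $\Omega\cap\partial E_2$, placing both kinds of boundary points in $\widehat{C}$ in violation of what was just proved. The hard part will be the openness-and-connectedness bookkeeping that confines $\beta([t_1,t_2])$ to the ``annulus'' $\overline{E_1}\setminus E_2$, in particular handling the boundary case $t_1=t_2$ and carefully tracking that the sub-path stays inside $\overline{E_1}$ but outside $\overline{E_2}$ on the crucial interval.
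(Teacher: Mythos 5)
Your proof is correct and follows essentially the same route as the paper's: both arguments take a path inside a component of $\Omega\setminus R_0$ joining the two boundaries, trim it to a subpath lying in $\overline{E_1}\setminus E_2$ (so that missing $R_0$ forces it to miss $R$, contradicting the separation hypothesis), and then deduce $\widehat{C}\subset E_1$ by a second path-crossing argument. Your explicit $\sup/\inf$ choice of $t_1,t_2$ is simply a more careful rendering of the paper's ``considering a subcurve of $\gamma$ and modifying $z$ and $w$ if necessary,'' and it has the minor virtue of not invoking the unproved inclusion $\Omega\cap\partial E_2\subset E_1$ that the paper's version uses.
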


\begin{proof}
Let $C_1$ be the union of all the components of $\Omega\setminus R_0$ that contain 
points from $\Omega\cap\partial E_1$, and let $C_2$ be the union of all the components of
$\Omega\setminus R_0$ that contain points from $\Omega\cap\partial E_2$.

Since $\Omega\cap\partial E_2\subset C_2$ and $C_2$ is open, it follows that $C_2\cup E_2$ is
a connected subset of $\Omega\setminus R_0$. So there is a component $\widehat{C}$ of 
$\Omega\setminus R_0$ such that $C_2\cup E_2\subset\widehat{C}\subset \Omega\setminus R_0$,
and hence $C_2=\widehat{C}\supset E_2$.

Let $U$ be a component of $\Omega\setminus R_0$ 
such that $U\subset C_1$. If $U\cap\widehat{C}$ is non-empty, then $U=\widehat{C}$, and in
this case there are two points $z\in U\cap\partial E_1$ and $w\in\widehat{C}\cap\partial E_2$
such that $z,w\in U$. As $U$ is an open and connected subset of $\Omega$, we can find a curve
$\gamma:[0,1]\to U$ with $\gamma(0)=z$ and $\gamma(1)=w$. We can assume, by considering a 
subcurve of $\gamma$ and modifying $z$ and $w$ if necessary, that 
$\gamma((0,1))\cap\partial E_1$ and $\gamma((0,1))\cap\partial E_2$ are empty.
As $\Omega\cap\partial E_2\subset E_1$, it follows that 
$\gamma((0,1))\subset \overline{E_1}\setminus E_2$.
By the property of separation that $R$ has, we must have $\gamma((0,1))\cap R$ non-empty.
Hence $\gamma((0,1))$ intersects $R\cap \overline{E_1}\setminus E_2=R_0$. This contradicts the
fact that $\gamma\subset U$ with $U\cap R_0$ empty. We therefore conclude that $C_1\cap\widehat{C}$
is empty.


Finally we are ready to prove that $\widehat{C}\subset E_1$. Suppose there is a point 
$z\in\widehat{C}\setminus E_1$. Since $E_2\subset\widehat{C}$ and
$\widehat{C}$ is open and connected, there is a point $w\in E_2\subset E_1$ and a curve $\beta\subset\widehat{C}$
connecting $z$ to $w$. As $z\not\in E_1$, it follows that $\gamma$ intersects $\Omega\cap\partial E_1$,
which violates the conclusion reached in the previous paragraph that $\Omega\cap\partial E_1$ does
not intersect $\widehat{C}$. Hence such $z$ cannot exist, whence it follows that $\widehat{C}\subset E_1$.
\end{proof}

As mentioned above, by~\cite[Section~4.2]{E}, understanding prime ends for unbounded domains
can be done by first sphericalizing them and transforming them into bounded domains and then
studying the prime ends of the transformed domains. There is a metric analog of reversing
sphericalizations, called flattening.  
However, the category of BQS homeomorphisms need not be preserved by the
sphericalization and flattening transformations, see the example below.  Therefore
sphericalization and flattening may not provide as useful a tool as we like in the study
of BQS maps.

\begin{ex}[BQS Sphere $\nRightarrow$ BQS Plane]
Let $f:\C^*=\C\setminus\{0\}\to\C^*$ be given by $f(z)=1/z$. The push-forward of $f$ under
sphericalization of $C^*$ to the twice-punctured sphere gives a BQS homeomorphism, but
$f$ is not a BQS homeomorshim from $C^*$ to itself, for the families of pairs of continua
$E_r:=[r,1]$ and $F_r:=\mathbb{S}^1$, $0<r<1/10$, have the property that
\[
\frac{\diam(E_r)}{\diam(F_r)}\approx 1, \qquad \frac{\diam(f(E_r))}{\diam(f(F_r))}\approx \frac{1}{r}.
\]
\end{ex}

\begin{ex}[BQS Plane $\nRightarrow$ BQS Sphere]\label{ex:not-BQS-Plane-Sphere}
For each positive integer $k$ let 
\begin{equation*}
\begin{split}
A_k = (\{\tfrac{1}{2^k}\} \times [1, 2^k]) \cup ([\tfrac{1}{2^{k+1}}, \tfrac{1}{2^k}]) \times \{2^k\}) \cup (\{\tfrac{1}{2^{k+1}}\} \times [0, 2^k]) \\
\cup ([\tfrac{1}{2^{k+2}}, \tfrac{1}{2^{k+1}}] \times \{0\}) \cup (\{\tfrac{1}{2^{k+2}}\} \times [0, 1]),
\end{split}
\end{equation*}
let $\eps_k = \tfrac{1}{64^k}$, $\ell(A_k)=2^{k+1}+\tfrac{3}{2^{k+2}}$ the length of $A_k$, and let
\[
\Omega:=\bigcup_{j\in\N}\bigcup_{z\in A_{2j}}Q(z,\eps_{2j}),
\]
where, for $z\in\C$, the cube 
$Q(z,t)=(\text{Re}(z)-t,\text{Re}(z)+t)\times(\text{Im}(z)-t,\text{Im}(z)+t)$. 
\begin{figure}[!htb]
\begin{center}
\begin{tikzpicture}
\draw[black, thick] (-1,1) -- (-1, 4);
\draw[black, thick] (-1,4) -- (-1.5, 4);
\draw[black, thick] (-1.5,4) -- (-1.5, 0);
\draw[black, thick] (-1.5,0) -- (-1.75, 0);
\draw[black, thick] (-1.75,0) -- (-1.75, 1);

\draw[black, dashed] (1,1) -- (1, 4);
\draw[black, dashed] (1,4) -- (.5, 4);
\draw[black, dashed] (.5,4) -- (.5, 0);
\draw[black, dashed] (.5,0) -- (.25, 0);
\draw[black, dashed] (.25,0) -- (.25, 1);

\draw[black] (1.0625,0.9375) -- (1.0625, 4.0625);
\draw[black] (1.0625,4.0625) -- (.4375, 4.0625);
\draw[black] (.4375,4.0625) -- (.4375, 0.0625);
\draw[black] (.4375,0.0625) -- (.3125, 0.0625);
\draw[black] (.3125,0.0625) -- (.3125, 1.0625);

\draw[black] (.9375,0.9375) -- (.9375, 3.9375);
\draw[black] (.9375,3.9375) -- (.5625, 3.9375);
\draw[black] (.5625,3.9375) -- (.5625, -.0625);
\draw[black] (.5625,-.0625) -- (.1875, -.0625);
\draw[black] (.1875,-.0625) -- (.1875, 1.0625);
\end{tikzpicture}
\end{center}
\caption{$A_k$ and its cube neighborhood}
\end{figure}
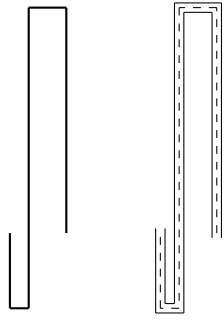

For $m \geq 1$, let $a_m = \sum_{j=1}^{m} \ell(A_{2j})$,  
and set $a_0 = 0$. 
Let 
\[
I_m = [a_{m-1}, a_m]=[a_{m-1}, a_{m-1}+\ell(A_{2m)}], \ I = [0, \infty).
\]  
Let $\Omega^\prime = \bigcup_{m=1}^\infty I_m \times (-\eps_{2m}, \eps_{2m})$.  
Then, there is a BQS map $f \colon \Omega^\prime \to \Omega$ defined  
as follows.  On $I$, this map is the arclength parametrization 
of the curve $\bigcup_k A_{2k}$.  On most of $\Omega^\prime$, this map is defined similarly; 
for $m\in\N$ and $\eps_{2m+2}\le\tau<\eps_{2m}$, the line 
$(0,a_m)\times\{\tau\}$ is mapped by $f$ as a parametrization to a curve that is an 
$\ell^\infty$-distance $\tau$ ``above" the curve $\bigcup_kA_{2k}$ (the dashed straight line gets
mapped to the dashed bent line above $f(I)$ in Figure~2 below).
This parametrization is mostly an arclength parametrization, 
although near the corners of $\Omega$ we change speed of this parametrization by an 
amount that is determined by $\tau$. We can ensure that this speed is at least $1/128$
and $128$.
For example, in the picture below, the dashed 
curve on the left is mapped to the dashed curve on the right by $f$ at a 
speed that is the ratio of the lengths of the dashed curve on the right and the 
middle curve on the right.

\begin{figure}[!htb]
\begin{center}
\begin{tikzpicture}
\draw[black, thick] (-10, 1.5) -- (-6, 1.5);
\draw[black, thick] (-10, -.5) -- (-6, -.5);
\draw[black, dashed] (-10, 1) -- (-6, 1);
\draw[black] (-10, .5) -- (-6, .5);

\node at (-3.5,.875) {$f$};
\node at (-5.8,.5) {$I$};
\node at (2.4,0) {$f(I)$};

\draw[->] (-5,.5) -- (-2,.5);

\draw[black] (0,0) -- (2,0);
\draw[black] (0,2) -- (0,0);
\draw[black, thick] (1,1) -- (2,1);
\draw[black, thick] (1,2) -- (1,1);
\draw[black, thick] (-1,-1) -- (-1, 2);
\draw[black, thick] (-1,-1) -- (2, -1);
\draw[black, dashed] (.5, .5) -- (2, .5);
\draw[black, dashed] (.5, .5) -- (.5, 2);
\end{tikzpicture}
\end{center}
\caption{Fiber map of $f$ near corners}
\end{figure}
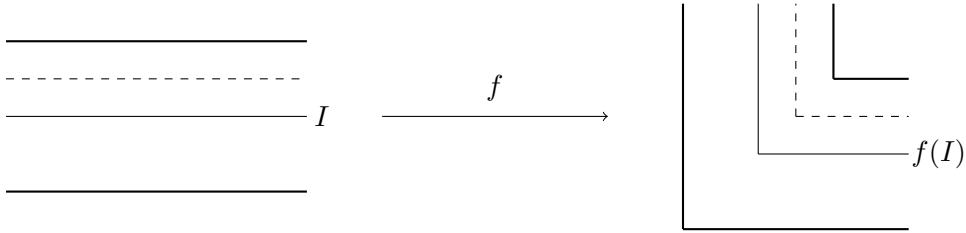

By using $\epsilon_k$ much smaller than $2^k$ (as $k \geq 2$) and changing the ``tube size'' at $y=1$, this guarantees that near each of these corners the map is biLipschitz and hence BQS there.  
The arclength parameterization guarantees that the map is BQS at 
large scales, and hence BQS.   

After applying the stereographic projection $\varphi$ (projection to the sphere), however, 
there is some $C>0$ such that
for any $i < j$ we have $\diam(\varphi(\cup_{k=i}^j A_{2k}))\ge C$, whereas 
$\diam(\varphi(f(\cup_{k=i}^\infty A_{2k})) \to 0$ as $i \to \infty$. Hence the induced map 
between $\varphi(\Omega)$ and $\varphi(\Omega^\prime)$ is not BQS.

\end{ex}

Now we are ready to study boundary behavior of BQS homeomorphisms between two
unbounded domains.

Let $\Omega,\Omega^\prime$ be two open, connected subsets of metric spaces $X,Y$ respectively, with
$X$ and $Y$ both proper and locally path-connected metric spaces. Let $f:\Omega\to\Omega^\prime$
be a BQS homeomorphism.

In light of Lemma~\ref{lem:bdd-to-bdd}, we now concentrate on the case of both $\Omega$ and $\Omega^\prime$ being
unbounded. So henceforth, we assume that $\Omega$ and $\Omega^\prime$ are unbounded domains in
proper, locally path-connected metric spaces and that $f:\Omega\to\Omega^\prime$ is a BQS homeomorphism.
Recall that we have now to deal with three types of prime ends.

\begin{defn}[{\bf Prime end types}]
We categorize the three types of ends as follows:
\begin{enumerate}
\item[(a)] Ends $[\{E_k\}]$ for which there is some positive integer $k$ with $\overline{E_k}$ compact,
\item[(b)] Ends $[\{E_k\}]$ for which each $E_k$ is unbounded and $I[\{E_k\}]$ is non-empty,
\item[(c)] Ends that are ends at $\infty$; these are also unbounded.
\end{enumerate}
\end{defn}

\begin{remark}
A few words comparing prime ends with respect to the original metric $d_X$ on $\Omega$ and 
prime ends with respect to the induced Mazurkiewicz metric $\dum$ are in order here. 
A prime end of type~(c) with respect to $d_X$ will continue to be a prime end of 
type~(c) with respect to $\dum$. A prime end of type~(b) with respect to $d_X$ will be 
a prime end of type~(c) with respect to $\dum$, and so information about the impression
of such a prime end is lost. A singleton prime end of type~(a) with respect to $d_X$ will
continue to be a singleton prime end of type~(a) with respect to $\dum$. However, a
non-singleton prime end of type~(a) with respect to $d_X$ will cease to be a prime end
(and indeed, cease to be even an end) with respect to $\dum$. Indeed, if $[\{E_k\}]$ is a
prime end of type~(a) with respect to $d_X$ with $I[\{E_k\}]$ non-singleton, then
no point of $I[\{E_k\}]$ can be accessible from inside the chain $\{E_k\}$, as can be
seen from an easy adaptation of the proof of Lemma~\ref{lem:end-div-prime-finite-conn}.
Therefore $\bigcap_k\overline{E_k}^M$ is empty, and so the sequence $\{E_k\}$ fails the 
definition of an end with respect to $\dum$.
\end{remark}

\begin{lemma}\label{lem:end-b-is-prime}
Suppose that $\overline{\Omega}^M$ is proper.
Fix $x_0\in\Omega$ and a strictly increasing sequence $\{n_k\}$ of positive real numbers with $\lim_kn_k=\infty$.
Then, for each $k\in\N$ there is only finitely many unbounded components of $\Omega\setminus\overline{B_M(x_0,n_k)}$.
If for each $k\in\N$ we choose an unbounded component $F_k$ of $\Omega\setminus\overline{B_M(x_0,n_k)}$
such that $F_{k+1}\subset F_k$, then $\{F_k\}$ is a chain for $\Omega$ such that $[\{F_k\}]$ is a prime end of $\Omega$.
\end{lemma}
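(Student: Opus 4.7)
The plan is threefold: (i) finiteness of unbounded components, (ii) verification of the chain axioms for $\{F_k\}$, and (iii) primality of $[\{F_k\}]$. The principal obstacle will be the primality step.

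For (i), fix $k$ and suppose toward contradiction that $\Omega\setminus\overline{B_M(x_0,n_k)}$ has infinitely many unbounded components $U_1,U_2,\ldots$. Since each $U_j$ is connected and $\dum$-unbounded, by continuity of $\dum(x_0,\cdot)$ we may pick $z_j\in U_j$ with $\dum(x_0,z_j)=n_k+\tfrac12$. The sequence $\{z_j\}$ lies in the $\dum$-closed ball of radius $n_k+1$ about $x_0$ in $\overline{\Omega}^M$, which is compact by hypothesis, so a subsequence $\{z_{j_i}\}$ is $\dum$-Cauchy. For large $i$, the definition of $\dum$ supplies continua $\Gamma_i\subset\Omega$ with $z_{j_i},z_{j_{i+1}}\in\Gamma_i$ and $\diam(\Gamma_i)<\tfrac14$. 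Every point of $\Gamma_i$ then has $\dum(x_0,\cdot)>n_k$, so $\Gamma_i\subset\Omega\setminus\overline{B_M(x_0,n_k)}$; but $\Gamma_i$ is connected and meets both $U_{j_i}$ and $U_{j_{i+1}}$, a contradiction.

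For (ii), I check the axioms of Definition~\ref{def:type-ab-ends}. Axiom (a) is given. For (b), pick $r_k,r_k'$ with $n_k<r_k<r_k'<n_{k+1}$ and let $R_k$ be the $d_X$-closure in $X$ of the Mazurkiewicz annular shell $\{y\in\Omega:r_k<\dum(x_0,y)<r_k'\}$; since $d_X\le\dum$ on $\Omega$ and $X$ is proper, $R_k$ is compact, and a local path-connectedness argument shows $R_k\cap\Omega\subset\{r_k\le\dum(x_0,\cdot)\le r_k'\}$, giving $\diam_M(R_k\cap\Omega)\le 2r_k'<\infty$. The open Mazurkiewicz ball $\{\dum(x_0,\cdot)<r_k\}$ (connected, because any continuum realizing $\dum(x_0,y)<r_k$ stays in the ball) and the component of $\{\dum(x_0,\cdot)>r_k'\}$ containing $F_{k+1}$ are distinct components of $\Omega\setminus R_k$ containing $\Omega\cap\partial F_k$ and $\Omega\cap\partial F_{k+1}$ respectively. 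Axiom (c) follows from the triangle inequality: $\dM(\Omega\cap\partial F_k,\Omega\cap\partial F_{k+1})\ge n_{k+1}-n_k>0$. For (d), any $z\in\bigcap_k\overline{F_k}$ is a $d_X$-limit of points whose $\dum$-distances to $x_0$ tend to infinity; if $z\in\Omega$, local path-connectedness would give $\dum(x_0,z)<\infty$, a contradiction; hence the impression lies in $\partial\Omega$, and if it happens to be empty, $\{F_k\}$ is a chain at infinity in the sense of Definition~\ref{def:type-c-ends} with defining compacta the $d_X$-closures of the Mazurkiewicz balls $B_M(x_0,n_k)$.

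For (iii), suppose $\{G_k\}\,|\,\{F_k\}$, so $G_{i_k}\subset F_k$ for each $k$. I first rule out that $\overline{G_m}$ is compact for any $m$: as a connected set with compact closure, $G_m$ would have $\diam_M(G_m)=\diam(G_m)<\infty$ and lie in some $\overline{B_M(x_0,C)}$; but $G_{i_k}\subset G_m\cap F_k$ (for $i_k\ge m$) forces $\dum(x_0,\cdot)>n_k$ on $G_{i_k}$, which is untenable for $n_k>C$. Hence each $G_k$ is unbounded, equipped with either a compact separator $R^G_k$ of finite $\dum$-diameter on $\Omega$ (Definition~\ref{def:type-ab-ends}(b)) or a defining compact $K_k$ with $G_k$ a component of $\Omega\setminus K_k$ and $\diam_M(K_k\cap\Omega)<\infty$ (Definition~\ref{def:type-c-ends}). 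Fix $m$. In the separator case, Lemma~\ref{lem:E1-E2-separation} applied to $G_{m+1}\subset G_m$ with separator $R^G_m$ yields $G_{m+1}\subset\widehat{C}\subset G_m$ for some component $\widehat{C}$ of $\Omega\setminus R_0$ with $R_0\subset R^G_m$; for $j$ with $n_j>\sup_{y\in R^G_m\cap\Omega}\dum(x_0,y)$ and $i_j\ge m+1$, $F_j\cap R_0=\emptyset$, and $G_{i_j}\subset\widehat{C}\cap F_j$ is nonempty, so connectedness of $F_j$ forces $F_j\subset\widehat{C}\subset G_m$. In the at-infinity case, $\Omega\cap\partial G_m\subset K_m\cap\Omega$ is $\dum$-bounded, so for $j$ large $F_j$ misses $\Omega\cap\partial G_m$; combined with $G_{i_j}\subset G_m\cap F_j\ne\emptyset$ and connectedness, $F_j\subset G_m$. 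In either case $\{F_k\}\,|\,\{G_k\}$, proving $[\{F_k\}]$ is prime. The hard part is this last step: Lemma~\ref{lem:E1-E2-separation} furnishes the nested containment $G_{m+1}\subset\widehat{C}\subset G_m$, and the finite $\dum$-diameter of $R^G_m\cap\Omega$ is precisely what lets the ``far-away in $\dum$'' set $F_j$ slip past the separator into the correct component.
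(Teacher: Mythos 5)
Your proposal is correct in substance and rests on the same two pillars as the paper's proof: properness of $\overline{\Omega}^M$ forces finiteness of the unbounded components (points in distinct components of $\Omega\setminus\overline{B_M(x_0,n_k)}$ are uniformly $\dum$-separated, so an infinite family would produce a bounded sequence with no convergent subsequence), and primality exploits the tension between the finite Mazurkiewicz diameter of the data attached to a competing chain $\{G_k\}$ and the fact that the sets $F_j$ live at Mazurkiewicz distance greater than $n_j\to\infty$ from $x_0$. The organization differs in two places, both to your credit in terms of completeness. For the chain axioms you explicitly build the separating compacta of Definition~\ref{def:type-ab-ends}(b) as closures of Mazurkiewicz annular shells and check their compactness and finite $\dum$-diameter; the paper only verifies the separation inequality (c) and leaves the remaining conditions implicit. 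For primality the paper argues by contradiction: if $\{F_k\}$ fails to divide $\{G_k\}$, then a set of finite Mazurkiewicz diameter ($\Omega\cap\partial G_{k_0}$ in the end-at-infinity case, the separator in the type~(b) case) must meet every $F_j$ and hence have infinite Mazurkiewicz diameter. You argue directly, using Lemma~\ref{lem:E1-E2-separation} to trap $F_j$ inside the component $\widehat{C}\subset G_m$ once $n_j$ exceeds $\sup_{y\in R^G_m\cap\Omega}\dum(x_0,y)$; the two arguments are mirror images of the same mechanism. One caveat: in excluding chains of type~(a) you invoke $\diam_M(G_m)=\diam(G_m)$ for a connected set with compact closure, but the identity established in (the proof of) Proposition~\ref{prop:BQS-Mazur} applies to continua contained in $\Omega$, whereas $\overline{G_m}$ necessarily meets $\partial\Omega$, so the cited identity does not directly cover this case. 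The paper asserts the same exclusion with even less justification, so this is a shared soft spot rather than a defect peculiar to your argument, but the step deserves an independent justification.
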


\begin{proof}
To see the veracity of the first claim, note that if there are infinitely many unbounded components of 
$\Omega\setminus\overline{B_M(x_0,n_k)}$, then there are infinitely many components of $\Omega\setminus\overline{B_M(x_0,n_k)}$
that intersect $B_M(x_0,n_{k+2})\cap\Omega$, in which case we can extract out a sequence of points $\{x_j\}$ from 
$B_M(x_0,n_{k+1})\cap\Omega\setminus B_M(x_0,n_{k+1})$ such that 
for $j\ne i$ we have $2n_{k+2}\ge \dum(x_j,x_i)\ge n_{k+2}-n_{k+1}>0$, and this sequence will not have any subsequence converging
in $\overline{\Omega}^M$, violating the properness of $\overline{\Omega}^M$.

Given the first claim, it is not difficult to see that we can choose a sequence $\{F_k\}$ as in the second hypothesis of the lemma.
Since 
\[
\dM(\Omega\cap\partial F_k,\Omega\cap\partial F_{k+1})\ge 
\dM(\Omega\cap\partial B_M(x_0,n_k),\Omega\cap\partial B_M(x_0,n_{k+1}))\ge n_{k+1}-n_k>0,
\]
it follows that $\{F_k\}$ is a chain of type~(b) if $\bigcap_k\overline{F_k}$ is non-empty, and is a chain of type~(c) (chain at
$\infty$) if $\bigcap_k\overline{F_k}$ is empty. In this latter case, we already have that $[\{F_k\}]$ is prime.
Hence suppose that $\bigcap_k\overline{F_k}$ is non-empty, and suppose that $\{G_k\}\, |\, \{F_k\}$ for some
chain $\{G_k\}$ of $\Omega$. By passing to a subsequence of $\{G_k\}$ if necessary, we can then assume that
$G_k\subset F_k$ for all $k\in\N$. We need to show that $\{F_k\}\, |\, \{G_k\}$. Suppose this is not the case. Then
there is some positive integer $k_0\ge 2$ such that for each $j\in\N$ the set $F_j\setminus G_{k_0}$ is non-empty. Recall that
$G_k\subset F_k$ for each $k\in\N$. Therefore, it follows from the nested property of $\{G_k\}$, that both
$F_j\setminus G_{k_0}$ and $F_j\cap G_{k_0}$ are non-empty for each $j\in\N$. This immediately implies that
$\{G_k\}$ cannot be a chain of type~(a).

Suppose that $\{G_k\}$ is a chain of type~(c).
As $F_j$ is an open connected
subset of $\Omega$, it follows that $\Omega\cap\partial G_{k_0}$ must intersect $F_j$. Hence, for each $j\in\N$ we have
that $\Omega\cap\partial G_{k_0}\cap F_j\ne\emptyset$, which violates the requirement that 
$\diam_M(\Omega\cap\partial G_{k_0})<\infty$. Therefore we have that $\{G_k\}$ cannot be of type~(c).
If $\{G_k\}$ is of type~(b), then as $R_k$ separates $\Omega\cap\partial G_{k_0-1}$ from $\Omega\cap\partial G_{k_0}$,
we must have that for each $j\in\N$, $R_k$ intersects $F_j$. This again violates the requirement that 
$\diam_M(R_k\cap\Omega)<\infty$. 

Therefore we must have that $\{F_k\}\, |\, \{G_k\}$ as well. Hence $[\{F_k\}]$ is a prime end.
\end{proof}

\begin{lemma}\label{lem:bdd-end}
If $\ms{E}$ is a bounded prime end of $\Omega$, then $f(\ms{E})$ is a bounded prime end of $\Omega^\prime$. 
Moreover, if $\ms{E}$ is a singleton prime end (that is, its impression contains only one point), then 
$f(\ms{E})$ is also a singleton prime end.
\end{lemma}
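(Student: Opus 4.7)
The plan is to promote the chain-mapping argument of Theorem~\ref{thm:BQS-fp-homeo} to the unbounded setting. Given $\ms{E}=[\{E_k\}]$ a bounded prime end, so that $\overline{E_K}$ is compact for some $K$, I will verify that $\{f(E_k)\}_{k\ge K}$ is a chain of $\Omega^\prime$ of type~(a) in the sense of Definition~\ref{def:type-ab-ends} and that $[\{f(E_k)\}]$ is prime; the singleton statement then follows directly from Lemma~\ref{lem:BQS-single-to-single}.

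The key preliminary is the claim: if $U\subset\Omega$ is connected with $\overline{U}$ compact in $X$, then $\diam_M(U)<\infty$. To establish this I would let $V=\{x\in X\colon d_X(x,\overline{U})<1\}$; this is bounded, so $\overline{V}$ is compact by properness of $X$, and $\diam(V)\le\diam(\overline{U})+2$. Since $X$ is locally connected (as it is locally path-connected), components of the open set $V\cap\Omega$ are open, so there is an open connected set $W\subset\Omega$ containing $U$ with $\diam(W)\le\diam(V)<\infty$. Local path-connectedness of $X$ makes $W$ path-connected, so any two points of $U$ are joined by a continuum in $W\subset\Omega$ of diameter at most $\diam(W)$, giving $\diam_M(U)\le\diam(\overline{U})+2$.

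With this preliminary in hand, I would apply Lemma~\ref{lem:bdd-to-bdd} to $E_k$ ($k\ge K$) to obtain $\diam_M^\prime(f(E_k))<\infty$, whence $f(E_k)$ is bounded in $Y$ and $\overline{f(E_k)}$ is compact by properness of $Y$. The chain conditions for $\{f(E_k)\}_{k\ge K}$ are then checked directly: nesting from $f$ being a homeomorphism; Mazurkiewicz separation of boundaries from Lemma~\ref{lem:maz-dist-bdry-positive}; the property $\overline{f(E_k)}\cap\partial\Omega^\prime\ne\emptyset$ by arguing that otherwise $f^{-1}(\overline{f(E_k)})$ would be a compact subset of $\Omega$ forcing $\overline{E_k}\subset\Omega$, contrary to $\overline{E_k}\cap\partial\Omega\ne\emptyset$; and $\bigcap_k\overline{f(E_k)}\subset\partial\Omega^\prime$ by continuity of $f^{-1}$ exactly as in the proof of Theorem~\ref{thm:BQS-fp-homeo}. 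Primality follows by the same scheme as in that theorem: any chain $\{G_k\}$ of $\Omega^\prime$ dividing $\{f(E_k)\}$ eventually lies in the bounded set $f(E_K)$ and is therefore of type~(a); applying the same reasoning to the BQS homeomorphism $f^{-1}$ produces a chain $\{f^{-1}(G_k)\}$ of $\Omega$ dividing $\{E_k\}$, and primality of $\ms{E}$ forces $\{G_k\}\sim\{f(E_k)\}$. The singleton claim is immediate from Lemma~\ref{lem:BQS-single-to-single}.

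The main obstacle I anticipate is the preliminary finiteness claim $\diam_M(U)<\infty$: although intuitively clear, it is not automatic from compactness of $\overline{U}$ because $\dum$-distance is measured by continua in $\Omega$, not in $U$ or in $\overline{U}$. The enlargement construction above handles this by fattening $U$ to an open connected neighborhood $W$ within $\Omega$ of controlled $X$-diameter, and uses properness and local (path-)connectedness of $X$ in an essential way.
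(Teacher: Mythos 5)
Your proposal is correct and follows essentially the same route as the paper's proof: nesting and the boundary-intersection/impression conditions are checked exactly as in Theorem~\ref{thm:BQS-fp-homeo}, separation comes from Lemma~\ref{lem:maz-dist-bdry-positive}, primality from running the argument through $f^{-1}$, and the singleton claim from Lemma~\ref{lem:BQS-single-to-single}. The one divergence is how you bound $f(E_k)$: the paper restricts $f$ to the open set $E_1$ (after Lemma~\ref{lem:open-end}) and invokes the first half of Lemma~\ref{lem:bdd-to-bdd} (bounded domains map to bounded domains), whereas you prove $\diam_M(E_k)<\infty$ by fattening $E_k$ inside $\Omega$ and then use the second half of that lemma --- both are sound, and your auxiliary claim is correctly argued.
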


\begin{proof}
Let $\{E_k\}\in\ms{E}$; by passing to a subsequence if need be, we can also assume that 
$\overline{E_1}$ is compact. Moreover, by Lemma~\ref{lem:open-end} 
we can also assume that each $E_k$ is open.
Then the restriction $f\vert_{E_1}$ is also a BQS homeomorphism onto $f(E_1)$, and it follows from
Lemma~\ref{lem:bdd-to-bdd} above that $f(E_1)$ is also bounded. Therefore for each positive integer
$k$ we have that $f(E_k)$ is bounded, and by the properness of $Y$ we also have that
$\overline{f(E_k)}$ is compact. Moreover, since $\overline{E_k}\cap\partial\Omega$ is non-empty,
we can find a sequence $x_m$ of points in $E_k$ converging to some 
$x\in \overline{E_k}\cap\partial\Omega$; since $f$ is a homeomorphism, it follows that
$f(x_m)$ cannot converge to any point in $\Omega^\prime$, and so by the compactness of
$\overline{f(E_k)}$ we have a subsequence converging to a point in 
$\overline{f(E_k)}\cap\partial\Omega^\prime$, that is, 
$\overline{f(E_k)}\cap\partial\Omega^\prime$ is non-empty.
A similar argument also shows that $\bigcap_k\overline{f(E_k)}$ is non-empty and is a 
subset of $\partial\Omega^\prime$.

From Lemma~\ref{lem:maz-dist-bdry-positive} 
we know that for each positive integer $k$,
\[
\dM^\prime(\Omega^\prime\cap\partial f(E_k),\Omega^\prime\cap\partial f(E_{k+1}))>0.
\]
Therefore $f(\ms{E})$ is an end of $\Omega^\prime$.

Next, if $\ms{F}$ is an end that divides $f(\ms{E})$, then with $\{F_k\}\in\ms{F}$ we must have that
$F_k$ is bounded (and hence, $\overline{F_k}$ is compact) for sufficiently large $k$. The above argument,
applied to $\{F_k\}$ and the BQS homeomorphism $f^{-1}$ tells us that $f^{-1}(\ms{F})$ is an end of
$\Omega$ dividing the prime end $\ms{E}$; it follows that $f^{-1}(\ms{F})=\ms{E}$, and so 
$\ms{F}=f(\ms{E})$, that is, $f(\ms{E})$ is a prime end of $\Omega^\prime$.

The last claim of the theorem follows from Lemma~\ref{lem:BQS-single-to-single}.
%
\end{proof}

\begin{lemma}\label{lem:type-b}
Let $\ms{E}$ be a prime end of $\Omega$ of type~(b). Then $f(\ms{E})$ is a prime end of type~(b) for
$\Omega^\prime$ or there is an end $\ms{F}$ at $\infty$ for $\Omega^\prime$ such that
with $\{E_k\}\in\ms{E}$ there exists $\{F_k\}\in\ms{F}$ with $F_{k+1}\subset f(E_k)\subset F_k$
for each positive integer $k$.
\end{lemma}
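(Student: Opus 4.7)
The plan is to transport the chain $\{E_k\}$ through $f$, analyze the image sequence $\{f(E_k)\}$ in $\Omega'$, and dichotomize on whether its impression $\bigcap_k\overline{f(E_k)}$ in $Y$ is empty. Fix a representative $\{E_k\}\in\ms{E}$ with each $E_k$ open (Lemma~\ref{lem:open-end}). Each $f(E_k)$ is unbounded in $d_Y$ by Lemma~\ref{lem:bdd-to-bdd} applied to $f\vert_{E_k}$ and its inverse (both BQS); nestedness is transparent; $\dM'(\Omega'\cap\partial f(E_k),\Omega'\cap\partial f(E_{k+1}))>0$ by Lemma~\ref{lem:maz-dist-bdry-positive}; and $\bigcap_k\overline{f(E_k)}\subset\partial\Omega'$ by the homeomorphism argument from the proof of Theorem~\ref{thm:BQS-fp-homeo}. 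Set $K_k:=\overline{f(R_k\cap\Omega)}^Y$; since $R_k\cap\Omega$ is closed in $\Omega$ with finite $\dum$-diameter, Lemma~\ref{lem:bdd-to-bdd} makes $f(R_k\cap\Omega)$ closed in $\Omega'$ and $d_Y$-bounded, so $K_k$ is compact in the proper $Y$ and $K_k\cap\Omega'=f(R_k\cap\Omega)$ has finite $\dumm$-diameter. Thus $\Omega'\setminus K_k=f(C_{1,k})\sqcup f(C_{2,k})$, and a clopen-in-connected argument (the set $E_k\cap C_{1,k-1}$ would otherwise be all of $C_{1,k-1}$ and force $\partial E_{k-1}\subset E_k\subset E_{k-1}$, contradicting openness) together with its symmetric counterpart for $E_k\cap C_{2,k}$ yields the two topological inclusions $E_k\subset C_{2,k-1}\cup R_{k-1}$ and $C_{2,k}\subset E_k$, valid for every $k$.

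In Case~A, where $\bigcap_k\overline{f(E_k)}\neq\emptyset$, this intersection lies in $\partial\Omega'$, each $f(E_k)$ is acceptable, and the $K_k$'s supply condition~(b) of Definition~\ref{def:type-ab-ends}, so $\{f(E_k)\}$ is a type~(b) chain. Primeness of $f(\ms{E})=[\{f(E_k)\}]$ follows by running the same transport for the BQS inverse $f^{-1}$ and invoking primeness of $\ms{E}$, exactly as in Theorem~\ref{thm:BQS-fp-homeo}: any $\{G_k\}\mid\{f(E_k)\}$ pulls back to $\{f^{-1}(G_k)\}\mid\{E_k\}$, hence is equivalent to $\{E_k\}$, so $\{G_k\}\sim\{f(E_k)\}$.

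Case~B ($\bigcap_k\overline{f(E_k)}=\emptyset$) is the delicate one. For each fixed $n$, the nested nonempty compacts $\overline{f(E_j)}\cap K_n$ have empty intersection, so by the finite intersection property $\overline{f(E_j)}\cap K_n=\emptyset$ for $j$ large, equivalently $E_j\cap R_n=\emptyset$ for $j$ large. I would inductively pick $j(1)<j(2)<\cdots$ so that $E_{j(k+1)}\cap R_{j(k)}=\emptyset$, replace $\{E_k\}$ by this equivalent subchain (still a representative of $\ms{E}$), relabel, and set $F_k:=f(C_{2,k-1})$ for $k\ge 2$ (with $F_1$ any enclosing component of $\Omega'\setminus K_0$ using a suitable initial $R_0$). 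Each $F_k$ is a component of $\Omega'\setminus K_{k-1}$, hence acceptable at infinity; the enforced $E_k\cap R_{k-1}=\emptyset$ upgrades $E_k\subset C_{2,k-1}\cup R_{k-1}$ to $E_k\subset C_{2,k-1}$, giving $f(E_k)\subset F_k$, while $C_{2,k}\subset E_k$ gives $F_{k+1}=f(C_{2,k})\subset f(E_k)$; this is the required sandwich. Nestedness $F_{k+1}\subset F_k$ is $C_{2,k}\subset C_{2,k-1}$ (forced by the sandwich), empty impression of $\{F_k\}$ is forced by the Case~B hypothesis through the same sandwich, and positive $\dM'$-separation of consecutive $\partial F_k\subset f(R_{k-1}\cap\Omega)$ is the remaining technicality, obtained by further thinning the indices $j(k)$ so that consecutive separators $R_{j(k-1)},R_{j(k)}$ are separated in $\Omega$ by a bounded nested pair of open sets to which Lemma~\ref{lem:maz-dist-bdry-positive} can be applied. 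Since every end at infinity is a prime end, $\ms{F}:=[\{F_k\}]$ is the desired end.

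The main obstacle throughout is the upper sandwich $f(E_k)\subset F_k$ in Case~B: without the empty-impression hypothesis, the separator $R_{k-1}$ can slice through $E_k$, so $K_{k-1}$ can slice through $f(E_k)$, and the natural candidate $F_k=f(C_{2,k-1})$ fails to contain $f(E_k)$. The Case~B hypothesis is precisely the tool that lets us escape each $K_n$ along a subchain of $\ms{E}$, restoring the upper containment, and the principal technical burden is coordinating the inductive choice of $j(k)$ so that all four chain-at-infinity conditions on $\{F_k\}$ survive simultaneously.
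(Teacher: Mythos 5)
Your overall architecture (the dichotomy on $\bigcap_k\overline{f(E_k)}$, the compacta $K_k=\overline{f(R_k\cap\Omega)}$, and sandwiching $f(E_k)$ between components of $\Omega^\prime\setminus K_k$) matches the paper's, but there is a genuine gap at the very first technical step: you invoke Lemma~\ref{lem:maz-dist-bdry-positive} to conclude $\dM^\prime(\Omega^\prime\cap\partial f(E_k),\Omega^\prime\cap\partial f(E_{k+1}))>0$. That lemma is stated, and its proof only works, for \emph{bounded} connected open sets $E\subset F$: the argument picks a continuum $A\subset F$ with $\diam(A)\le 2\diam(F)$ and feeds $\eta(4\diam(F)/\tau)$ into the BQS inequality, which is meaningless when $\diam(F)=\infty$. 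For a prime end of type~(b) every $E_k$ is unbounded, so the lemma simply does not apply; the paper flags exactly this point and instead anchors the BQS estimate at the separating compactum $R_k$: any continuum $\gamma$ joining the two boundary pieces must cross $R_k$, and since $\diam_M(R_k\cap\Omega)<\infty$ one attaches $f^{-1}(\gamma)$ to a fixed continuum $A$ meeting $R_k$ with $\diam(A)\le 2\diam_M(R_k\cap\Omega)$ and runs the BQS inequality with $\eta\bigl(4\diam_M(R_k\cap\Omega)/\tau\bigr)$. Without this (or an equivalent) argument, condition~(c) of Definition~\ref{def:type-ab-ends} for $\{f(E_k)\}$ is unproven, and this is the one place in the lemma where the BQS hypothesis genuinely has to be re-deployed rather than quoted.

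The same gap propagates into your Case~B: you defer the positive $\dM^\prime$-separation of consecutive $\Omega^\prime\cap\partial F_k$ to ``further thinning'' so that Lemma~\ref{lem:maz-dist-bdry-positive} applies to a ``bounded nested pair of open sets,'' but no such bounded pair is available---all the sets in sight are unbounded. The clean route (the paper's) is to note that once the sandwich $F_{k+1}\subset f(E_{2k+1})\subset f(E_{2k})\subset F_k$ is in place, any continuum from $\Omega^\prime\cap\partial F_k$ to $\Omega^\prime\cap\partial F_{k+1}$ must meet both $\Omega^\prime\cap\partial f(E_{2k})$ and $\Omega^\prime\cap\partial f(E_{2k+1})$, so the separation of the $\partial F_k$'s is inherited from the separation of the $\partial f(E_k)$'s established in the first step; passing to every other index is all the thinning that is needed. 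The remaining parts of your outline (the finite-intersection argument that $E_j$ eventually avoids each fixed $R_n$, the use of Lemma~\ref{lem:E1-E2-separation} to obtain the sandwich, the boundedness of $f(R_k\cap\Omega)$ via Lemma~\ref{lem:bdd-to-bdd}, and the primality transfer via $f^{-1}$) are sound and consistent with the paper.
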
 

\begin{proof}
Here we cannot invoke Lemma~\ref{lem:maz-dist-bdry-positive} as neither $E_k$ nor
$E_{k+1}$ is bounded; but the idea for the proof is similar, as we now show.
Suppose $E_1, E_2\subset\Omega$ be connected open sets with $E_2\subset E_1$ and 
$\tau:=\dM(\Omega\cap\partial E_1,\Omega\cap\partial E_2)>0$. Suppose also that there is a compact
set $R\subset X$ such that $\Omega\cap\partial E_1$ and $\Omega\cap\partial E_2$ belong to different
components of $\Omega\setminus R$ and that $\diam_M(R\cap\Omega)<\infty$. 
We fix a continuum $A\subset\Omega$ that intersects $R$ with $\diam(A)\le 2\diam_M(R\cap\Omega)$.
If $\gamma$ is a continuum in $\Omega$ intersecting both $\Omega\cap\partial E_1$ and 
$\Omega\cap\partial E_2$, then it must intersect $R$. Hence we can find a continuum
$\beta\subset\Omega$ intersecting both $\gamma\cap R$ and $A\cap R$ such that
$\diam_M(\beta)=\diam(\beta)\le 2\diam_M(R\cap\Omega)$. Now by the BQS property of $f$, we see that
\[
\frac{\diam(f(\beta\cup A))}{\diam(f(\gamma))}
\le \eta\left(\frac{\diam(\beta\cup A)}{\diam(\gamma)}\right)
\le \eta\left(\frac{\diam(\beta\cup A)}{\tau}\right).
\]
Note that $\beta$ and $A$ are intersecting continua with diameter at most $2\diam_M(R\cap\Omega)$. It follows
that $\diam(\beta\cup A)\le 4\diam_M(R\cap\Omega)<\infty$. Hence
\[
\frac{\diam(f(A))}{\diam(f(\gamma))}\le \eta\left(\frac{4\diam_M(R\cap\Omega)}{\tau}\right),
\]
that is,
\[
0<\frac{\diam(f(A))}{\eta\left(\frac{4\diam_M(R\cap\Omega)}{\tau}\right)}\le \diam(f(\gamma)).
\]
It follows that 
\[
\dM(\Omega^\prime\cap\partial f(E_1),\Omega^\prime\cap\partial f(E_2))>0.
\]
From the second claim of Lemma~\ref{lem:bdd-to-bdd}, we know that 
$f(R\cap\Omega)$ is bounded with respect to the Mazurkiewicz metric $\dumm$
and hence with respect to $d_Y$. From the fact that 
$f$ is a homeomorphism from $\Omega$ to $\Omega^\prime$,
it also follows that $\Omega^\prime\cap\partial f(E_1)$ and 
$\Omega^\prime\cap\partial f(E_2)$ belong to different components of 
$\Omega^\prime\setminus f(R\cap\Omega)$.

The properness of
$Y$ together with the fact that $\diam_{d_Y}(f(\Omega\cap R))\le \diam_M^\prime(f(\Omega\cap R))<\infty$
tells us that $\overline{f(\Omega\cap R)}$ is compact in $Y$.

From the above argument, we see that $f(\ms{E})$ is an end of $\Omega^\prime$
provided that $\bigcap_k\overline{f(E_k)}$ is non-empty.
Similar argument as at the end of the proof of Lemma~\ref{lem:bdd-end} then tells us that $f(\ms{E})$ is a
prime end. 

Suppose  now that $\bigcap_k\overline{f(E_k)}\ne\emptyset$.
To see that $f(\ms{E})$ is
of type~(b), note that with $\{E_k\}\in\ms{E}$, each $E_k$ is unbounded (and without loss of generality,
open, see Lemma~\ref{lem:open-end}). Hence by Lemma~\ref{lem:bdd-to-bdd} we know that $f(E_k)$ is unbounded.
This completes the proof in the case that $\bigcap_k\overline{f(E_k)}$ is non-empty.

Finally, suppose that $\bigcap_k\overline{f(E_k)}$ is empty. We construct the 
end at $\infty$, $\ms{F}$, as follows. 
By Lemma~\ref{lem:E1-E2-separation}, we can assume that the separating sets $R_k$ have the 
added feature that $E_{k+1}$ is a subset of a component of $\Omega\setminus R_k$
and that that component is a subset of $E_k$. So for each positive integer $k$ we set
$F_k^*$ to be the image of this component under $f$. 
Then $\bigcap_k\overline{F_k}\subset\bigcap_k\overline{f(E_k)}=\emptyset$.
Moreover, $F_k^*$ is a component of $\Omega^\prime\setminus K_k$ with $K_k=\overline{f(R_k\cap\Omega)}$, 
and as $R_k$ is compact and $\diam_M(R_k\cap\Omega)<\infty$, by Lemma~\ref{lem:bdd-to-bdd} we know that
$\diam_M^\prime(f(R_k\cap\Omega))<\infty$ and so $\overline{f(R_k\cap\Omega)}$ is compact in $Y$.
We cannot however guarantee that 
$\dM^\prime(\Omega^\prime\cap \partial F_k^*,\Omega^\prime\cap\partial F_{k+1}^*)$ is positive; hence
we set $F_k:=F_{2k-1}^*$ and note that
\[
\dM^\prime(\Omega^\prime\cap \partial F_k,\Omega^\prime\cap\partial F_{k+1})
\ge \dM^\prime(\Omega^\prime\cap\partial E_{2k},\Omega^\prime\cap\partial E_{2k+1})>0.
\]
Thus the sequence $\{F_k\}$ is an end at $\infty$ of $\Omega^\prime$, completing the proof.
\end{proof}

Given that ends at infinity are prime ends (no other ends divide them, see~\cite{E}), it follows that should
$\ms{E}$ be associated with an end $\ms{F}$ at $\infty$, there can be only one such end at $\infty$ it can
be associated with. 

We next consider the last of the three types of prime ends of $\Omega$. We say that a chain $\{E_k\}$
in an end $\ms{E}$ at $\infty$ of $\Omega$ 
is a \emph{standard representative} of $\ms{E}$ if there is a point $x_0\in X$ and a strictly monotone 
increasing sequence $R_k\to\infty$ such that for each $k$ the set $E_k$ is a connected component
of $\Omega\setminus \overline{B_M(x_0,R_k)}$. Note that if $H\subset\Omega$ such that 
$\diam_M(H)<\infty$, then $diam_M(\overline{H}\cap\Omega)$ is also finite.

\begin{lemma}\label{lem:standard-rep}
Let $\ms{E}$ be an end at $\infty$ of $\Omega$.  Then there is a standard representative 
$\{E_k\}\in\ms{E}$. Moreover, for each $x_0\in\Omega$
there is a representative chain $\{F_k\}\in\ms{E}$ such that
for each $k\in\N$, $F_k$ is a component of $\Omega\setminus\overline{B_M(x_0,k)}$. 
\end{lemma}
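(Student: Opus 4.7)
The plan is to fix $x_0 \in \Omega$ and any representative chain $\{E_j^0\} \in \ms{E}$, and for each $k \in \N$ to single out the unbounded component $F_k$ of $\Omega \setminus \overline{B_M(x_0, k)}$ that eventually swallows the tail of $\{E_j^0\}$. Once $\{F_k\}$ is in hand, Lemma~\ref{lem:end-b-is-prime} gives that $\ms{F} := [\{F_k\}]$ is a prime end, and the explicit containment $E_{J_k}^0 \subset F_k$ yields $\ms{E} \mid \ms{F}$; primality of $\ms{F}$ then forces $\ms{E} = \ms{F}$ and hence $\{F_k\} \in \ms{E}$. Since this produces a chain of components of $\Omega \setminus \overline{B_M(x_0, k)}$ for an arbitrary $x_0 \in \Omega$, the second assertion of the lemma is established, and the first follows by specializing to $n_k = k$ in the definition of a standard representative.

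The central preparatory step is the claim that for every $k \in \N$ there is some $J_k \in \N$ with $E_j^0 \cap \overline{B_M(x_0, k)} = \emptyset$ for all $j \geq J_k$. I would argue by contradiction: if not, choose $y_n \in E_{j_n}^0 \cap \overline{B_M(x_0, k)}$ with $j_n \to \infty$, and use properness of $\overline{\Omega}^M$ to extract a $\dum$-convergent subsequence with limit $y^* \in \overline{\Omega}^M$. Composing with the natural $1$-Lipschitz map $\iota \colon \overline{\Omega}^M \to \overline{\Omega}$ (available because $d_X \leq \dum$ on $\Omega$) yields a $d_X$-limit $\iota(y^*) \in \overline{\Omega}$; by nesting of $\{E_j^0\}$, for every fixed $j$ the tail of $\{y_n\}$ lies in $E_j^0$, so $\iota(y^*) \in \overline{E_j^0}$ for each $j$, placing $\iota(y^*) \in \bigcap_j \overline{E_j^0}$ and contradicting $\{E_j^0\}$ being a chain at $\infty$. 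With the claim in hand, $E_j^0$ is connected and lies in $\Omega \setminus \overline{B_M(x_0, k)}$ for all $j \geq J_k$, and the nested family $\{E_j^0\}_{j \geq J_k}$ lies in a single component; define $F_k$ to be this component.

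It remains to verify that $\{F_k\}$ fits the framework of Lemma~\ref{lem:end-b-is-prime}. The same Lipschitz-map argument shows $\overline{B_M(x_0, k)}$ is compact in $X$ with $\diam_M(\overline{B_M(x_0, k)} \cap \Omega) \leq 2k$, so each $F_k$ is acceptable at $\infty$. Unboundedness of $F_k$ is immediate from $F_k \supset E_{J_k}^0$, and $F_{k+1} \subset F_k$ follows from the nesting of $\{E_j^0\}$ (both $F_k$ and $F_{k+1}$ contain $E_j^0$ for all sufficiently large $j$). A routine argument using local path-connectedness places $\Omega \cap \partial F_k \subset \overline{B_M(x_0, k)}$, giving $\dM(\Omega \cap \partial F_k, \Omega \cap \partial F_{k+1}) \geq 1$. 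Lemma~\ref{lem:end-b-is-prime} then tells us $\ms{F}$ is a prime end, divisibility $\ms{E} \mid \ms{F}$ combined with primality forces $\ms{E} = \ms{F}$, and equality of impressions between equivalent chains automatically delivers $\bigcap_k \overline{F_k} = \emptyset$, so $\{F_k\}$ is indeed a chain at $\infty$. The main obstacle is the tail-outside-ball claim, which hinges on converting $\dum$-bounded sequences into ones with $d_X$-limits in $\overline{\Omega}$ via properness of $\overline{\Omega}^M$; once that step is secured, everything else assembles smoothly from the earlier lemmata.
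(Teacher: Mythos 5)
Your construction is essentially the paper's: take the components of $\Omega\setminus\overline{B_M(x_0,k)}$ containing the tail of a given representative chain. But there is a genuine gap in how you close the argument. Both your subsequence extraction and your appeal to Lemma~\ref{lem:end-b-is-prime} invoke properness of $\overline{\Omega}^M$, which is \emph{not} a hypothesis of Lemma~\ref{lem:standard-rep} (the paper adds it explicitly in Lemma~\ref{lem:end-b-is-prime} precisely because it does not hold in general, and deliberately omits it here). The first use is harmless and easily repaired: $\overline{B_M(x_0,k)}$ is $d_X$-bounded (since $d_X\le\dum$) and closed in the proper space $X$, hence compact, so you can extract a $d_X$-convergent subsequence whose limit lands in $\bigcap_j\overline{E_j^0}=\emptyset$ without ever mentioning $\overline{\Omega}^M$. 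The second use is the real problem. You only establish the one-way divisibility $\ms{E}\,|\,[\{F_k\}]$ and then lean on primality of $[\{F_k\}]$ to force equality; but without Lemma~\ref{lem:end-b-is-prime} you have no primality, and without primality the containments $E^0_{J_k}\subset F_k$ go the wrong way to conclude $\bigcap_k\overline{F_k}=\emptyset$. A priori $\{F_k\}$ could be a type~(b) chain with non-empty impression, hence an end genuinely different from the end at $\infty$ $\ms{E}$, and your argument cannot rule this out.

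The paper closes this loop differently, and you should too: it makes the radii swallow the separating compacta of the original chain. Writing $E^0_j$ as an unbounded component of $\Omega\setminus K_j$ with $\diam_M(K_j\cap\Omega)<\infty$, once $k\ge\diam_M(\{x_0\}\cup K_j)$ you have $K_j\cap\Omega\subset\overline{B_M(x_0,k)}$, so $F_k$ is a connected subset of $\Omega\setminus K_j$ that contains $E^0_{J_k}\subset E^0_j$ and therefore satisfies $F_k\subset E^0_j$. This gives the reverse divisibility $[\{F_k\}]\,|\,\ms{E}$ directly, hence equivalence of the two chains, and in particular $\bigcap_k\overline{F_k}\subset\bigcap_j\overline{E^0_j}=\emptyset$, so $\{F_k\}$ really is a chain at $\infty$ (for which primality is automatic by~\cite{E}, though it is no longer needed). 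With this modification your argument matches the paper's and requires only properness of $X$.
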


\begin{proof}
Let $\{G_k\}\in\ms{E}$ and for each $k$ let $K_k$ be a compact subset of $X$ such that $\diam_M(K_k\cap\Omega)$
is finite and $G_k$ is an unbounded component of $\Omega\setminus K_k$. Fix $x_0\in\Omega$. Since $\bigcap\overline{E_k}$
is empty, we can assume without loss of generality that $x_0\not\in\overline{E_1}$. Then $x_0$ is in a different component
from $E_k$ of the set $\Omega\setminus K_k$ for each $k\in\N$. In what follows, by $B_M(x_0,\tau)$ we mean
the ball \emph{in $\Omega$} centered at $x_0$ with radius $\tau$ with respect to the Mazurkiewicz metric $\dum$.

As $\tau_k:=\diam_M(K_k)<\infty$, inductively we can find a sequence of strictly monotone increasing
positive real numbers $R_k$ as follows. Let 
\[
R_1:=\max\{1, \diam_M(\{x_0\}\cup K_1)\}. 
\]
We claim that
there is some $L_1\in\N$ such that $\overline{G_{1+L_1}}$ does not intersect $\overline{B_M(x_0,R_1)}$. If this claim is 
wrong, then for each positive integer $j\ge 2$ there is a point $x_j\in\overline{G_j}\cap \overline{B_M(x_0,R_1)}$,
in which case, by the compactness of the set $\overline{B_M(x_0,R_1)}$ there is a subsequence $x_{j_n}$ and
a point $x_\infty\in \overline{B_M(x_0,R_1)}\subset X$ such that $x_{j_n}\to x_\infty$; note by the nestedness property
of the chain $\{G_k\}$ that $x_\infty\in\overline{G_k}$ for each $k$, violating the property that $\bigcap_k\overline{G_k}$
is empty. Therefore the claim holds. We set $E_1$ to be the connected component of
$\Omega\setminus \overline{B_M(x_0,R_1)}$ that contains $G_{1+L_1}$. Set $n_1=1+L_1$. We then set
\[
R_2:=\max\{2, R_1+1, \diam_M(\{x_0\}\cup K_{n_1})\},
\]
and as before, note that there is some $n_2>n_1$ such that $\overline{G_{n_2}}$ is disjoint from
$\overline{B_M(x_0,R_2)}$. We set $E_2$ to be the component of $\Omega\setminus\overline{B_M(x_0,R_2)}$
that contains $G_{n_2}$. Thus inductively, once $R_1,\cdots, R_k$ have been determined and 
the corresponding strictly monotone increasing sequence of integers $n_1,\cdots, n_k$ have also been chosen,
we set
\[
R_{k+1}:=\max\{k+1,R_k+1, \diam_M(\{x_0\}\cup K_{n_k})\}
\]
and choose $n_{k+1}$ such that $n_{k+1}>n_k$ and $\overline{G_{n_{k+1}}}\cap \overline{B_M(x_0,R_{k+1})}$
is empty, and set $E_{k+1}$ to be the component of $\Omega\setminus \overline{B_M(x_0,R_{k+1})}$ that contains
$G_{n_{k+1}}$. Thus we obtain the sequence $\{E_k\}$; it is not difficult to see that this sequence is a chain at $\infty$ 
according to Definition~\ref{def:type-c-ends}. Indeed, as $G_{n_{k+1}}\subset E_{k+1}$, and as
$G_{n_{k+1}}\subset G_{n_k}$ and $K_{n_k}\cap E_{k+1}=\emptyset$, it follows that $E_{k+1}$ belongs to 
the same component as $G_{n_{k+1}}$ of $\Omega\setminus K_{n_k}$, and so $E_{k+1}\subset G_{n_k}$. Thus we have
\[
G_{n_{k+1}}\subset E_{k+1}\subset G_{n_k}\subset E_k,
\]
and so we have both the nested property of the sequence $\{E_k\}$ and the fact that $\{G_k\}$ divides $\{E_k\}$.
Therefore,  $[\{E_k\}]$ is an end at $\infty$ of $\Omega$ and hence is a prime end, and thus $[\{E_k\}]=\ms{E}$; see~\cite[Remark~4.1.11, Remark~4.1.14]{E} for details of the proof.

The last part of the claim follows from knowing that for each $k\in\N$ there are positive integers
$j,l$ such that $R_k\le j\le R_{k+l}$.
\end{proof}

\begin{lemma}
Let $\ms{E}$ be an end at $\infty$ of $\Omega$. Then either $\bigcap_k\overline{f(E_k)}=\emptyset$ for each
$\{E_k\}\in\ms{E}$ or $\bigcap_k\overline{f(E_k)}$ non-empty for each $\{E_k\}\in\ms{E}$. In the first case
$f(\ms{E})$ is an end at $\infty$ of $\Omega^\prime$. In the second case $f(\ms{E})$ is a 
prime end of type~(b) of $\Omega^\prime$.
\end{lemma}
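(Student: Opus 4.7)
The plan is first to establish the dichotomy and then to analyze each case using a \emph{standard representative} of $\ms{E}$. For the dichotomy, note that if $\{E_k\},\{E'_j\}\in\ms{E}$ are equivalent chains, then there exist $j_k\nearrow\infty$ with $E'_{j_k}\subset E_k$ and a symmetric sequence of inclusions in the other direction, so
\[
\bigcap_k \overline{f(E_k)}=\bigcap_j \overline{f(E'_j)},
\]
making the dichotomy independent of the representative. Now fix a standard representative $\{E_k\}\in\ms{E}$ from Lemma~\ref{lem:standard-rep}: $E_k$ is the unbounded component of $\Omega\setminus\overline{B_M(x_0,R_k)}$ for some $x_0\in\Omega$ and $R_k\nearrow\infty$. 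Each $E_k$ is $\dum$-unbounded, so Lemma~\ref{lem:bdd-to-bdd} applied to $f^{-1}$ gives that $f(E_k)$ is $\dumm$-unbounded. The set $\overline{B_M(x_0,R_k)}\cap\Omega$ has $\dum$-diameter at most $2R_k$, so Lemma~\ref{lem:bdd-to-bdd} forces its $f$-image to have finite $\dumm$-diameter, hence to be $d_Y$-bounded, and by properness of $Y$, to have compact $Y$-closure $K_k'$. The quantitative BQS estimate from the proof of Lemma~\ref{lem:type-b} then yields
\[
\dM^\prime(\Omega'\cap\partial f(E_k),\,\Omega'\cap\partial f(E_{k+1}))>0
\]
for every $k$.

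In Case~1, when $\bigcap_k\overline{f(E_k)}=\emptyset$, each $f(E_k)$ is a connected unbounded component of $\Omega'\setminus K_k'$ and is therefore acceptable at $\infty$ in the sense of Definition~\ref{def:type-c-ends}. Combined with the positive Mazurkiewicz separation and the empty-intersection hypothesis, $\{f(E_k)\}$ is a chain at $\infty$, so $f(\ms{E})$ is an end at $\infty$ of $\Omega'$, and such an end is automatically prime. In Case~2, when $\bigcap_k\overline{f(E_k)}\neq\emptyset$, any point $z$ of this intersection must lie in $\partial\Omega'$: otherwise continuity of $f^{-1}$ at $z\in\Omega'$ would produce a point of $\bigcap_k\overline{E_k}$, contradicting the fact that $\ms{E}$ is an end at $\infty$. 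Each $\overline{f(E_k)}$ is proper as a closed subset of the proper space $Y$. Condition~(b) of Definition~\ref{def:type-ab-ends} is furnished by the compact set $K_k'$, whose two separating components are selected via Lemma~\ref{lem:E1-E2-separation} (transported to $\Omega'$) so that $\Omega'\cap\partial f(E_k)$ and $\Omega'\cap\partial f(E_{k+1})$ lie on different sides; the remaining conditions~(a), (c), (d) were verified in the previous paragraph. Thus $\{f(E_k)\}$ is a chain of type~(b) in $\Omega'$, so $f(\ms{E})$ is an end of type~(b).

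It remains to show that $f(\ms{E})$ is prime in Case~2. Given any end $\ms{F}$ of $\Omega'$ with $\ms{F}\mid f(\ms{E})$, Lemmas~\ref{lem:bdd-end},~\ref{lem:type-b} and Case~1 above (applied to the BQS homeomorphism $f^{-1}$) collectively ensure that $f^{-1}(\ms{F})$ is an end of $\Omega$; since divisibility of chains is preserved by applying the bijection $f^{-1}$, it divides $\ms{E}$. Because $\ms{E}$ is prime (every end at $\infty$ is prime), $f^{-1}(\ms{F})=\ms{E}$, and so $\ms{F}=f(\ms{E})$. The main obstacle is the verification of condition~(b) of Definition~\ref{def:type-ab-ends} in Case~2: one must produce a compact subset of $Y$ of finite Mazurkiewicz diameter that separates the boundaries of consecutive $f(E_k)$ in $\Omega'$. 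Starting from the standard representative is precisely what makes this tractable, since the natural separators $\overline{B_M(x_0,R_k)}\cap\Omega$ have $\dum$-diameter at most $2R_k$, Lemma~\ref{lem:bdd-to-bdd} transfers this boundedness under $f$, and properness of $Y$ then supplies the required compactness.
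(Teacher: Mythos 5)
Your strategy is the paper's own: pass to a standard representative via Lemma~\ref{lem:standard-rep}, use a BQS estimate to obtain the positive Mazurkiewicz separation of consecutive image boundaries, and split into cases according to whether $\bigcap_k\overline{f(E_k)}$ is empty (your observation that this dichotomy is independent of the representative is a nice addition that the paper leaves implicit). However, there is a genuine error in Case~2, in the verification of condition~(b) of Definition~\ref{def:type-ab-ends}. That condition demands a compact set $P_k$ such that $\Omega^\prime\cap\partial f(E_k)$ and $\Omega^\prime\cap\partial f(E_{k+1})$ lie in two \emph{different components of} $\Omega^\prime\setminus P_k$; in particular both boundaries must be disjoint from $P_k$. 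Your candidate $K_k^\prime=\overline{f(\overline{B_M(x_0,R_k)}\cap\Omega)}$ cannot serve: since $E_k$ is a component of the open set $\Omega\setminus\overline{B_M(x_0,R_k)}$, one has $\Omega\cap\partial E_k\subset\overline{B_M(x_0,R_k)}\cap\Omega$, hence $\Omega^\prime\cap\partial f(E_k)=f(\Omega\cap\partial E_k)\subset K_k^\prime$, so $\Omega^\prime\cap\partial f(E_k)$ meets no component of $\Omega^\prime\setminus K_k^\prime$ at all. Lemma~\ref{lem:E1-E2-separation} does not repair this, as it presupposes a set that already separates the two boundaries and merely shrinks it. The fix (and what the paper does) is to place the separator strictly between the two radii: take $P_k=\overline{f(S_M(x_0,\tfrac{R_k+R_{k+1}}{2}))}$ with $S_M(x_0,\rho)=\{y\in\Omega\,:\,\dum(x_0,y)=\rho\}$; continuity of $\dum(x_0,\cdot)$ forces every curve in $\Omega$ from $\Omega\cap\partial E_k$ to $\Omega\cap\partial E_{k+1}$ to cross this sphere, the homeomorphism transports the separation to $\Omega^\prime$, and Lemma~\ref{lem:bdd-to-bdd} together with properness of $Y$ gives compactness and finite Mazurkiewicz diameter exactly as in your argument for $K_k^\prime$.

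A secondary point: the separation inequality cannot simply be quoted from the proof of Lemma~\ref{lem:type-b}, because that computation is anchored on a separating set $R$ supplied by the hypothesis of a type~(b) chain, which is not available here (the paper says so explicitly and reruns the argument). The correct anchor for a standard representative is a continuum meeting $\Omega\cap\partial E_k$, which has finite Mazurkiewicz diameter because $\Omega\cap\partial E_k$ lies in the Mazurkiewicz sphere of radius $R_k$; any continuum joining the two image boundaries pulls back to one meeting $\Omega\cap\partial E_k$, and the BQS inequality then applies as in Lemma~\ref{lem:type-b}. This is a matter of writing out the transfer rather than a conceptual gap, but as stated your appeal to that lemma's proof does not apply verbatim. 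The remainder of your argument (the dichotomy, Case~1, the containment of the impression in $\partial\Omega^\prime$, unboundedness of the images via Lemma~\ref{lem:bdd-to-bdd}, and the primality argument via $f^{-1}$) is sound and matches the paper.
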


\begin{proof}
Let $\{E_k\}\in\ms{E}$ be a standard representative; hence, there is some $x_0\in \Omega$
and for each $k\in\N$ there exists $R_k>0$ such that
the sequence $\{R_k\}$ is strictly monotone increasing with $\lim_k R_k=\infty$, and $E_k$ is a component
of $\Omega\setminus \overline{B_M(x_0,R_k)}$. 
Fix a continuum $A\subset\Omega$ intersecting $\Omega\cap\partial E_k$ and with
\[
\diam(A)\le 2\diam_M(\Omega\cap\partial E_k)<\infty.
\]

Let $z\in\Omega^\prime\cap\partial f(E_k)$ and
$w\in\Omega^\prime\cap\partial f(E_{k+1})$, and let $\gamma$ be a continuum in $\Omega^\prime$ containing
$z,w$. Then $f^{-1}(\gamma)$ is a continuum in $\Omega$ containing
$f^{-1}(z)\in \Omega\cap\partial E_k$ and $f^{-1}(w)\in\Omega\cap\partial E_{k+1}$. By the fact that
$\{E_k\}$ is a chain of $\Omega$, we know that 
\[
\diam(f^{-1}(\gamma))\ge \tau:=\dist_M(\Omega\cap\partial E_k, \Omega\cap\partial E_{k+1})>0.
\]
As $f^{-1}(\gamma)$ intersects both $\Omega\cap\partial E_k$ and $\Omega\cap\partial E_{k+1}$,
we can find a continuum $\beta$ connecting $A\cap\partial E_k$ and $f^{-1}(\gamma)\cap\partial E_k$
such that $\diam(\beta)\le 2\diam_M(\Omega\cap\partial E_k)$. By the BQS property, we now have
\[
\frac{\diam(f(\beta\cup A))}{\diam(\gamma)}\le \eta\left(\frac{\diam(\beta\cup A)}{\diam(f^{-1}(\gamma)}\right)
\le \eta\left(\frac{4\diam_M(\Omega\cap\partial E_k)}{\tau}\right),
\]
and so
\[
0<\frac{\diam(f(A))}{\eta\left(\frac{4\diam_M(\Omega\cap\partial E_k)}{\tau}\right)}\le \diam(\gamma),
\]
and so we have that
\begin{equation}\label{eq:positive-separation1}
\dist_M(\Omega^\prime\cap\partial f(E_k),\Omega^\prime\cap\partial f(E_{k+1}))
 \ge \frac{\diam(f(A))}{\eta\left(\frac{4\diam_M(\Omega\cap\partial E_k)}{\tau}\right)}>0.
\end{equation}
Note that as neither $E_k$ nor $E_{k+1}$ is bounded, we cannot utilize that lemma directly,
nor can we directly call upon the relevant part of the proof of Lemma~\ref{lem:type-b} as there is no
separating set $R_k$; but the idea of the proof is quite similar. For the convenience of the reader
we gave the complete proof above.

Now we have two possibilities, either $\bigcap_k\overline{f(E_k)}$ is empty, or it is not empty.

\noindent{\bf Case 1:} $\bigcap_k\overline{f(E_k)}=\emptyset$. For each $k\in\N$ note that
$f(E_k)$ is a component of $\Omega^\prime\setminus f(\overline{B_M(x_0,R_k)}\cap\Omega)$.
By Lemma~\ref{lem:bdd-to-bdd}, we know that 
$\dist_M^\prime(f(\overline{B_M(x_0,R_k)}\cap\Omega))<\infty$, and moreover, as the topology
on $\Omega$ with respect to $d_X$ and with respect to $\dum$ are the same, we also have that
\[
\overline{f(\overline{B_M(x_0,R_k)}\cap\Omega)}\cap\Omega^\prime=f(\overline{B_M(x_0,R_k)}\cap\Omega).
\]
Also, $\overline{f(\overline{B_M(x_0,R_k)}\cap\Omega)}$ is compact as $Y$ is proper
and $\diam(\overline{f(\overline{B_M(x_0,R_k)}\cap\Omega)})=\diam(f(\overline{B_M(x_0,R_k)}\cap\Omega)
\le \diam_M(f(\overline{B_M(x_0,R_k)}\cap\Omega)<\infty$. Again, as $Y$ is proper, $\overline{f(E_k)}$
is also proper.
Therefore $f(E_k)$ is an acceptable set at $\infty$, and by~\eqref{eq:positive-separation1} we see
that $\{f(E_k)\}$ is a chain at $\infty$ of $\Omega^\prime$.

\noindent{\bf Case 2:} $\bigcap_k\overline{f(E_k)}\ne\emptyset$. In this case we need to show that
$\{f(E_k)\}$ forms a chain of type~(b). The only additional condition we need to check is that
for each $k\in\N$ there is some compact set $P_k\subset Y$ such that 
$\Omega^\prime\cap\partial f(E_k)$ and $\Omega^\prime\cap\partial f(E_{k+1})$ are in different
\emph{components} of $\Omega^\prime\setminus P_k$. 
In what follows, by $S_M(x_0,\tau)$ we mean the set $\{y\in\Omega\, :\, \dum(x_0,y)=\tau\}$.
We choose 
\[
P_k=\overline{f(S_M(x_0,\tfrac{R_k+R_{k+1}}{2}))}.
\]
Since $\Omega\cap\partial E_k\subset\Omega\cap\partial B_M(x_0,R_k)$ and
$\Omega\cap\partial E_{k+1}\subset\Omega\cap\partial B_M(x_0,R_{k+1})$, it follows that these
two sets cannot intersect the same connected component of 
$\Omega\setminus S_M(x_0,\tfrac{R_k+R_{k+1}}{2}))$. Since 
$\overline{B_M(x_0,R_k)}\cap\Omega$ is connected in $\Omega$, it follows that
$\Omega\cap\partial E_k$ is in one component of $\Omega\setminus S_M(x_0,\tfrac{R_k+R_{k+1}}{2}))$.
As $\Omega\cap\overline{E_{k+1}}\subset\Omega\setminus B_M(x_0,R_{k+1})$ is connected,
it also follows that $\Omega\cap\partial E_{k+1}$ lies in a component of 
$\Omega\setminus S_M(x_0,\tfrac{R_k+R_{k+1}}{2}))$. As $f$ is a homeomorphism, similar statements
hold for $\Omega^\prime\cap\partial f(E_k)$, $\Omega^\prime\cap\partial f(E_{k+1})$,
that is, they are contained in two different components of $\Omega\setminus P_k$.
As $\diam_M(P_k\cap\Omega^\prime)<\infty$ by Lemma~\ref{lem:bdd-to-bdd}, it follows also that
$P_k$ is a compact subset of $Y$. Thus the separation condition for the sequence 
$\{f(E_k)\}$ is verified, and so this sequence is a chain of $\Omega^\prime$ of type~(b).

To see that $[\{f(E_k)\}]$ is a prime end, note that if $\{G_k\}$ is a chain that divides
$\{f(E_k)\}$, then $\{f^{-1}(G_k)\}$ is a chain of $\Omega$ that divides $\{E_k\}$; as 
$\{E_k\}$ is a prime end, it follows that $\{E_k\}$ divides $\{f^{-1}(G_k)\}$, and so
$\{f(E_k)\}$ divides $\{G_k\}$ as well. Thus the corresponding end is a prime end.
\end{proof}

\begin{ex}
In Example~\ref{ex:not-BQS-Plane-Sphere}, $\Omega$ has a prime end of type~(b) 
with impression $\{0\} \times [0,\infty)$ (e.g. take 
$E_k = \biggl((0, \tfrac{3}{2^k}) \times (0, \infty) \biggr) \cap \Omega$) 
which gets mapped to an end at $\infty$ by $f$.  The BQS map
$f^{-1}$ then also maps an end at infinity to an end of type (b).  
\end{ex}

We now summarize the above study. Recall that we assume $X$ and $Y$ to be locally path-connected
proper metric spaces and that $\Omega\subset X$, $\Omega^\prime\subset Y$ are domains.

\begin{thm}\label{thm:BQS-unbdd}
Suppose that $\Omega$ is an unbounded domain and let $f:\Omega\to\Omega^\prime$ be a BQS homeomorphism.
Then there is a homeomorphism $f_P:\overline{\Omega}^P\to\overline{\Omega^\prime}^P$ such that
$f(\ms{E})$ is a bounded prime end (i.e.~a prime end of type~(a)) if $\ms{E}$ is a bounded prime end, and
$f(\ms{E})$ is either an unbounded prime end (i.e.~of type~(b)) or is an end at $\infty$ of $\Omega^\prime$
if $\ms{E}$ is an unbounded prime end or an end at $\infty$ of $\Omega$. Moreover, if $\ms{E}$ is a singleton
prime end, then so is $f(\ms{E})$.
\end{thm}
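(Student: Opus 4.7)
The plan is to assemble $f_P$ from the three preceding lemmas and then verify bijectivity and continuity. First I would set $f_P|_\Omega := f$ and define $f_P$ on $\partial_P\Omega$ by cases: Lemma~\ref{lem:bdd-end} handles bounded prime ends (type~(a)) and preserves the singleton property; Lemma~\ref{lem:type-b} handles prime ends of type~(b), sending $\mathscr{E}=[\{E_k\}]$ to a prime end of type~(b) of $\Omega'$ when $\bigcap_k\overline{f(E_k)}\neq\emptyset$ and otherwise to the end at $\infty$ whose representative $\{F_k\}$ satisfies $F_{k+1}\subset f(E_k)\subset F_k$; the final lemma handles ends at $\infty$, producing either an end at $\infty$ or a prime end of type~(b) of $\Omega'$. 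Well-definedness on equivalence classes follows because divisibility of chains is a purely set-theoretic nested-containment property preserved by the homeomorphism~$f$, as already observed in the proof of Theorem~\ref{thm:BQS-fp-homeo}.

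Bijectivity is then immediate by symmetry: $f^{-1}\colon\Omega'\to\Omega$ is itself a BQS homeomorphism, so the same construction yields $(f^{-1})_P\colon\overline{\Omega'}^P\to\overline{\Omega}^P$; the containment relations $F_{k+1}\subset f(E_k)\subset F_k$ from the lemmas force $(f^{-1})_P\circ f_P=\mathrm{id}$ and $f_P\circ(f^{-1})_P=\mathrm{id}$.

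The main obstacle, and the only substantive step beyond the lemmas, is verifying continuity in the sequential topology of $\overline{\Omega}^P$. Suppose $\zeta_k\to\zeta$ in $\overline{\Omega}^P$; by splitting into subsequences, I would treat separately the cases $\zeta_k\in\Omega$ and $\zeta_k\in\partial_P\Omega$, and further split the latter according to which of the three types each $\zeta_k$ belongs to. Fixing a chain $\{E_k\}\in\zeta$ that realizes the type of $\zeta$, convergence gives that the $\zeta_i$ (or the terms of representing chains of the $\zeta_i$) eventually lie in $E_k$. Applying $f$ and using the key containment relations from Lemma~\ref{lem:type-b} and the subsequent lemma (namely $f(E_k)\subset F_k$ where $\{F_k\}$ represents $f_P(\zeta)$) together with Lemma~\ref{lem:bdd-end}, one shows $f_P(\zeta_i)$ eventually lies in $F_k$ for every $k$, i.e.\ $f_P(\zeta_k)\to f_P(\zeta)$. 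The delicate point is the case where a type~(b) prime end of $\Omega$ maps to an end at $\infty$ of $\Omega'$; here one must use that the chain $\{F_k\}$ produced by Lemma~\ref{lem:type-b} was obtained (after passing to a subsequence) from the components of $\Omega'\setminus\overline{f(R_k\cap\Omega)}$ containing the images of the original chain sets, so the nested containment $f(E_{k+1})\subset F_k$ remains valid and drives the convergence.

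Finally, since $f^{-1}$ is also a BQS homeomorphism, the identical argument applied to $(f^{-1})_P$ gives continuity of $f_P^{-1}$, so $f_P$ is a homeomorphism. The three assertions in the theorem statement (preservation of bounded prime ends, the type~(b)/end-at-$\infty$ dichotomy, and preservation of singleton impressions) are then read off directly from Lemmas~\ref{lem:bdd-end}, \ref{lem:type-b}, and the lemma on ends at $\infty$.
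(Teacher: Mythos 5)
Your proposal is correct and follows essentially the same route the paper intends: the paper states Theorem~\ref{thm:BQS-unbdd} as a summary of Lemmas~\ref{lem:bdd-end}, \ref{lem:type-b}, \ref{lem:standard-rep} and the lemma on ends at $\infty$, with the well-definedness, bijectivity-by-symmetry, and sequential-continuity arguments carried over from the proof of Theorem~\ref{thm:BQS-fp-homeo} exactly as you describe. Your identification of the type~(b)-to-end-at-$\infty$ case as the delicate point, resolved via the containment $F_{k+1}\subset f(E_k)\subset F_k$ from Lemma~\ref{lem:type-b}, matches the paper's treatment (including the remark after that lemma that the associated end at $\infty$ is unique).
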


\begin{remark}
Combining the above theorem with~\cite[Proposition~10.11]{Hei} we see that if $\Omega$ is finitely connected at the
boundary, then $f_P\vert_{\overline{\Omega}^P\setminus\partial_\infty\Omega}$ is a BQS homeomorphism
with respect to the Mazurkiewicz metric $\dum$,
where $\partial_\infty\Omega$ is the collection of all ends at $\infty$ of $\Omega$. However, unlike in the case
of bounded domains (see Proposition~\ref{prop:bdd-domain-ends-match}(c)), we cannot conclude here that
$\Omega^\prime$ must also be finitely connected at the boundary, see Example \ref{ex:not-BQS-Plane-Sphere};
however, this is not an issue as no continuum in $\overline{\Omega}^P$ contains points from $\partial_\infty\Omega$.
The obstacle however is to know about $f^{-1}_P$, and therefore it is natural to ask whether there are continua in 
$\overline{\Omega^\prime}^P$ that do not lie entirely in $\Omega^\prime\cup\partial_B\Omega^\prime$,
where $\partial_B\Omega^\prime$ is the collection of all bounded prime ends of $\Omega^\prime$ (which is
the same as the collection of all singleton prime ends, thanks to the above theorem). Unfortunately it is possible for 
$\partial_P\Omega^\prime\setminus[\partial_B\Omega^\prime\cup\partial_\infty\Omega^\prime]$ to contain many continua, as
Example~\ref{ex:infinite-dog-poo-volume} below shows. We do not have a notion of metric on $\overline{\Omega^\prime}^P$
in this case, and so it does not make sense to ask that $f_P$ is itself a BQS map unless $\Omega^\prime$ is also
finitely connected at the boundary (in which case, it is indeed a BQS homeomorphism).
\end{remark}

\begin{ex}\label{ex:infinite-dog-poo-volume}
Let $\Omega_0$ be the planar domain 
\[
\{(x,y)\in\R^2\, :\, x>0, 0<y<\tfrac{1}{1+x}\}\setminus \left(
\bigcup_{2\le n\in\N} [0,2n-2]\times\{\tfrac{1}{2n}\}
\bigcup_{n\in\N}[\tfrac{1}{2}, 2n]\times\{\tfrac{1}{2n+1}\}\right),
\]
and set $\Omega=\Omega_0\times(0,\infty)$. 

\begin{figure}[!htb]
\begin{center}
\begin{tikzpicture}[xscale = 1, yscale=5, domain=0:8,samples=600, smooth]
\draw[black, thick] (0,0) -- (0, .6);
\draw[black, thick] (0,0) -- (8, 0);
\draw[black, domain=.666:8] plot(\x, {1/(\x+1)});
\draw[black] (0, .25) -- (2, .25);
\draw[black] (.5, .2) -- (4, .2);
\draw[black] (0, .166) -- (4, .166);
\draw[black] (.5, .143) -- (6, .143);
\draw[black] (0, .125) -- (6, .125);
\draw[black] (.5, .111) -- (8, .111);
\node at (4, .08) {$\vdots$};
\end{tikzpicture}
\end{center}
\caption{$\Omega_0$}
\end{figure}

Note that $\Omega$ is not finitely connected at the boundary.
For each $t>0$ and $k\in\N$ consider
\[
E_k^t:=\{(x,y,z)\in\R^3\, :\, y^2+(z-t)^2<\tfrac{1}{(k+1)^2}\}.
\]
Observe that for each $t>0$ the sequence $\{E_k^t\}$ is a chain of $\Omega$ with impression
$\R\times\{0\}\times\{t\}$. The corresponding end is a prime end, and for each
compact interval $I\subset(0,\infty)$, the set
$\{[\{E_k^t\}]\, :\, t\in I\}$ is a subset of $\partial_P\Omega\setminus[\partial_B\Omega\cup\partial_\infty\Omega]$
and is a continuum. Note that in this example, $\partial_\infty\Omega$ is empty. 
By opening up the domain along the slitting planes $[0,2n-2]\times\{\tfrac{1}{2n}\}$ and 
$[\tfrac{1}{2}, 2n]\times\{\tfrac{1}{2n+1}\}$, we obtain a domain that is finitely connected at the boundary, and
the map from $\Omega$ to this domain can be seen to be a BQS homeomorphism.
\end{ex}

The following example shows that it is possible to have a prime end in the sense of~\cite{E} which 
makes the prime end closure sequentially compact, but no equivalent prime end in our sense.

\begin{ex}
We construct $\Omega$ by removing closed subsets of $\R^2$.  
The ``barrier" $G(h)$ centered at $(0,0)$ with height $h$ is given by
$G(h) = \bigcup_{k \in \Z} C_k$, where $C_0:=((-\infty,-1]\cup[1,\infty))\times\{0\}$ and for $h, k > 0$,
\begin{equation*}
\begin{split}
C_k &= (\{1 - 2^{-k}\} \times [-h2^{-k}, h2^{-k}]) \cup ([1-2^{-k}, 2^k] \times \{-h2^{-k}, h2^{-k}\}) \\
C_{-k} &= (\{-1 + 2^{-k}\} \times [-h2^{-k}, h2^{-k}]) \cup ([-2^k, -1+2^{-k}] \times \{-h2^{-k}, h2^{-k}\})
\end{split}
\end{equation*}
Thus $C_{-k}$ is obtained from $C_k$ by reflecting it about the $y$-axis. See Figure~3 below for an illustration 
of the barrier $G(h)$.

\begin{figure}[!htb]
\begin{center}
\begin{tikzpicture}
\draw[black, thick] (-6, 0) -- (-1, 0);
\draw[black, thick] (1,0) -- (6,0);

\draw[<->, black, thick] (0, -.5) -- (0, .5);
\node at (-.25,0) {$h$};

\draw[black] (.5, -.5) -- (.5, .5);
\draw[black] (.5, .5) -- (2, .5);
\draw[black] (.5, -.5) -- (2, -.5);

\draw[black] (.75, -.25) -- (.75, .25);
\draw[black] (.75, .25) -- (4, .25);
\draw[black] (.75, -.25) -- (4, -.25);

\draw[black] (.875, -.125) -- (.875, .125);
\draw[black] (.875, .125) -- (6, .125);
\draw[black] (.875, -.125) -- (6, -.125);

\draw[black] (-.5, .5) -- (-.5, -.5);
\draw[black] (-.5, -.5) -- (-2, -.5);
\draw[black] (-.5, .5) -- (-2, .5);

\draw[black] (-.75, .25) -- (-.75, -.25);
\draw[black] (-.75, -.25) -- (-4, -.25);
\draw[black] (-.75, .25) -- (-4, .25);

\draw[black] (-.875, .125) -- (-.875, -.125);
\draw[black] (-.875, -.125) -- (-6, -.125);
\draw[black] (-.875, .125) -- (-6, .125);

\end{tikzpicture}
\end{center}
\caption{The gate $G(h)$ centered at $(0,0)$.}
\end{figure}
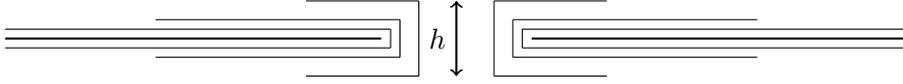

Given a set $A\subset \R^2$ and a point $(a,b)\in\R^2$, we denote by $A+(a,b)$ the set
\[
A+(a,b):=\{(x+a,y+b)\, :\, (x,y)\in A\}.
\]
We now set
\[
\Omega:=\R\times(0,\infty)\setminus \bigcup_{2\le j\in\N} \left(G(2^{-4j})+(2^{j},2^{-j})\right).
\]


%
%
%
%
%
%
%
%
%

The sets $E_j:=\Omega\cap(\R\times(0,2^{-j}))$ forms a chain $\{E_j\}$ of $\Omega$ in the sense of~\cite{E} with
$[\{E_j\}]$ prime, but this does not form a chain in our sense, nor is it equivalent to a chain that is also a chain in our sense.
\end{ex}

\section{Geometric quasiconformal maps and prime ends}

In this section we consider a Carath\'eodory extension theorem for geometric quasiconformal maps.
Unlike with BQS homeomorphisms, geometric quasiconformal maps can map bounded domains to
unbounded domains. In this section we will consider quasiconformal maps between two
domains without assuming boundedness or unboundedness for either domain. As we have
less control with geometric quasiconformal maps than with BQS maps, here we need additional
constraints on the two domains. The main theorem of this section is Theorem~\ref{thm:main-geomQC}.

In this section, we will assume that $X$ and $Y$ are proper metric spaces that are locally path-connected,
and that $\Omega\subset X$, $\Omega^\prime\subset Y$ are domains. We also assume that 
both $(\Omega,\dum)$ and
$(\Omega^\prime,\dumm)$ are Ahlfors $Q$-regular and support a $Q$-Poincar\'e inequality 
for some $Q>1$,
$\mu$ is a Radon measure on $\Omega$ such that bounded subsets of $\Omega$ have
finite measure, 
and that $f:\Omega\to\Omega^\prime$ is a geometric quasiconformal mapping. In light of
the following lemma, if both $\Omega$ and $\Omega^\prime$ are bounded, then 
their prime end closures and their Mazurkiewicz metric completions are equivalent,
in which case by the results of~\cite{HeiK} and~\cite[Proposition~10.11]{Hei}, 
we know that geometric quasiconformal
maps between them have even a geometric
quasiconformal homeomorphic extension to their prime end closures. Thus the interesting
situation to consider here is when at least one of the domains is unbounded. 

Before addressing the extension result given in Theorem~\ref{thm:main-geomQC} below,
we first need to study the properties of prime ends of domains that are Ahlfors regular and support
a Poincar\'e inequality as above.

\begin{lemma}\label{lem:finite-conn-at-bdy-implication}
The domain $\Omega$ is finitely connected at the boundary if and only if the following three
conditions hold:
\begin{enumerate}
  \item[(i)] $\overline{\Omega}^M$ is proper,
  \item[(ii)] $\Omega$ has no type~(b) prime ends, 
  \item[(iii)] $\overline{\Omega}^P$ is sequentially compact.
\end{enumerate}
\end{lemma}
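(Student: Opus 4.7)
The proof splits naturally into the two implications.

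For the forward direction, assume $\Omega$ is finitely connected at every boundary point. Condition~(i) follows from a Cauchy-extraction argument: given a $\dum$-bounded sequence $\{x_n\}\subset\Omega$, properness of $X$ yields a subsequence $x_{n_k}\to x$ in $d_X$ with $x\in\overline{\Omega}$; if $x\in\partial\Omega$, the finite list of components of $B(x,r)\cap\Omega$ whose closures contain $x$ (from Definition~\ref{defn:finiteConnBdy}) combined with a pigeonhole argument forces the tail of the subsequence into a single such component, from which Cauchyness in $\dum$ and convergence in $\overline{\Omega}^M$ follow. Condition~(ii) follows by contradiction: if $\mathscr{E}=[\{E_k\}]$ were a type~(b) prime end with $x\in I(\mathscr{E})\subset\partial\Omega$, then mimicking the tree construction in Lemma~\ref{lem:end-div-prime-finite-conn} and using finite connectivity at $x$ would yield a chain $\{G_k\}$ of type~(a) with singleton impression $\{x\}$ satisfying $\{G_k\}\,|\,\{E_k\}$; primeness would force $[\{G_k\}]=\mathscr{E}$, contradicting the fact that one chain is eventually bounded while the other never is. Condition~(iii) then follows by splitting a sequence in $\overline{\Omega}^P$ into those prime ends escaping every $\dum$-bounded region (converging to an end at $\infty$ via the standard representatives of Lemma~\ref{lem:standard-rep}) and those clustering in a $\dum$-bounded region (where~(i) provides a $d_X$-limit $y\in\partial\Omega$ and~(ii) combined with the singleton-prime-end correspondence produces a singleton type~(a) prime end as the limit).

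For the reverse direction, assume (i)--(iii) and argue contrapositively. If finite connectivity fails at some $x\in\partial\Omega$, then for some $r>0$ either (a) $B(x,r)\cap\Omega$ has infinitely many components $\{W_j\}$ with $x\in\partial W_j$, or (b) there exist components $\{V_n\}$ of $B(x,r)\cap\Omega$ with $x\not\in\overline{V_n}$ but $d_X(x,V_n)\to 0$. In case~(a), I would use local path-connectedness together with the $Q$-Loewner property guaranteed by Ahlfors $Q$-regularity and the $Q$-Poincar\'e inequality (Theorem~\ref{thm:HeiK}) to build, inside each $W_j$, a singleton prime end $\mathscr{E}_j$ with $I(\mathscr{E}_j)=\{x\}$; sequential compactness~(iii) yields a limit $\mathscr{F}\in\overline{\Omega}^P$ distinct from every $\mathscr{E}_j$. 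Disjointness of the $W_j$ then forces each chain element of $\mathscr{F}$ to meet infinitely many $W_j$, and a Loewner modulus lower bound forces $\diam_M$ of these chain elements to be large, producing a type~(b) prime end with $x\in I(\mathscr{F})$ (contradicting~(ii)) or driving these diameters to blow up entirely (contradicting~(i)). Case~(b) is handled analogously: each $V_n$ carries a singleton prime end whose impression lies in $\overline{V_n}\cap\partial\Omega$; the resulting sequence clusters by~(iii) to a limit prime end whose impression contains $x$, and the same Loewner analysis promotes this limit to a type~(b) prime end, again violating~(ii).

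The main obstacle will be the reverse direction in case~(a): one must exploit the Loewner hypothesis quantitatively to show that pairwise disjoint components of $B(x,r)\cap\Omega$ each touching $x$ are forced apart in the Mazurkiewicz metric, and consequently that any chain representing the limit prime end $\mathscr{F}$ has $\dum$-unbounded elements while still retaining $x$ in its impression. Balancing the nonempty impression at a finite boundary point against $\dum$-unboundedness of the chain elements is where the Ahlfors regularity and $Q$-Poincar\'e inequality hypotheses of Section~3 become essential.
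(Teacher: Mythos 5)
Your forward direction follows the paper's own argument closely: the pigeonhole/diagonalization for (i), the appeal to Lemma~\ref{lem:end-div-prime-finite-conn} for (ii), and the case split for (iii) are essentially what the paper does, and they work.

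The reverse direction has a genuine gap. You propose to manufacture, inside each component $W_j$ (or $V_n$) of $B(x,r)\cap\Omega$, a singleton prime end, and then pass to a limit of these prime ends. In your case (b) this fails at the first step: a component $V_n$ of $B(x,r)\cap\Omega$ with $x\notin\overline{V_n}$ need not satisfy $\overline{V_n}\cap\partial\Omega\ne\emptyset$ at all (it may be compactly contained in $\Omega$, meeting only $\partial B(x,r)$), so there is no reason it ``carries'' any prime end. In case (a) the prime ends $\mathscr{E}_j$ with $I(\mathscr{E}_j)=\{x\}$ do exist, but not for the reason you give: the Loewner property is irrelevant, and what one actually needs is that $W_j$, being an open connected subset of a locally path-connected space contained in $B(x,r)$, is automatically $\dum$-bounded (any two of its points are joined by a path inside $W_j$), so that condition~(i) produces a Mazurkiewicz boundary point over $x$ and the balls $B_M(\zeta_j,2^{-n})\cap\Omega$ supply the chain. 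Likewise the ``main obstacle'' you identify --- needing the Loewner hypothesis to force distinct components apart in $\dum$ --- is a non-issue: if $y_j\in W_j\cap B(x,r/2)$ and $y_l\in W_l\cap B(x,r/2)$ with $j\ne l$, any continuum in $\Omega$ containing both must leave $B(x,r)$ (otherwise it would be a connected subset of $B(x,r)\cap\Omega$ meeting two distinct components), whence $\dum(y_j,y_l)\ge r/2$ for purely metric reasons. The paper's reverse direction sidesteps all of this by working with the points $y_j$ themselves rather than with prime ends: the $y_j$ are pairwise $\dum$-separated, so by~(i) they admit no $\dum$-convergent subsequence; by~(iii) a subsequence converges in $\overline{\Omega}^P$ to a prime end $\ms{E}$; since $\{y_j\}$ is $d_X$-bounded, a further subsequence converges in $X$ to some $w\in\partial\Omega$, which must lie in $I(\ms{E})$, so $\ms{E}$ is not an end at $\infty$; by~(ii) it is of type~(a), so its chain elements are eventually bounded open connected sets and hence $\dum$-bounded --- trapping a tail of the $\dum$-separated sequence in a $\dum$-precompact set, a contradiction. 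Note that none of the Ahlfors regularity, Poincar\'e inequality, or Loewner hypotheses enter the reverse direction; their prominence in your plan is a sign the argument is aimed at the wrong difficulty. I recommend replacing the prime-end-per-component construction with this direct argument on points.
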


\begin{proof}
We first show that $\overline{\Omega}^M$ is proper. It suffices to show that bounded sequences in
$\Omega$ have a convergent subsequence converging to a point in $\overline{\Omega}^M$.  
Let $\{x_n\}$ be a bounded sequence (with respect  to the 
Mazurkiewicz metric $\dum$) in $\Omega$. Then it is a bounded sequence in $X$, and as $X$ is proper,
it has a subsequence, also denoted $\{x_n\}$, and a point $x_0\in X$ such that 
$\lim_nx_n=x_0$, the limit occurring with respect to the metric $d_X$. If $x_0\in\Omega$, then by
the local path-connectivity of $X$, we also have that the limit occurs with respect to $\dum$ as well.
If $x_0\not\in\Omega$, then $x_0\in\partial \Omega$. Since $\Omega$ is finitely connected at the boundary,
it follows that there are only finitely many components $U_1(1),\cdots, U_{k_1}(1)$ of $B(x_0,1)\cap\Omega$
with $x_0\in\partial U_j(1)$ for $j=1,\cdots, k_1$, and we also have that 
$B(x_0,\rho_1)\cap\Omega\subset\bigcup_{j=1}^{k_1}U_j(1)$ for some $\rho_1>0$. We have a choice of some
$j_1\in\{1,\cdots,k_1\}$ such that infinitely many of the terms in the sequence $\{x_n\}$ lie in
$U_{j_1}(1)$. this gives us a subsequence $\{x_n^1\}$ with $\dum(x_n^1,x_m^1)\le 1$. 
Similarly, we have finitely many components $U_1(2),\cdots, U_{k_2}(2)$ of $B(x_0,2^{-1})\cap\Omega$,
with $x_0\in\partial U_j(2)$ for $j=1,\cdots, k_2$, and we also have that 
$B(x_0,\rho_2)\cap\Omega\subset\bigcup_{j=1}^{k_2}U_j(2)$ for some $\rho_2>0$.
We then find a further subsequence $\{x_n^2\}$ of the sequence $\{x_n^1\}$ lying entirely in
$U_{j_2}(2)$ for some $j_2\in\{1,\cdots,k_2\}$. Note that $\dum(x_n^2,x_m^2)\le 2^{-1}$.
Inductively, for each $l\in\N$ with $l\ge 2$ 
we can find a subsequence $\{x_n^l\}$ contained within a component
of $B(x_0,2^{-l})\cap\Omega$ with that component containing $x_0$ in its boundary, such that
$\{x_n^l\}$ is a subsequence of $\{x_n^{l-1}\}$. We then have $\dum(x_n^l,x_m^l)\le 2^{-l}$.
Now a Cantor diagonalization argument gives a subsequence $\{y_l\}$ of the original sequence $\{x_n\}$
such that for each $l\in\N$, $\dum(y_l,y_{l+1})\le 2^{-l}$, that is, this subsequence is Cauchy with
respect to the Mazurkiewicz metric $\dum$ and so is convergent to a point in $\overline{\Omega}^M$.

We next verify that there cannot be any prime end of type~(b). Suppose $\ms{E}$ is a prime end of
$\Omega$ with impression $I(\ms{E})$ containing more than one point. Then by 
Lemma~\ref{lem:end-div-prime-finite-conn} there is a singleton prime end $\ms{G}$ with
$\ms{G}\, |\, \ms{E}$, and as the impression of $\ms{E}$ is not a singleton set, we have
$\ms{G}\ne\ms{E}$, violating the primality of $\ms{E}$. Thus there cannot be a prime end of type~(b),
nor a non-singleton prime end of type~(a).

To verify that $\overline{\Omega}^P$ is sequentially compact, we first consider sequences
$\{x_k\}$ in $\Omega$. If this sequence is bounded in the metric $d_X$, then
by the properness of $X$ there is a subsequence $\{x_{n_j}\}$ that converges to a
point $x_0\in\overline{\Omega}$. If $x_0\in\Omega$, then that subsequence converges to $x_0$ also
in $\overline{\Omega}^P$. If $x_0\in\partial\Omega$, then using the finite connectivity of
$\Omega$ at $x_0$ we can find a singleton prime end $\ms{E}=[\{E_k\}]$ with $\{x_0\}=I(\ms{E})$
such that for each $k\in\N$ there is an infinite number of positive integers $j$ for which 
$x_{n_j}\in E_k$. Now a Cantor-type diagonalization argument gives a further subsequence that converges
in $\overline{\Omega}^P$ to $\ms{E}$. If the sequence is not bounded in $d_X$, then we argue as follows. 
Fix $z\in\Omega$. Then
for each $n\in\N$ there are only finitely many components of $\Omega\setminus B_M(z,n)$ that intersect
$\Omega\setminus B_M(z,n+1)$ (for if not, then we can find a sequence of points $y_j$ in the unbounded
components of $\Omega\setminus B(z,n)$,  with no two belonging to the same unbounded
component, such that $\dum(z,y_j)=n+\tfrac12$ and $\dum(y_j,y_l)\ge \tfrac12$ for $j\ne l$; then $\{y_j\}$ would be
a bounded sequence with respect to $d_X$, and the properness of $X$ gives us a subsequence 
that converges to some $y_0\in\partial\Omega$, and $\Omega$ would then not be finitely connected at $y_0$).
Hence there is an unbounded component $U_1$ of $\Omega\setminus B_M(z,1)$ that contains $x_k$ for
infinitely many $k\in\N$, that is, there is a subsequence $\{x^1_j\}$ of $\{x_k\}$ that is contained in $U_1$.
There is an unbounded component $U_2$ of $\Omega\setminus B_M(z,2)$ that contains
$x^1_j$ for infinitely many $j\in\N$, and so we obtain a further subsequence $\{x^2_j\}$ lying in $U_2$.
Since this further subsequence also lies in $U_1$, it follows that $U_2\subset U_1$. Thus inductively we
find unbounded components $U_n$ of $\Omega\setminus B_M(z,n)$ and subsequence $\{x^n_j\}$ of
$\{x^{n-1}_j\}$ such that $x^n_j\in U_n$; moreover, $U_n\subset U_{n-1}$. If $\bigcap_n\overline{U_n}$
is non-empty, then $\bigcap_n\overline{U_n}\subset\partial\Omega$ (because for every point $w\in\Omega$
we know from the local connectivity of $X$ that $\dum(z,w)<\infty$), and $\Omega$ will not be 
finitely connected at each point in $\bigcap_n\overline{U_n}$. It follows that we must have
$\bigcap_n\overline{U_n}=\emptyset$. Therefore $\{U_n\}$ is an end at $\infty$ for $\Omega$, and 
by considering the subsequence $z_n=x^n_n$ of the original sequence $\{x_k\}$, we see that
$\{z_n\}$ converges in $\overline{\Omega}^P$ to $\{U_n\}$. This concludes the proof that
sequences in $\Omega$ have a subsequence that converges in $\overline{\Omega}^P$.

Next, if we have a sequence $\ms{E}^n$ of points in $\partial_P\Omega$ that are all singleton
prime ends (that is, they have a representative in $\partial_M\Omega$), then we  can use
the Mazurkiewicz metric to choose $\{E^n_k\}\in\ms{E}^n$ such that for each $n,k\in\N$ we have
$\diam_M(E^n_k)<2^{-(k+n)}$, and choose $y_n=x^n_n\in E^n_n$ to obtain a sequence in $\Omega$.
The above argument then gives a subsequence $y_{n_j}$ and $\ms{F}\in\partial_P\Omega$ such that
$y_{n_j}\to\ms{F}$ as $j\to\infty$ in the prime end topology. If $\ms{F}$ is a singleton prime 
end, then the properness of $\overline{\Omega}^M$ implies that $\ms{E}^{n_j}\to\ms{F}$.
If $\ms{F}$ is not a singleton prime end, then it is an end at $\infty$ (because we have shown
that there are no ends of type~(b)), in which case we can find a sequence of positive
real numbers $R_k\to\infty$ and $\{F_k\}\in\ms{F}$ such that$F_k$ is a component of
$\Omega\setminus \overline{B_M(x_0,R_k)}$ for some fixed $x_0\in\Omega$, see 
Lemma~\ref{lem:standard-rep}. Since we can find $L\in\N$ such that $R_{k+L}-R_k>1$, and as
$y_j\in F_{k+L}$ for sufficiently large $j\in\N$, it follows that $E^{n_j}_{n_j}\subset F_k$.
Therefore $\ms{E}^{n_j}\to\ms{F}$. Finally, if $\ms{E}^n$ are all ends at $\infty$, then 
from the second part of Lemma~\ref{lem:standard-rep} we can fix $x_0\in\Omega$ and
choose $\{E^n_k\}\in\ms{E}^n$ with $E^n_k$ a component of $\Omega\setminus\overline{B_M(x_0,k)}$.
For each $k\in\N$ there are only finitely many components of $\Omega\setminus\overline{B_M(x_0,k)}$
that are unbounded, and so it follows that there is some component $F_1$ of 
$\Omega\setminus\overline{B_M(x_0,1)}$ for which the set
\[
I(1):=\{n\in\N\, :\, E^n_1=F_1\}
\]
has infinitely many terms. 
Note that necessarily $F_2\subset F_1$.
It then follows that there is some component $F_2$ of 
$\Omega\setminus\overline{B_M(x_0,2)}$ such that
\[
I(2):=\{n\in I(1)\, :\, n\ge 2\text{ and }E^n_2=F_2\}
\]
has infinitely many terms. Proceeding inductively, we obtain sets $I(j+1)\subset I(j)$ of infinite
cardinality and $F_j$ such that $\{F_j\}$ is an end at $\infty$, and a sequence $n_j\in\N$
with $j\le n_j\in I(j)$ such that $E^{n_j}_j=F_j$. It follows that $\ms{E}^n$ converges in the 
prime end topology to $\{F_j\}$.

Now we prove that if $\Omega$ satisfies conditions~(i)--(iii), then it is finitely connected at the boundary. Recall the definition of finitely connected at the boundary from 
Definition~\ref{defn:finiteConnBdy}.
Suppose that $x_0\in\partial\Omega$ such that $\Omega$ is not finitely connected at $x_0$. 
Then there is some $r>0$ such that either there are infinitely many components of $B(x_0,r)\cap\Omega$
with $x_0$ in their boundaries, or else there are only finitely many such components $U_1,\cdots, U_k$
such that for all $\rho>0$, $B(x_0,\rho)\setminus\bigcup_{j=1}^kU_j$ is non-empty.
In the first case, $B(x_0,r/2)$ intersects each of those components, and hence infinitely
many components of $B(x_0,r)\cap\Omega$ intersect $B(x_0,r/2)$. In the second case,
we see that $B(x_0,r/2)\setminus \bigcup_{j=1}^kU_j$ is non-empty; in this case, if there are
only finitely many components of $B(x_0,r)\cap\Omega$ that intersect $B(x_0,r/2)$, then 
for each of these components $W$ that are not one of $U_1,\cdots, U_k$, we must have that 
$\dist(x_0,W)>0$, and so the choice of $\rho\le r/2$ as no larger than the minimum of $\dist(x_0,W)$ over
all such $W$ tells us that
$B(x_0,\rho)\cap\Omega\subset\bigcup_{j=1}^kU_j$, contradicting the fact that 
$\Omega$ is not finitely connected at $x_0$. 

From the above two cases, we know that  
there is some $r>0$ such that there are infinitely many components of $B(x_0,r)\cap\Omega$
intersecting $B(x_0,r/2)$. 
Hence we can choose a sequence of points, $y_j\in\Omega$, with no two
belonging to the same component of $B(x_0,r)\cap\Omega$, such that $d(x_0,y_j)=r/2$. Then
we have that for each $j,l\in\N$ with $j\ne l$, $\dum(y_j,y_l)\ge r/2$. Since by condition~(i) we know that
$\overline{\Omega}^M$ is proper, we must necessarily have that $\{y_j\}$ has no bounded subsequence
with respect to the Mazurkiewicz metric $\dum$. By the sequential compactness of $\overline{\Omega}^P$
there must then be a prime end $\ms{E}$, and a subsequence, also denoted $\{y_j\}$, such that
this subsequence converges to $\ms{E}$ in the topology of $\overline{\Omega}^P$. Since $\{y_j\}$ is 
a bounded sequence in $X$, it follows from the properness of $X$ that there is a subsequence converging
to some $w\in\overline{\Omega}$. As $w\not\in\Omega$, we must have that $w\in\partial\Omega$,
and moreover, $w\in I(\ms{E})$. Thus $\ms{E}$ is of type~(a) since by condition~(ii) we have no
type~(b) prime ends. It then follows that when $\{E_k\}\in\ms{E}$, for sufficiently large $k$ the open connected
set $E_k$ must be bounded in $d_X$ and hence in $\dum$. This creates a conflict between 
the fact that the tail end of the sequence $\{y_j\}$ lies in $E_k$ and the fact that $\{y_j\}$ has no
bounded subsequence with respect to the metric $\dum$. Hence $\Omega$ must be finitely connected at $x_0$.
\end{proof}

\begin{ex}
Let $\Omega\subset\R^2$ be given by
\[
\Omega=(0,\infty)\times(0,1)\setminus\bigcup_{k\in\N} [0,k]\times\{2^{-k}\}.
\]
Then $\Omega$ has no type~(b) prime end. Moreover, any sequence $\{x_k\}$ in $\Omega$ that is
bounded in the Mazurkiewicz metric $\dum$ will have to lie in a subset $(0,\infty)\times(2^{-k},1)$
for some $k\in\N$, and so $\overline{\Omega}^M$ is proper. However, $\Omega$ is \emph{not} finitely
connected at the boundary. Thus Conditions~(i),(ii) on their own do not characterize finite connectedness at the boundary.
\end{ex}

\begin{lemma}\label{lem:bdd-sets-to-bdd-sets}
Suppose that $\overline{\Omega}^M$ proper. Let $A,B\subset\Omega$ with
$\dM(\overline{A}^M,\overline{B}^M)=\tau>0$ and $\diam_M(A), \diam_M(B)$ both finite. 
Then either $\diam_M^\prime(f(A))$ is finite or else $\diam_M^\prime(f(B))$ is finite.
\end{lemma}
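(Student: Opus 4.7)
I would argue by contradiction. Assume that both $\diam_M^\prime(f(A))$ and $\diam_M^\prime(f(B))$ are infinite. The strategy is to exploit the \emph{strong} $Q$-Loewner property of $(\Omega^\prime,\dumm,\mu^\prime)$, available via Theorem~\ref{thm:HeiK}, to produce a sequence of curve families in $\Omega^\prime$ whose $Q$-modulus tends to $\infty$, transfer this divergence to $\Omega$ via the geometric quasiconformality of $f$, and then contradict it using the standard annular upper bound on $Q$-modulus available in the Ahlfors $Q$-regular space $(\Omega,\dum,\mu)$.

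First I would perform a reduction to small pieces. Because $\overline{\Omega}^M$ is proper and $\diam_M(A),\diam_M(B)$ are finite, the sets $\overline{A}^M,\overline{B}^M$ are compact in $\overline{\Omega}^M$. I would cover each by finitely many open $\dum$-balls of radius $\delta<\tau/10$ centered at points of $A$ (respectively $B$), producing decompositions $A=\bigcup_{i=1}^k A_i$ and $B=\bigcup_{j=1}^m B_j$ with $\diam_M(A_i),\diam_M(B_j)<\tau/5$ and $\dM(\overline{A_i}^M,\overline{B_j}^M)\ge\tau$. Since the finitely many pairwise distances $\dumm(f(a_i),f(a_{i^\prime}))$ are finite, the assumption $\diam_M^\prime(f(A))=\infty$ forces $\diam_M^\prime(f(A_{i_0}))=\infty$ for some $i_0$; similarly some $f(B_{j_0})$ has infinite $\dumm$-diameter. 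Replacing $A,B$ by $A_{i_0},B_{j_0}$, I may assume henceforth $D_A:=\diam_M(A)<\tau/5$ and $D_B:=\diam_M(B)<\tau/5$.

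Next, fix $a_0\in A$, $b_0\in B$, and set $L:=\dumm(f(a_0),f(b_0))<\infty$. For each $n\in\N$ I would choose $a_n\in A$ and $b_n\in B$ with $\dumm(f(a_n),f(a_0))>n$ and $\dumm(f(b_n),f(b_0))>n$, and by definition of $\dum$ pick continua $\alpha_n,\beta_n\subset\Omega$ with $\{a_0,a_n\}\subset\alpha_n$, $\{b_0,b_n\}\subset\beta_n$, and $\diam(\alpha_n)\le D_A+1/n$, $\diam(\beta_n)\le D_B+1/n$. Then $f(\alpha_n),f(\beta_n)$ are continua in $\Omega^\prime$ of $\diam_M^\prime\ge n$ whose mutual $\dM^\prime$-distance is at most $L$ (since $f(a_0)\in f(\alpha_n)$ and $f(b_0)\in f(\beta_n)$), so their relative distance tends to $0$. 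Strong $Q$-Loewner in $\Omega^\prime$ then gives $\Mod_Q(\Gamma(f(\alpha_n),f(\beta_n),\Omega^\prime))\ge\phi^\prime(L/n)\to\infty$, and geometric quasiconformality of $f$ upgrades this to $\Mod_Q(\Gamma(\alpha_n,\beta_n,\Omega))\to\infty$.

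The contradiction comes from a uniform upper bound on this last modulus. For $n$ large enough, $\alpha_n\subset\overline{B_M(a_0,\tau/4)}$ and $\beta_n\subset\overline{B_M(b_0,\tau/4)}$, and since $\dum(a_0,b_0)\ge\tau$, every curve from $\alpha_n$ to $\beta_n$ must cross the $\dum$-annulus $\{\tau/4\le\dum(a_0,\cdot)\le 3\tau/4\}$. The test function $\rho(x)=\mathbf{1}_{\{\tau/4\le\dum(a_0,x)\le 3\tau/4\}}/(\dum(a_0,x)\log 3)$ is admissible for this family, and by the standard dyadic annulus computation the Ahlfors $Q$-regularity of $(\Omega,\dum,\mu)$ gives $\int\rho^Q\,d\mu\le C(Q,\tau)<\infty$, independently of $n$. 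This uniform bound contradicts the divergence above, completing the proof. The delicate part of the plan is the reduction to small pieces: without ensuring $D_A+D_B$ is small compared to $\tau$, the two balls would overlap and the annulus test function would not be admissible, so the upper-bound half of the argument collapses. The key verification there is the triangle-inequality chase showing that the infinite $\diam_M^\prime$ of $f(A)$ must be inherited by one piece of a finite cover.
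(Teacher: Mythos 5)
Your proof is correct and runs on the same engine as the paper's: assume both image diameters are infinite, produce two sequences of disjoint continua in $\Omega$ that stay in a fixed bounded set and keep mutual $\dum$-distance comparable to $\tau$ (so the connecting curve families have a uniform upper bound on $Q$-modulus by Ahlfors regularity), note that their images have $\dumm$-diameters tending to infinity while their mutual distance stays bounded by a fixed constant (so the strong $Q$-Loewner property of $(\Omega^\prime,\dumm)$ forces the image moduli to blow up), and contradict geometric quasiconformality. The difference is in how the continua are built. The paper chooses unbounded sequences $z_k\in f(A)$, $w_k\in f(B)$, uses properness of $\overline{\Omega}^M$ to extract $\dum$-Cauchy subsequences of their preimages, and chains small continua between consecutive points into growing continua $\Gamma_n$, $\Lambda_n$; the uniform upper bound then comes from the test function $\tfrac{4}{3\tau}\chi_U$ on a fixed bounded set $U$. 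You instead spend the properness hypothesis once at the start, covering the compact sets $\overline{A}^M$, $\overline{B}^M$ by finitely many small balls and passing to a piece whose image still has infinite $\dumm$-diameter; this makes $\diam_M(A)$ and $\diam_M(B)$ small relative to $\tau$, which simultaneously gives disjointness of $\alpha_n,\beta_n$ for large $n$ and admissibility of your annulus test function. Both routes are valid; yours trades the Cauchy-subsequence bookkeeping for a covering argument, and your upper bound could be simplified further (on the annulus one has $\rho\le 4/(\tau\log 3)$ and the annulus lies in a single Mazurkiewicz ball of measure at most $C\tau^Q$, so no dyadic decomposition is needed).
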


\begin{proof}
Suppose that $\diam_M^\prime(f(A))=\infty=\diam_M^\prime(f(B))$. Then we can find two sequences 
$z_k\in f(A)$ and $w_k\in f(B)$ such that $\dumm(z_k,z_{k+1})\ge k$ and $\dumm(w_k,w_{k+1})\ge k$
for each $k\in\N$. Let $x_k=f^{-1}(z_k)$ and $y_k=f^{-1}(w_k)$. By the properness of $\overline{\Omega}^M$,
we can find two subsequences, also denoted $x_k$ and $y_k$, such that these subsequences are Cauchy
in $\dum$. We can also ensure that $\dum(x_k,x_{k+1})<2^{-k}\tau/8$ and $\dum(y_k,y_{k+1})<2^{-k}\tau/8$.
Then for each $k$ there is are continua $\alpha_k, \beta_k\subset\Omega$ with $x_k,x_{k+1}\in\alpha_k$,
$y_k,y_{k+1}\in\beta_k$, and $\max\{\diam_M(\alpha_k),\diam_M(\beta_k)\}<2^{-k}\tau/8$. Since 
$x_k\in A$, $y_k\in B$, and $\dM(\overline{A}^M,\overline{B}^M)=\tau>0$, we must have that 
$\bigcup_k\alpha_k$ and $\bigcup_k\beta_k$ are disjoint with
\[
\dM\left(\bigcup_k\alpha_k, \bigcup_k\beta_k\right)\ge 3\tau/4.
\]
For each $n\in\N$ set $\Gamma_n=\bigcup_{k=1}^n\alpha_k$ and $\Lambda_n=\bigcup_{k=1}^n\beta_k$.
Then $\Gamma_n$ and $\Lambda_n$ are disjoint continua with $\dM(\Lambda_n,\Gamma_n)\ge 3\tau/4$
and 
\[
\Gamma_n\cup\Lambda_n\subset \bigcup_{z\in A\cup B}B(z,\tau+\diam(A)+\diam(B))=:U. 
\]
Hence
\[
\Mod_Q(\Gamma(\Gamma_n,\Lambda_n))\le \int_\Omega\rho^Q\, d\mu<\infty
\]
where $\rho:=\tfrac{4}{3\tau}\chi_U$ is necessarily admissible for the family $\Gamma(\Gamma_n,\Lambda_n)$
of curves in $\Omega$ connecting points in $\Gamma_n$ to points in $\Lambda_n$.

However, $f(\Lambda_n)$ is a continuum containing $z_1$ and $z_n$, and so 
$\diam_M^\prime(f(\Lambda_n))\to\infty$ as $n\to\infty$; a similar statement holds for $f(\Gamma_n)$.
Moreover, $\dM^\prime(f(\Lambda_n),f(\Gamma_n))\le \dumm(z_1,w_1)<\infty$. Hence by the 
$Q$-Loewner property together with the Ahlfors $Q$-regularity of $(\Omega^\prime,\dumm)$ gives us
\[
\Mod_Q(\Gamma(f(\Gamma_n),f(\Lambda_n)))
  \ge \Phi\left(\frac{\dM^\prime(f(\Gamma_n),f(\Lambda_n))}
    {\min\{\diam_M^\prime(f(\Lambda_n)),\diam_M^\prime(f(\Gamma_n))\}}\right)\to\infty\text{ as }n\to\infty.
\]
this violates the geometric quasiconformality of $f$.
\end{proof}

\begin{lemma}\label{lem:bdd-geom-ends}
Suppose that $(\Omega,\dum)$ is Ahlfors $Q$-regular for some $Q>1$ and supports a $Q$-Poincar\'e inequality.
Then every type~(a) prime end of $\Omega$ is a singleton prime end, and $\overline{\Omega}^M$ is proper.
Moreover, $\overline{\Omega}^P$ is sequentially compact.
\end{lemma}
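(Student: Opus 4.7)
I would attack the three assertions in order: properness of $\overline{\Omega}^M$, the singleton property for type~(a) prime ends, and sequential compactness of $\overline{\Omega}^P$.

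\emph{Step 1 (Properness).} Ahlfors $Q$-regularity of $\mu$ on $(\Omega,\dum)$ immediately gives that $(\Omega,\dum)$ is metric doubling. Doubling passes to the metric completion, and a complete doubling metric space is proper, so $\overline{\Omega}^M$ is proper.

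\emph{Step 2 (Type~(a) prime ends are singleton).} Let $\ms{E}=[\{E_k\}]$ be a type~(a) prime end. By Lemma~\ref{lem:open-end} I may take each $E_k$ open, and after passing to a tail I may assume each $\overline{E_k}$ is compact in $X$. Since $E_k$ is connected, $\diam(E_k)=\diam_M(E_k)$, so each $E_k$ is $\dum$-bounded; combined with Step~1, $\overline{E_k}^M$ is compact in $\overline{\Omega}^M$. Hence $K:=\bigcap_k\overline{E_k}^M$ is a nonempty compact subset of $\overline{\Omega}^M$, and a standard compactness argument shows that the canonical projection $\pi:\overline{\Omega}^M\to\overline{\Omega}$ satisfies $\pi(K)=I(\ms{E})$. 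Assuming for contradiction that $I(\ms{E})$ has two distinct points, I pick $\zeta\in K$ and build a prime end strictly dividing $\ms{E}$. By Theorem~\ref{thm:HeiK}, $\overline{\Omega}^M$ is $Q$-Loewner and Ahlfors $Q$-regular, hence linearly locally connected. Using LLC at $\zeta$ and the chain condition $\dM(\Omega\cap\partial E_k,\Omega\cap\partial E_{k+1})>0$, I would inductively choose a nested sequence of connected open neighborhoods $V_n$ of $\zeta$ in $\overline{\Omega}^M$, with $\diam_M V_n\searrow 0$ small enough that $F_n:=V_n\cap\Omega\subset E_{k_n}$ for an increasing $k_n\to\infty$. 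Then $F_n$ is open, connected, $\dum$-bounded, and $\pi(\zeta)\in\overline{F_n}\cap\partial\Omega$, so $F_n$ is acceptable; nested strict containment via LLC gives condition (b) of Definition~\ref{def:chains}; and $\diam F_n\to 0$ forces $\bigcap_n\overline{F_n}=\{\pi(\zeta)\}\subset\partial\Omega$. Since $[\{F_n\}]$ has singleton impression, it is a prime end (by the argument used in the bounded-turning lemma), and by construction $[\{F_n\}]\mid \ms{E}$. Primality of $\ms{E}$ forces $\ms{E}=[\{F_n\}]$, whence $I(\ms{E})=\{\pi(\zeta)\}$, contradicting the assumption.

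\emph{Step 3 (Sequential compactness).} Given $\{p_n\}\subset\overline{\Omega}^P$, I associate to each $p_n$ a point $q_n\in\Omega$ as follows: if $p_n\in\Omega$ set $q_n=p_n$; otherwise pick $q_n$ deep inside some defining chain of $p_n$, in such a way that a prime-end subsequential limit of $\{q_n\}$ lifts to one of $\{p_n\}$. If $\{q_n\}$ is $\dum$-bounded, Step~1 gives a $\dum$-convergent subsequence with limit $q\in\overline{\Omega}^M$; if $q\in\Omega$ we are done, and otherwise $q\in\partial_M\Omega$ corresponds to a singleton prime end via the LLC construction of Step~2. If $\{q_n\}$ is $\dum$-unbounded, fix $x_0\in\Omega$ and apply Lemma~\ref{lem:end-b-is-prime}: only finitely many unbounded components of $\Omega\setminus\overline{B_M(x_0,k)}$ exist for each $k$, and a Cantor-diagonal argument produces a nested sequence of such components forming an end at infinity to which a subsequence of $\{q_n\}$ converges in the prime end topology.

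\textbf{Main obstacle.} The delicate point is the chain construction in Step~2: translating the LLC property in $\overline{\Omega}^M$ into a quantitative guarantee that small connected $\dum$-neighborhoods $V_n$ of $\zeta$ sit inside $E_{k_n}$ in the original metric, rather than leaking through $\Omega\cap\partial E_{k_n}$. This requires combining the LLC modulus of connectivity with the positive chain separation $\dM(\Omega\cap\partial E_k,\Omega\cap\partial E_{k+1})$ to keep $\overline{V_n}^M$ away from $\Omega\cap\partial E_{k_n}$, and then verifying all conditions of Definition~\ref{def:chains} for $\{F_n\}$.
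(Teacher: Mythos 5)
Your overall architecture matches the paper's: properness of $\overline{\Omega}^M$ from Ahlfors regularity plus completeness (Step 1 is verbatim the paper's argument, via \cite[Lemma~4.1.14]{HKSTbook}), a singleton prime end dividing $\ms{E}$ built from small connected neighborhoods of a Mazurkiewicz boundary point, and sequential compactness by splitting into $\dum$-bounded sequences (handled by properness) and $\dum$-unbounded ones (handled by Lemma~\ref{lem:end-b-is-prime} and a Cantor diagonal). The substantive divergence is in Step 2, and it is exactly where you have left a gap: you reach for the Loewner/LLC machinery to manufacture connected neighborhoods $V_n$ of $\zeta$ in $\overline{\Omega}^M$, and then flag as your ``main obstacle'' the problem of forcing $V_n\cap\Omega\subset E_{k_n}$ (and, implicitly, of keeping $V_n\cap\Omega$ connected, which does not follow from connectedness of $V_n$). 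The paper needs none of this. It takes $F_n:=B_M(\zeta,2^{-n})\cap\Omega$, which is connected \emph{automatically} from the definition of $\dum$: any $x$ with $\dum(x,\zeta)<r$ can be joined to a point $x_k$ of a Cauchy sequence representing $\zeta$ by a continuum of diameter $<r$ lying in $B_M(\zeta,r)\cap\Omega$. Then, since $\zeta\in\overline{E_k}^M$ for every $k$, the connected set $F_n$ meets $E_{k+1}$ for every $k$; if it also met $\Omega\setminus E_k$ it would have to meet both $\Omega\cap\partial E_{k+1}$ and $\Omega\cap\partial E_k$, forcing $\diam_M(F_n)\ge\dM(\Omega\cap\partial E_k,\Omega\cap\partial E_{k+1})$. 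So once $2^{-n}$ is below half that separation, $F_n\subset E_k$. No LLC modulus enters; the ``obstacle'' you describe dissolves once the neighborhoods are taken to be Mazurkiewicz balls. As written, your Step 2 is not a complete proof, but the missing step is closable by this elementary diameter-versus-separation argument, which is precisely what the paper does.

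Two smaller points. In Step 3, your claim that the unbounded case always produces an end at $\infty$ is not quite right: the nested unbounded components $F_k$ of $\Omega\setminus\overline{B_M(x_0,k)}$ form an end at $\infty$ only when $\bigcap_k\overline{F_k}=\emptyset$; otherwise Lemma~\ref{lem:end-b-is-prime} gives a type~(b) prime end. Either way it is prime, so sequential compactness is unaffected, but the statement should be corrected. Also, your reduction of a sequence of prime ends to a sequence of representative points $q_n$ ``in such a way that a prime-end subsequential limit of $\{q_n\}$ lifts to one of $\{p_n\}$'' asserts what needs proving; the paper instead runs the component-selection argument directly on the tails of the representing chains (for types (b) and (c)) and uses the bijection between singleton prime ends and $\partial_M\Omega$ (for type (a)), and you should do the same.
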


Thanks to Lemma~\ref{lem:finite-conn-at-bdy-implication}, if the domain $\Omega$ satisfies the hypotheses of the 
above lemma and in addition has no type~(b) ends, then $\Omega$ is finitely connected at the boundary.
As Example~\ref{ex:not-BQS-Plane-Sphere} shows, there are domains $\Omega$ for which 
$\overline{\Omega}^M$ is 
proper and $\overline{\Omega}^P$ is sequentially compact but $\Omega$ is not finitely connected at its boundary.
We do not know whether this is still possible if we also require $(\Omega,\dum)$ to be Ahlfors $Q$-regular for 
some $Q>1$ and support a $Q$-Poincar\'e inequality.

\begin{proof}
The properness of $\overline{\Omega}^M$ is a consequence of the fact that $\mathcal{H}^Q$ (this Hausdorff
measure taken with respect to the Mazurkiewicz metric $\dum$) is doubling and that $\overline{\Omega}^M$ is
complete, see for example~\cite[Lemma~4.1.14]{HKSTbook}.

Let $\ms{E}$ be a prime end of type~(a) for $\Omega$, and let $x_0\in I(\ms{E})$. Let $\{E_k\}\in\ms{E}$ 
such that $E_1$ is bounded (in $d_X$, and hence by the local connectedness of $X$ and the fact that $E_1$ is
open, in the Mazurkiewicz metric $\dum$ as well). Let $\{x_j\}$ be a sequence in $E_1$ 
with $x_j\in E_j$ such that it converges (in the 
metric $d_X$) to $x_0$. Then as this sequence is bounded in $\dum$, it follows from the properness of
$\overline{\Omega}^M$ that there is a subsequence, also denoted $\{x_j\}$, such that this subsequence
converges in $\dum$ to a point $\zeta\in\partial_M\Omega$. For $n\in\N$ 
we claim that $B_M(\zeta,2^{-n})\cap\Omega$ is connected. Indeed, by the definition of Mazurkiewicz
closure, we can find a Cauchy sequence $x_k\in\Omega$ with $\dum(x_k,\zeta)\to 0$.
For $x\in B_M(\zeta,2^{-n})\cap\Omega$ we have $\dum(x,\zeta)<2^{-n}$, and so for sufficiently
large $k$ we also have that $\dum(x,x_k)<2^{-n}$. Thus we can find a continuum in $\Omega$
connecting $x$ to $x_k$ with diameter smaller than $2^{-n}$. Note that each point on this
continuum necessarily then lies in $B_M(\zeta,2^{-n})\cap\Omega$. Now if 
$y\in B_M(\zeta,2^{-n})\cap\Omega$, then we can connect both $x$ and $y$ to $x_k$ for sufficiently large $k$ and hence
obtain a continuum in $B_M(\zeta,2^{-n})\cap\Omega$ connecting $x$ to $y$. Therefore
$B_M(\zeta,2^{-n})\cap\Omega$ is a connected set.
Let $F_n=B_M(\zeta,2^{-n})\cap\Omega$ that contain $x_j$ for sufficiently large $j$. Then
$\{F_n\}$ is a chain for $\Omega$ with $\{x_0\}=I(\{F_n\})$, and so it forms a singleton prime end
$\ms{F}=[\{F_n\}]$. We claim now that $\ms{F}\, |\, \ms{E}$. For each $k\in\N$ we know that
$x_j\in E_{k+1}$ for all $j\ge k+1$; as $\dM(\Omega\cap\partial E_k,\Omega\cap\partial E_{k+1})>0$,
it follows that for large $j$ the connected set $F_j$ contains $x_m$ for all $m\ge j+1$ with 
\[
\diam_M(F_j)<\frac12 \dM(\Omega\cap\partial E_k,\Omega\cap\partial E_{k+1});
\]
hence $F_j\subset E_k$ for sufficiently large $j$. Now the primality of $\ms{E}$ tells us that
$\ms{E}=\ms{F}$, that is, $\ms{E}$ is a singleton prime end.

Now we show that $\overline{\Omega}^P$ is sequentially compact. If $\{x_j\}$ is a sequence in $\Omega$
that is bounded in the Mazurkiewicz metric $\dum$, then by the properness of $\overline{\Omega}^M$ we see
that there is a subsequence that converges to a singleton prime end or to a point in $\Omega$.
Similarly, if we have a sequence of singleton prime ends that is bounded with respect to $\dum$ (and recall that
singleton prime ends correspond to points in the Mazurkiewicz boundary $\partial_M\Omega$, see~\cite{BBS2}),
then we have a convergent subsequence. It now only remains to consider sequences $\{\zeta_j\}$ from
$\overline{\Omega}^P$ such that the sequence is either an unbounded (in $\dum$) sequence of points from
$\overline{\Omega}^M$ or is a sequence of prime ends that are of types~(b) and~(c). In this case, we fix
$x_0\in\Omega$ and construct a prime end $[\{F_k\}]$ as follows. With $n_k=k$ in Lemma~\ref{lem:end-b-is-prime},
we choose $F_1$ to be the unbounded component of $\Omega\setminus \overline{B_M(x_0,1)}$
that contain infinitely many points from $\{\zeta_j\}$ if this sequence consist of points in $\overline{\Omega}^M$,
or tail-end of the chains $\{E_k^j\}\in\zeta_j$ for infinitely many $j$. We then choose $F_2$ to be the
unbounded component of $\Omega\setminus \overline{B_M(x_0,2)}$ that contain infinitely many of the points
$\{\zeta_j\}$ that are in $F_1$ if $\{\zeta_j\}\subset\overline{\Omega}^M$, or
 tail-end of the chains $\{E_k^j\}\in\zeta_j$ for infinitely many $j$ that also gave a tail-end for $F_1$. 
 Proceeding inductively, we obtain a chain $\{F_k\}$; by Lemma~\ref{lem:end-b-is-prime} we know that
 $[\{F_k\}]$ is a prime end of $\Omega$, and by the above construction and by a Cantor diagonalization,
 we have a subsequence of $\{\zeta_j\}$ that converges to $[\{F_k\}]$.
\end{proof}

Chains in prime ends of type~(b) have corresponding separating sets $R_k$, but no control over the
Mazurkiewicz diameter of the 
boundaries of the acceptable sets that make up the chains. The following lemma and its corollary
give us a good representative chain in the prime end of type~(b) that gives us control over the
boundary of the acceptable sets as well as dispenses with the need for the separating sets $R_k$.

\begin{lemma}\label{lem:standard-rep-b}
Let $\ms{E}$ be an end of type~(b) of $\Omega$.  Let $\{E_k\}$ be a representative chain 
for $\ms{E}$ and let $R_k$ be as given in Definition \ref{def:type-ab-ends}.  
Then, there is a representative chain $\{F_k\}$ of $\ms{E}$ and compact sets $S_k \subset X$  with the following properties:
\begin{enumerate}
\item[(i)] $F_k$ is a component of $\Omega \setminus S_k$,
\item[(ii)] $\diam_M(S_k \cap \Omega) < \infty$,
\item[(iii)] $\dist_M(S_k \cap \Omega, S_{k+1} \cap \Omega) > 0$.
\end{enumerate}
Furthermore, there are corresponding separating sets $R_k^F$ from 
Definition~\ref{def:type-ab-ends} that have the properties that 
$\dist_M(R_k^F \cap \Omega, R_{k+1}^F \cap \Omega) > 0$ and $\diam_M(R_k^F \cap \Omega) < \infty$.  
\end{lemma}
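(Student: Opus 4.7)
The plan is to pass to a suitably spaced subchain of $\{E_k\}$ after sharpening each separating set by means of Lemma~\ref{lem:E1-E2-separation}. By Lemma~\ref{lem:open-end} I may assume every $E_k$ is open in $\Omega$. For each $k$, applying Lemma~\ref{lem:E1-E2-separation} to the triple $(E_k,E_{k+1},R_k)$ produces the compact set $\widetilde R_k:=R_k\cap\overline{E_k}\setminus E_{k+1}$ together with a component $\widehat C_k$ of $\Omega\setminus\widetilde R_k$ satisfying $E_{k+1}\subset\widehat C_k\subset E_k$. Since $R_k$ separates $\Omega\cap\partial E_k$ from $\Omega\cap\partial E_{k+1}$, neither of those boundaries meets $R_k$, which yields the key structural containment
\[
\widetilde R_k\cap\Omega=R_k\cap(E_k\setminus E_{k+1})\subset E_k\setminus E_{k+1}.
\]

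I then define $F_k:=\widehat C_{2k}$ and $S_k:=\widetilde R_{2k}$. Property~(i) is immediate, and property~(ii) follows from $S_k\cap\Omega\subset R_{2k}\cap\Omega$ together with the hypothesis $\diam_M(R_{2k}\cap\Omega)<\infty$; the nesting $E_{2k+1}\subset F_k\subset E_{2k}$ shows that $\{F_k\}$ is a chain equivalent to $\{E_k\}$. The heart of the argument is~(iii). The displayed containment gives $S_k\cap\Omega\subset E_{2k}\setminus E_{2k+1}$ and, one index later, $S_{k+1}\cap\Omega\subset E_{2k+2}\setminus E_{2k+3}\subset E_{2k+2}$. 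A continuum $\gamma\subset\Omega$ joining $S_k\cap\Omega$ to $S_{k+1}\cap\Omega$ must therefore pass from outside $E_{2k+1}$ into $E_{2k+2}\subset E_{2k+1}$, meeting $\Omega\cap\partial E_{2k+1}$ at some point $p$, and similarly must meet $\Omega\cap\partial E_{2k+2}$ at some point $q$; hence
\[
\diam(\gamma)\ge\dum(p,q)\ge\dM(\Omega\cap\partial E_{2k+1},\Omega\cap\partial E_{2k+2})>0,
\]
as required.

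For the \emph{Furthermore} statement I would take $R_k^F:=\widetilde R_{2k+1}$. Each $R_k^F$ is a compact subset of $R_{2k+1}$, giving $\diam_M(R_k^F\cap\Omega)<\infty$, and exactly the same crossing-boundary argument (now using the pair $(E_{2k+2},E_{2k+3})$) yields $\dM(R_k^F\cap\Omega,R_{k+1}^F\cap\Omega)>0$. That $R_k^F$ is a separating set for $F_k,F_{k+1}$ in the sense of Definition~\ref{def:type-ab-ends}(b) follows from two observations: $\Omega\cap\partial F_{k+1}\subset S_{k+1}\subset E_{2k+2}\subset\widehat C_{2k+1}$, a single component of $\Omega\setminus R_k^F$, whereas $\Omega\cap\partial F_k\subset S_k\subset E_{2k}\setminus E_{2k+1}$ is disjoint from $\widehat C_{2k+1}\subset E_{2k+1}$, so no component of $\Omega\setminus R_k^F$ meets both boundaries.

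The step I expect to require the most care is~(iii): it is precisely the point that forces the use of a stride-$2$ subchain, so that a continuum joining $S_k$ to $S_{k+1}$ is compelled to cross two $\dum$-separated boundaries rather than just one. If a stronger single-component version of Definition~\ref{def:type-ab-ends}(b) is required for the $R_k^F$, I would apply Lemma~\ref{lem:E1-E2-separation} once more to the triple $(F_k,F_{k+1},R_k^F)$, trimming $R_k^F$ to $R_k^F\cap\overline{F_k}\setminus F_{k+1}$; this preserves both the finite $\dum$-diameter and the positive $\dM$-separation established above.
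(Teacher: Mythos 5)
Your proposal is correct and follows essentially the same route as the paper: both pass to a stride-two subchain and use the separating sets trimmed via Lemma~\ref{lem:E1-E2-separation} (the paper's $T_k=\overline{(R_k\cap\Omega\setminus E_{k+1})\cap E_k}$ agrees with your $\widetilde R_k$ on $\Omega$), together with the same argument that a continuum joining $S_k\cap\Omega$ to $S_{k+1}\cap\Omega$ must cross both $\Omega\cap\partial E_{2k+1}$ and $\Omega\cap\partial E_{2k+2}$. The paper likewise only verifies the ``no common component'' form of Definition~\ref{def:type-ab-ends}(b) for the sets $R_k^F=T_{2k+1}$, so your closing caveat is not needed to match its level of rigor.
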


\begin{proof}
Let 
\[
T_k = \overline{(R_k \cap \Omega \setminus E_{k+1}) \cap E_k}.
\]  
We see $E_{k+1}$ belongs to a single component of $\Omega \setminus T_k$ because 
$E_{k+1}$ is connected and $E_{k+1} \cap T_k = \emptyset$.  This follows as 
$E_{k+1}$ is open in $X$ and $(R_k \cap \Omega \setminus E_{k+1}) \cap E_k = \emptyset$.  
We also see that $T_k \cap \Omega \subset E_k$ because $R_k \cap \partial E_k = \emptyset$ 
and so $T_k\cap\Omega=(R_k\cap E_k)\setminus E_{k+1}$. 

By Lemma~\ref{lem:E1-E2-separation},
we observe that there are no points $x \in \partial \Omega\cap E_k$ and 
$y\in\Omega\cap\partial E_{k+1}$ that belong to the same component of $\Omega \setminus T_k$. 
%

Let $S_k = T_{2k}$ and let $F_k$ be the component of $\Omega \setminus S_k$ containing 
$E_{2k + 1}$.  The sets $S_k$ are compact as they are closed subsets of the sets $R_{2k}$.  
We first verify properties (i) - (iii) and then show that $\{F_k\}$ is a chain 
equivalent to $\{E_k\}$.  Property (i) follows immediately from the definition of $F_k$.  
Property (ii) follows as $S_k \subseteq R_{2k}$ and $\diam_M(R_{2k}) < \infty$.  For 
property (iii), let $x \in S_k \cap \Omega$ and $y \in S_{k+1} \cap \Omega$.  
Let 
$E \subset \Omega$ be a continuum containing $x$ and $y$. 
As $x \in S_k$, we have $x \notin E_{2k+1}$.  As $y \in S_{k+1}$, we have 
$y \in E_{2k + 1}$, so there must exist a point $z_1 \in E \cap \partial E_{2k+1}$.  
Similarly, $x \notin E_{2k+2}$ and $y \in E_{2k+2}$, so there is a point 
$z_2 \in E \cap \partial E_{2k+2}$.  Hence, 
\[
0 < \dist_M(\partial E_{2k+1} \cap \Omega, \partial E_{2k+2} \cap \Omega) \leq \dum(z_1, z_2). 
\]
and, as $x,y$ are arbitrary, we have $\dist_M(S_k \cap \Omega, S_{k+1} \cap \Omega) > 0$.  

It remains to show that $F_k$ is a chain of type~(b) that is equivalent to $\{E_k\}$.  
We first note that $F_{k+1} \subset E_{2k + 1} \subset F_k$, from which equivalence follows.  
Indeed, $E_{2k+1} \subset F_k$ follows immediately from the definition of $F_k$.  
To see that $F_{k+1} \subset E_{2k+1}$, suppose that there is a point $y\in F_{k+1}\setminus E_{2k+1}$.
Let $x\in E_{2k+3}\subset F_{k+1}$;
then by the connectedness of the open set $F_{k+1}$ together with local connectedness of $\Omega$,
there is a continuum $\gamma\subset F_{k+1}$ containing the points $x,y$. Then as $x\in E_{2k+1}$ and
$y\not\in E_{2k+1}$, $\gamma$ intersects $\Omega\cap\partial E_{2k+1}$. Moreover, as $x\in E_{2k+2}$
and $y\not\in E_{2k+2}$, we must also have that $\gamma$ intersects $\Omega\cap \partial E_{2k+2}$.
Then as $T_{2k+1}$ separates $\Omega\cap\partial E_{2k+1}$ from $\Omega\cap\partial E_{2k+2}$, it follows
that $\gamma$ intersects $T_{2k+1}$, contradicting the fact that $\gamma\subset F_{k+1}$.
%
%

Properties (a) and (d) in Definition \ref{def:type-ab-ends} follow immediately from the above observation.  Property (c) follows from (iii) as 
$\Omega\cap\partial F_k\subseteq S_k \cap \Omega$. 
For property (b), we use the sets $R_k^F = T_{2k+1}$.  Compactness of $R_k^F$ follows 
as before: $R_k^F$ is a closed subset of the compact set $R_{2k+1}$.  We see 
$\diam_M(R_k^F \cap \Omega) \leq \diam_M(R_{2k+1} \cap \Omega) < \infty$.  
The separation property of $R_k^F$ is similar to the separation property 
for the sets $T_k$: let $x \in \partial F_k$ and $y \in \partial F_{k+1}$ 
and suppose $\gamma$ is a path in $\Omega \setminus T_{2k+1}$ with 
$\gamma(0) = x$ and $\gamma(1) = y$.  Then, $y \in T_{2k+2}$ so 
$y \in E_{2k+2}$ and $x \in T_{2k}$ so $x \notin E_{2k+1}$.  Thus, 
$\gamma$ contains points in $\partial E_{2k+1}$ and $\partial E_{2k+2}$.  
Considering an appropriate subpath of $\gamma$ as before leads to a 
point in $\gamma \cap T_{2k+1}$, a contradiction.

The inequality $\dist_M(R_k^F \cap \Omega, R_{k+1}^F \cap \Omega) > 0$ 
follows as in the proof that $\dist_M(S_k \cap \Omega, S_{k+1} \cap \Omega) > 0$.  
The inequality $\diam_M(R_k^F \cap \Omega) < \infty$ follows as 
$R_k^F \cap \Omega \subseteq R_{2k+1}$.
\end{proof}

The following corollary is a direct consequence of the above lemma and its proof.

\begin{cor}\label{cor:Reduce-b}
If $\{E_k\}$ is a representative chain of a type~(b) prime end of $\Omega$, then
there is a sequence $\{F_k\}$ of unbounded open connected subsets of $\Omega$ such that
\begin{enumerate}
  \item[(a)] $F_{k+1}\subset F_k$ for each $k\in\N$,
  \item[(b)] $\diam_M(\Omega\cap\partial F_k)<\infty$ for each $k\in\N$,
  \item[(c)] $\dM(\Omega\cap\partial F_k,\Omega\cap\partial F_{k+1})>0$ for each $k\in\N$,
  \item[(d)] $\emptyset\ne\bigcap_k\overline{F_k}\subset\partial\Omega$,
 \end{enumerate} 
and in addition, for each $k\in\N$ we can find $j_k,i_k\in\N$ such that 
\[
 F_{j_k}\subset E_k\, \text{ and }\, E_{i_k}\subset F_k.
\]
Moreover, if $\{F_k\}$ is a sequence of unbounded open connected subsets of $\Omega$ 
that satisfy Conditions~(a)--(d) listed above, then $\{F_{2k}\}$ is a representative chain
of a type~(b) prime end of $\Omega$.
\end{cor}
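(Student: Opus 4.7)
The forward half of the corollary is essentially a reformulation of the conclusion of Lemma~\ref{lem:standard-rep-b}. My plan is to apply that lemma to the chain $\{E_k\}$ to produce the sequence $\{F_k\}$ together with compact sets $S_k$ satisfying properties (i)--(iii) there. Each $F_k$ is then open and connected (as a component of the open set $\Omega\setminus S_k$) and unbounded (because the equivalence $\{F_k\}\sim\{E_k\}$ forces some unbounded $E_{j_k}$ to sit inside $F_k$). Condition~(a) and condition~(d) are read off the chain property of $\{F_k\}$ together with this equivalence, which in particular yields the identical impressions. Condition~(b) follows from the inclusion $\Omega\cap\partial F_k\subset S_k\cap\Omega$ combined with property~(ii), and condition~(c) from the same inclusion combined with property~(iii). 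The indices $j_k$ and $i_k$ are exactly the divisibility witnesses for $\{F_k\}\sim\{E_k\}$.

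For the converse, my plan is to verify the clauses of Definition~\ref{def:type-ab-ends} for the sequence $\{F_{2k}\}$. Clauses~(a) and~(d) are immediate from conditions~(a) and~(d) of the corollary. The key clause~(b) requires a compact separating set; I would take
\[
R_k:=\overline{\Omega\cap\partial F_{2k+1}}^{\,X}.
\]
Condition~(b) of the corollary together with $d_X\le\dum$ bounds $\diam(\Omega\cap\partial F_{2k+1})$ in $X$, and properness of $X$ then gives compactness of $R_k$. Since $\partial F_{2k+1}$ is closed in $X$, the inclusion $R_k\cap\Omega\subset\Omega\cap\partial F_{2k+1}$ holds, so $\diam_M(R_k\cap\Omega)<\infty$. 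One then checks the decomposition
\[
\Omega\setminus R_k=F_{2k+1}\sqcup\bigl(\Omega\setminus\overline{F_{2k+1}}\bigr)
\]
into disjoint open sets. The component $F_{2k+1}$ contains $\Omega\cap\partial F_{2k+2}$ (using $F_{2k+2}\subset F_{2k+1}$ together with condition~(c), which forces $\Omega\cap\partial F_{2k+2}$ off $\partial F_{2k+1}$), while every component of $\Omega\setminus\overline{F_{2k+1}}$ meeting $\Omega\cap\partial F_{2k}$ is disjoint from $F_{2k+1}$ (using openness of $F_{2k+1}\subset F_{2k}$ and again condition~(c)). Clause~(c) of the definition, namely $\dM(\Omega\cap\partial F_{2k},\Omega\cap\partial F_{2(k+1)})>0$, follows from a connectivity argument: any continuum in $\Omega$ joining the two boundaries must cross $\Omega\cap\partial F_{2k+1}$, whence its diameter is bounded below by $\dM(\Omega\cap\partial F_{2k},\Omega\cap\partial F_{2k+1})>0$.

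The main obstacle is showing that $[\{F_{2k}\}]$ is \emph{prime}. My strategy here is to take an arbitrary chain $\{G_k\}$ dividing $\{F_{2k}\}$ and, after the standard reduction $G_k\subset F_{2k}$, invoke Lemma~\ref{lem:E1-E2-separation} with the separating sets $R_k$ constructed above to force each $G_k$ to eventually contain some $F_{2n_k}$, yielding the reverse divisibility. The delicate step is ruling out type~(a) divisors concentrated in a bounded portion of some $F_{2k_0}$ and type~(c) divisors escaping to infinity faster than $\{F_{2k}\}$; for both one has to exploit the finite Mazurkiewicz diameter of $\Omega\cap\partial F_{2k+1}$ together with the fact that each $F_{2k}$ is unbounded, adapting the primality argument from Lemma~\ref{lem:end-b-is-prime}.
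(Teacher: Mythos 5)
Your forward direction and your verification that $\{F_{2k}\}$ satisfies Definition~\ref{def:type-ab-ends} are correct, and they follow exactly the route the paper intends: the paper offers no proof beyond the remark that the corollary ``is a direct consequence of the above lemma and its proof,'' and what you write is that remark made explicit. Reading (a)--(d) off properties (i)--(iii) of Lemma~\ref{lem:standard-rep-b} together with the sandwich $F_{k+1}\subset E_{2k+1}\subset F_k$ is right, and for the converse your choice $R_k=\overline{\Omega\cap\partial F_{2k+1}}$ works: it is compact since $d_X\le \dum$ makes $\Omega\cap\partial F_{2k+1}$ bounded in the proper space $X$, it satisfies $R_k\cap\Omega=\Omega\cap\partial F_{2k+1}$ because $\partial F_{2k+1}$ is closed, and condition~(c) places $\Omega\cap\partial F_{2k+2}$ inside the component $F_{2k+1}$ and $\Omega\cap\partial F_{2k}$ outside $\overline{F_{2k+1}}$. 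This is also all the paper ever extracts from the corollary: in Case~2 of Lemma~\ref{lem: ends-to-ends} it is invoked only to conclude that the image sequence is a chain of an \emph{end} of type~(b), with primality of image ends handled separately in Theorem~\ref{thm:main-geomQC}.

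The gap is exactly where you flag it: primality of $[\{F_{2k}\}]$ in the converse. Your proposed adaptation of Lemma~\ref{lem:end-b-is-prime} does not transfer, because the contradiction there rests on each $F_j$ being a component of $\Omega\setminus\overline{B_M(x_0,n_j)}$ with $n_j\to\infty$, so that any set meeting every $F_j$ automatically has infinite Mazurkiewicz diameter; that is what kills a proper divisor $\{G_k\}$ whose separating sets have $\diam_M<\infty$. Conditions (a)--(d) bound $\diam_M(\Omega\cap\partial F_k)$ but say nothing about the sets $F_k$ themselves receding to infinity in $\dum$, so a Mazurkiewicz-bounded set can a priori meet every $F_{2j}$ and no contradiction results. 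Worse, if such a set exists then, using properness of $\overline{\Omega}^M$ (Lemma~\ref{lem:bdd-geom-ends}) and the connectedness of small Mazurkiewicz balls, one extracts a point $\zeta\in\partial_M\Omega$ whose singleton prime end divides $[\{F_{2k}\}]$, so the end is \emph{not} prime. Thus primality genuinely needs extra input beyond (a)--(d). Note, however, that the paper is in the same position: the word ``prime'' in the converse is asserted without argument and nothing downstream depends on it. Your proof therefore establishes everything the paper proves and uses; to get the literal statement you must either supply the missing primality argument (e.g.\ by adding the hypothesis that no $\dum$-bounded set meets every $F_k$) or read ``prime end'' in the converse as ``end.''
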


\begin{lemma}\label{image separation inequality}
Suppose that both $(\Omega,\dum)$ and $(\Omega^\prime,\dumm)$ are Ahlfors $Q$-regular for some
$Q>1$ and that they both support a $Q$-Poincar\'e inequality with respect to the respective 
Ahlfors regular
Hausdorff measure. Let $\ms{E}$ be an end of any type of $\Omega$.  
Let $\{E_k\} \in \ms{E}$ be a chain with $\diam_M(\Omega\cap\partial E_k) < \infty$ 
for all $k$ and let $F_k = f(E_k)$.  Then,
\begin{equation}\label{eq:positive-dist-bdy}
\dM^\prime(\Omega^\prime\cap\partial F_k,\Omega^\prime\cap\partial F_{k+1})>0.
\end{equation}
\end{lemma}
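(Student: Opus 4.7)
I will argue by contradiction. Suppose $\dM^\prime(\Omega^\prime\cap\partial F_k, \Omega^\prime\cap\partial F_{k+1}) = 0$. For each $n\in\N$ I pick a continuum $\gamma_n\subset\Omega^\prime$ of $\dumm$-diameter at most $1/n$ containing points $z_n\in\Omega^\prime\cap\partial F_k$ and $w_n\in\Omega^\prime\cap\partial F_{k+1}$, and I set $x_n = f^{-1}(z_n)\in\Omega\cap\partial E_k$, $y_n = f^{-1}(w_n)\in\Omega\cap\partial E_{k+1}$. Writing $\tau = \dM(\Omega\cap\partial E_k,\Omega\cap\partial E_{k+1}) > 0$ for the chain separation and invoking the properness of $\overline{\Omega}^M$ from Lemma~\ref{lem:bdd-geom-ends} together with the hypothesis $\diam_M(\Omega\cap\partial E_k),\diam_M(\Omega\cap\partial E_{k+1})<\infty$, I pass to $\dum$-Cauchy subsequences of $\{x_n\}$ and $\{y_n\}$ with $\dum(x_n,x_{n+1}), \dum(y_n,y_{n+1}) < 2^{-n-3}\tau$. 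If $\{x_n\}$ were eventually constant at some $x^\ast\in\Omega$, the relation $\dumm(z_n,w_n)\le 1/n\to 0$ together with the fact that $f\colon(\Omega,\dum)\to(\Omega^\prime,\dumm)$ is a homeomorphism (the Mazurkiewicz and ambient topologies coincide) would force $y_n\to x^\ast$ in $\dum$, contradicting $\dum(x_n,y_n)\ge\tau$; hence a further refinement makes the $x_n$, and similarly the $y_n$, pairwise distinct.

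I next assemble chained continua from these Cauchy sequences. Using the definition of $\dum$, I pick continua $\alpha_n,\beta_n\subset\Omega$ with $\{x_n,x_{n+1}\}\subset\alpha_n$, $\{y_n,y_{n+1}\}\subset\beta_n$, and $\diam_M(\alpha_n),\diam_M(\beta_n)\le 2^{-n-2}\tau$. For each $N\in\N$ I set $\tilde A_N=\bigcup_{n=1}^N\alpha_n$ and $\tilde B_N=\bigcup_{n=1}^N\beta_n$; consecutive members share an endpoint, so these are continua in $\Omega$ with $\diam_M(\tilde A_N),\diam_M(\tilde B_N)\le\tau/4$. Any continuum in $\Omega$ joining $\tilde A_N$ to $\tilde B_N$, unioned with $\tilde A_N\cup\tilde B_N$, gives a continuum from $x_1\in\Omega\cap\partial E_k$ to $y_1\in\Omega\cap\partial E_{k+1}$ of diameter at least $\tau$, forcing $\dM(\tilde A_N,\tilde B_N)\ge\tau/2$. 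Letting $U=\{y\in\Omega : \dum(y,\tilde A_N)<\tau/4\}\subset B_M(x_1,\tau/2)$, Ahlfors $Q$-regularity of $(\Omega,\dum)$ bounds $\mu(U)$ uniformly in $N$, and the density $(4/\tau)\chi_U$ is admissible for $\Gamma(\tilde A_N,\tilde B_N;\Omega)$, since every rectifiable curve from $\tilde A_N$ to $\tilde B_N$ accrues $\dum$-length at least $\tau/4$ inside $U$ before exiting (using $\dum\le\ell_{\dum}=\ell_{d_X}$). Thus $\Mod_Q(\Gamma(\tilde A_N,\tilde B_N;\Omega))\le C_1$, and geometric quasiconformality of $f$ transports this into a uniform upper bound $\Mod_Q(\Gamma(f(\tilde A_N),f(\tilde B_N);\Omega^\prime))\le C_2$.

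I close by contradicting the uniform bound via the Loewner property. The continua $f(\tilde A_N)$ and $f(\tilde B_N)$ contain the distinct points $\{z_1,\dots,z_N\}$ and $\{w_1,\dots,w_N\}$ respectively, so for $N\ge 2$ their $\dumm$-diameters are bounded below by the positive numbers $\dumm(z_1,z_2)$ and $\dumm(w_1,w_2)$. Since $\gamma_N$ joins $z_N\in f(\tilde A_N)$ to $w_N\in f(\tilde B_N)$, I have $\dM^\prime(f(\tilde A_N),f(\tilde B_N))\le 1/N$, so the relative distance of $f(\tilde A_N)$ and $f(\tilde B_N)$ in $(\Omega^\prime,\dumm)$ tends to $0$. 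The strong $Q$-Loewner property of $(\Omega^\prime,\dumm)$, which follows from its Ahlfors $Q$-regularity and $Q$-Poincar\'e inequality via Theorem~\ref{thm:HeiK}, then forces $\Mod_Q(\Gamma(f(\tilde A_N),f(\tilde B_N);\Omega^\prime))\to\infty$, contradicting the uniform bound $C_2$. The main obstacle is engineering continua in $\Omega^\prime$ that are arbitrarily $\dumm$-close while retaining a uniform positive $\dumm$-diameter; the chained construction of $\tilde A_N,\tilde B_N$, powered by the properness of $\overline{\Omega}^M$ and the Cauchy extraction, is precisely what delivers this.
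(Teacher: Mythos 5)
Your proof is correct and follows essentially the same route as the paper's: assume the Mazurkiewicz distance is zero, pull the approximating sequences back to $\Omega$, chain them into continua of controlled $\dum$-diameter that remain $\dum$-separated by a fixed fraction of $\tau$, bound the $Q$-modulus of the connecting curve family from above, and then let the Loewner property of $(\Omega^\prime,\dumm)$ force the modulus of the image family to infinity, contradicting geometric quasiconformality. One point where your write-up is actually tighter than the paper's: you localize the admissible density to a bounded $\dum$-neighborhood $U$ of $\tilde A_N$ and invoke Ahlfors regularity to bound $\mu(U)$, whereas the paper uses a density proportional to $\chi_\Omega$ and asserts $\mu(\Omega)<\infty$, which is not guaranteed when $\Omega$ is unbounded.
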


Chains with $\diam_M(\Omega\cap\partial E_k) < \infty$ for all $k$ exist for all types of ends by Lemma \ref{lem:bdd-geom-ends} (for type (a)), Lemma \ref{lem:standard-rep-b} (for type (b)), and Lemma \ref{lem:standard-rep} (for type (c)).  Here $\dM^\prime$ is the distance between the two sets with respect to the
Mazurkiewicz metric $\dumm$. 

\begin{proof}

Suppose \eqref{eq:positive-dist-bdy} is not the case for some positive integer $k$. Then we can find two sequences
$w_j\in \Omega^\prime\cap\partial F_k$ and $z_j\in \Omega^\prime\cap\partial F_{k+1}$ such that
$\dumm(w_j,z_j)\to 0$ as $j\to\infty$. Let $x_j\in\Omega\cap\partial E_k$ be such that $f(x_j)=w_j$,
and $y_j\in\Omega\cap\partial E_{k+1}$ be such that $f(y_j)=z_j$. 
By passing to a subsequence if necessary, we may assume that $w_1\ne w_2$ and that $z_1\ne z_2$;
therefore we also have $x_1\ne x_2$ and $w_1\ne w_2$.
Note that as
\[
\tau_k:=\dM(\Omega\cap\partial E_k,\Omega\cap\partial E_{k+1})>0,
\]
we must have that $\infty>\diam_M(E_1)\ge\dum(x_j,y_j)\ge \tau_k$ for each $j$.

By Lemma~\ref{lem:bdd-geom-ends} we know that $\overline{\Omega}^M$ is proper, and so sequences
that are bounded in $\Omega$ have a subsequence that converges in $\dum$ (and hence in 
$d_X$ as well) to some point in $\overline{\Omega}^M$. By~\cite[Theorem~1.1]{BBS1}
we know that there is a bijective correspondence between singleton prime ends and
points in $\partial_M\Omega$.
Therefore
by passing to a subsequence if necessary, we can find $\zeta,\xi\in\partial_P\Omega$ with 
$\zeta\ne \xi$
such that $x_j\to\zeta$ and $y_j\to\xi$. Note that $\dum(\xi,\zeta)\ge \tau_k$. So we can choose
chains $\{G_m\}\in\xi$ and $\{H_m\}\in\zeta$ such that for each $m$ the completion in $\dum$ of 
$G_m$ and the completion of $H_m$ in $\dum$ are disjoint (indeed, we can ensure that
$G_m\subset B_M(\xi, \tau_k/3)$ and $H_m\subset B_M(\zeta,\tau_k/3)$). As $x_j\to\zeta$, 
by passing to a further subsequence if needed, we can ensure that for each positive
integer $j$, $x_j\in H_1$; similarly, we can assume that $y_j\in G_1$. As $H_1$ and $G_1$ are open connected
subsets of $\Omega$ and $X$ is locally path-connected, it follows that for each $j$ there is a path
$\gamma_j$ in $H_1$ connecting $x_j$ to $x_{j+1}$; similarly there is a path $\beta_j$ in $G_1$
connecting $y_j$ to $y_{j+1}$. For each positive integer $n$ let $\alpha_n$ be the concatenation 
of $\gamma_j$, $j=1,\cdots, n$, and let $\sigma_n$ be the concatenation of $\beta_j$,
$j=1,\cdots, n$. Then $\alpha_n$ and $\sigma_n$ are continua in $\Omega$ with 
$\dM(\alpha_n,\sigma_n)\ge \tau_k/3$. Therefore for each $n$ we have
\[
\Mod_Q(\Gamma(\alpha_n,\sigma_n))\le \frac{3^Q}{\tau_k^Q}\mu(\Omega)<\infty.
\]
On the other hand, with $A_n=f(\alpha_n)$ and $\Sigma_n=f(\sigma_n)$, we know that
$\dM^\prime(A_n,\Sigma_n)\to 0$ as $n\to\infty$. 
However, 
\begin{align*}
\liminf_{n\to\infty}\diam_M^\prime(A_n)&\ge \diam_M^\prime(A_1)>0,\\
\liminf_{n\to\infty}\diam_M^\prime(\Sigma_n)&\ge \diam_M^\prime(\Sigma_1)>0.
\end{align*}
Therefore by the Ahlfors regularity together with
the Poincar\'e inequality on $\Omega^\prime$ with respect to $\dumm$, we know that
\[
\Mod_Q(\Gamma(A_n,\Sigma_n))\to\infty\text{ as }n\to\infty.
\]
Since
\[
f(\Gamma(\alpha_n,\sigma_n))=\Gamma(A_n,\Sigma_n),
\]
this violates the geometric quasiconformality of $f$, see Theorem~\ref{thm:HeiK}. 
Hence~\eqref{eq:positive-dist-bdy} must hold true.
\end{proof}

\begin{lemma}\label{lem: ends-to-ends}
Suppose that both $(\Omega,\dum)$ and $(\Omega^\prime,\dumm)$ are Ahlfors $Q$-regular for some
$Q>1$ and that they both support a $Q$-Poincar\'e inequality with respect to the respective 
Ahlfors regular
Hausdorff measure. Let $\ms{E}$ be an end of any type of $\Omega$.  
Let $\{E_k\} \in \ms{E}$ be an appropriate chain with 
$\diam_M(\Omega\cap\partial E_k)<\infty$ for all $k$ and let $F_k = f(E_k)$.  Then,
\begin{enumerate}
\item[(i)] If $\diam(F_k) < \infty$ for some $k$, then $\{F_k\}$ is a chain 
corresponding to an end of type (a).
\item[(ii)] If $\diam(F_k) = \infty$ for all $k$ and $\bigcap_k \overline{F_k} \neq \emptyset$, 
then $\{F_k\}$ is a chain corresponding to an end of type (b).
\item[(iii)] If $\diam(F_k) = \infty$ for all $k$ and $\bigcap_k \overline{F_k} = \emptyset$, 
then $\{F_k\}$ is a chain corresponding to an end of type (c).
\end{enumerate}
\end{lemma}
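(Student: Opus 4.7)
The plan is to use the already-established uniform positive separation
$\dM^\prime(\Omega^\prime\cap\partial F_k,\Omega^\prime\cap\partial F_{k+1})>0$ from
Lemma~\ref{image separation inequality} together with the nestedness
$F_{k+1}\subset F_k$ and connectedness of each $F_k$ (both immediate from $f$ being a
homeomorphism) as the common scaffolding for all three cases. A further common piece is the containment
$\bigcap_k\overline{F_k}\subset\partial\Omega^\prime$: if some $y\in\bigcap_k\overline{F_k}$
lay in $\Omega^\prime$, one picks $z_k\in F_k$ with $z_k\to y$ in $d_Y$; then
$x_k:=f^{-1}(z_k)\in E_k$ satisfies $x_k\to f^{-1}(y)\in\Omega$, forcing
$f^{-1}(y)\in\bigcap_k\overline{E_k}\subset\partial\Omega$, a contradiction. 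A symmetric
argument, using the properness of $Y$ to extract a convergent subsequence of images of
approximators to a point of $\overline{E_k}\cap\partial\Omega$, produces a point of
$\overline{F_k}\cap\partial\Omega^\prime$ whenever $F_k$ is bounded in $d_Y$.

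Case~(i) is the easiest: if $\diam(F_{k_0})<\infty$ for some $k_0$, then $\overline{F_{k_0}}$ is
compact in $Y$ by properness of $Y$, hence so is $\overline{F_j}$ for every $j\ge k_0$; together
with the preceding observations this is precisely a type~(a) chain for $\{F_j\}_{j\ge k_0}$,
which represents the same end as $\{F_k\}$.

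For cases~(ii) and~(iii), the crucial additional ingredient is finiteness of
$\diam_M^\prime(\Omega^\prime\cap\partial F_k)$ for each~$k$. I would invoke
Lemma~\ref{lem:bdd-sets-to-bdd-sets} with $A=\Omega\cap\partial E_k$ and
$B=\Omega\cap\partial E_{k+1}$: both sets have finite $\dum$-diameter by hypothesis and
positive $\dum$-distance by the chain property of $\{E_k\}$, so at least one of
$\diam_M^\prime(f(A))$, $\diam_M^\prime(f(B))$ is finite. Consequently no two consecutive
indices can both yield $\diam_M^\prime(\Omega^\prime\cap\partial F_k)=\infty$, and the
subsequence of indices where the image boundary diameter is finite is cofinal in $\N$ and
yields an equivalent chain, so after relabeling we may assume
$\diam_M^\prime(\Omega^\prime\cap\partial F_k)<\infty$ for every~$k$. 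In case~(ii),
the hypotheses of Corollary~\ref{cor:Reduce-b} are then verified, namely nestedness,
finite Mazurkiewicz boundary diameter, positive Mazurkiewicz boundary separation, and
non-empty intersection of closures contained in $\partial\Omega^\prime$, so $\{F_{2k}\}$ is a
representative chain of a type~(b) prime end of~$\Omega^\prime$. In case~(iii), set
$K_k:=\overline{\Omega^\prime\cap\partial F_k}^Y$; the bound $d_Y\le\dumm$ makes
$\Omega^\prime\cap\partial F_k$ bounded in $d_Y$, so $K_k$ is compact in~$Y$ by properness.
Closedness of $\partial F_k$ in $Y$ yields $K_k\cap\Omega^\prime=\Omega^\prime\cap\partial F_k$,
whence $\diam_M^\prime(K_k\cap\Omega^\prime)<\infty$; and $F_k$, being an open connected subset
of $\Omega^\prime\setminus K_k$ whose relative boundary lies in $K_k$, is a connected
component of $\Omega^\prime\setminus K_k$. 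Combined with the hypothesis
$\bigcap_k\overline{F_k}=\emptyset$, this exhibits $\{F_k\}$ as a chain at~$\infty$.

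The main obstacle I anticipate is the uniform finiteness of
$\diam_M^\prime(\Omega^\prime\cap\partial F_k)$ in cases~(ii) and~(iii):
geometric quasiconformality controls moduli of curve families but not diameters directly,
and Lemma~\ref{lem:bdd-sets-to-bdd-sets}, which is the mechanism that converts that
modulus control into a boundary diameter bound, itself relies in an essential way on both
$(\Omega,\dum)$ and $(\Omega^\prime,\dumm)$ being Ahlfors $Q$-regular and $Q$-Loewner
via Theorem~\ref{thm:HeiK}. Without the subsequence-extraction trick based on this lemma,
neither the separating compacta of Corollary~\ref{cor:Reduce-b} nor the compact set $K_k$
of case~(iii) would be available.
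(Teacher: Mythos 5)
Your proposal is correct and follows essentially the same route as the paper's proof: the separation of image boundaries via Lemma~\ref{image separation inequality}, the finiteness of $\diam_M^\prime(\Omega^\prime\cap\partial F_k)$ via Lemma~\ref{lem:bdd-sets-to-bdd-sets} applied to consecutive boundary pieces, Corollary~\ref{cor:Reduce-b} for case~(ii), and the compact set $K_k=\overline{f(\Omega\cap\partial E_k)}$ with the ``$F_k$ is a full component since $\partial F_k\subset K_k$'' argument for case~(iii). The only cosmetic difference is that you pass to a cofinal subsequence of indices with finite image boundary diameter, whereas the paper observes directly that at most one index can fail; both are adequate.
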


We use the phrase ``appropriate chain'' to mean one which has the properties in Lemma \ref{lem:standard-rep-b} for type (b) or of the form given in Lemma \ref{lem:standard-rep} for type (c).  As $f$ is a homeomorphism, this means that $f$ induces a map from ends of $\Omega$ to ends of $\Omega'$.  

\begin{proof}
Let $\ms{E}$ be an end and let $\{E_k\}$ be a chain representing $\ms{E}$ with 
$\diam_M(\Omega\cap\partial E_k) < \infty$ for all $k$.  Let $F_k = f(E_k)$.  
The proof will be split into several cases.  We note that in all cases, the containments $F_{k+1} \subseteq F_k$ are immediate and the separation inequality~\eqref{eq:positive-dist-bdy} follows 
from Lemma~\ref{image separation inequality}, so we only check the other conditions.

\noindent {\bf Case 1 [(a),(b),(c) $\to$ (a)]:} Suppose $\diam(F_k) < \infty$ for some $k$. By considering the tail end of this sequence, we assume $\diam(F_k) < \infty$ for all $k$.  Then, as $f$ is a homeomorphism, each $F_k$ is a bounded, connected, open set.  The sets $\overline{F_k}$ are compact as $Y$ is proper.  As $Y$ is complete, it follows that $\bigcap_k \overline{F_k} \neq \emptyset$.  The fact that $\bigcap_k \overline{F_k} \cap \Omega' = \emptyset$ follows as $f$ is a homeomorphism: if $x' = f(x) \in \bigcap_k \overline{F_k} \cap \Omega'$, then as $\Omega'$ is open we can find $r'>0$ with $B(x', r) \subset \Omega'$.  Then, there is an $r > 0$ with $B(x, r) \subset f^{-1}(B(x', r'))$.  For all types of ends, there is an index $I$ with $E_I \cap B(x, r/2) = \emptyset$.  Hence $f(B(x, r/2)) \cap F_I = \emptyset$, and so $x \notin \overline{F_I}$.  In particular, $\overline{F_k} \cap \partial \Omega' \neq \emptyset$ for all $k$, so the sets $F_k$ are admissible, and $\bigcap_k \overline{F_k} \subset \partial \Omega$, so $\{F_k\}$ is a chain.

\noindent {\bf Case 2 [(a),(b),(c) $\to$ (b)]:} We assume that $\diam(F_k) = \infty$ for all $k$ and that $\bigcap_k \overline{F_k} \neq \emptyset$.  As in Case 1, we must have $\bigcap_k \overline{F_k} \subseteq \partial \Omega'$.  Moreover, $\overline{F_k}$ is proper for all $k$ as $Y$ is proper.  
Now the fact that $\{F_k\}$ corresponds to an end of type~(b) follows from Corollary~\ref{cor:Reduce-b}
together with Lemma~\ref{lem:bdd-sets-to-bdd-sets}.
%
%
%

\noindent {\bf Case 3 [(a),(b),(c) $\to$ (c)]:}  We assume that $\diam(F_k) = \infty$ 
for all $k$ and that $\bigcap_k \overline{F_k} = \emptyset$.  We only need to check 
that there exists a compact subset $K_k \subset Y$ with 
$\diam_M^\prime(K_k \cap \Omega') < \infty$ such that $F_k$ is a component of 
$\Omega' \setminus K_k$.  Note that for each $k$ we have
$\diam_M(\Omega\cap\partial E_k)<\infty$, and 
$\dM(\Omega\cap\partial E_k,\Omega\cap\partial E_{k+1})>0$. Therefore
for all except at most one $k$ we have that $\diam_M^\prime(\Omega^\prime\cap\partial f(E_k))<\infty$
by Lemma~\ref{lem:bdd-sets-to-bdd-sets}. Set $K_k=\overline{f(\partial E_k \cap \Omega')}$.
To see that 
$F_k=f(E_k)$ is a component of $\Omega' \setminus K_k$, we note that as $F_k$ is connected it 
is contained in some component of $\Omega' \setminus K_k$.  If $F_k$ is not this 
entire component, then as open connected sets are path connected, this component 
contains a point of $\partial F_k$, but this is impossible as $\partial F_k \subseteq K_k$ 
because $f$ is a homeomorphism.
%
%
%
\end{proof}

\begin{thm}\label{thm:main-geomQC}
Suppose that both $(\Omega,\dum)$ and $(\Omega^\prime,\dumm)$ are Ahlfors $Q$-regular for some
$Q>1$ and that they both support a $Q$-Poincar\'e inequality with respect to the respective Ahlfors regular 
Hausdorff measure.  Let $f \colon \Omega \to \Omega'$ be a geometric quasiconformal mapping.  Then, 
$f$ induces a homeomorphism $f \colon \overline{\Omega}^P \to \overline{\Omega'}^P$.
\end{thm}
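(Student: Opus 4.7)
The plan is to mirror the argument in Theorem~\ref{thm:BQS-fp-homeo}, exploiting Remark~\ref{rmk:usefulness1}: the only place in that proof where the BQS property was used was to guarantee that if $E_{k+1}\subset E_k$ are connected open subsets of $\Omega$ with $\dM(\Omega\cap\partial E_k,\Omega\cap\partial E_{k+1})>0$, then the corresponding Mazurkiewicz separation persists for $\{f(E_k)\}$ in $\Omega^\prime$. In the present setting, Lemma~\ref{image separation inequality} supplies exactly this positivity via the $Q$-Loewner property (Theorem~\ref{thm:HeiK}) for $(\Omega^\prime,\dumm)$, provided the chains are chosen so that $\diam_M(\Omega\cap\partial E_k)<\infty$ for every $k$. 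Lemmas~\ref{lem:bdd-geom-ends}, \ref{lem:standard-rep-b} and \ref{lem:standard-rep} guarantee that such representative chains exist inside every prime end of type~(a), (b), and (c) respectively.

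Given a prime end $\ms{E}$ of $\Omega$, I would pick such an appropriate chain $\{E_k\}\in\ms{E}$ and define $f_P(\ms{E})$ to be the end represented by $\{f(E_k)\}$. Lemma~\ref{lem: ends-to-ends} confirms that $\{f(E_k)\}$ is indeed a chain (of some type, possibly different from that of $\ms{E}$). Independence of $f_P(\ms{E})$ of the choice of representative, and preservation of divisibility among ends, follow from the routine observation that if $\{E_k\}\,|\,\{G_k\}$ for appropriate representatives, then $\{f(E_k)\}\,|\,\{f(G_k)\}$, since $f$ is a homeomorphism. Because the geometric quasiconformality constant, Ahlfors $Q$-regularity, and $Q$-Poincar\'e inequality hypotheses are symmetric in $\Omega$ and $\Omega^\prime$, applying the same construction to $f^{-1}$ produces a two-sided inverse of $f_P$. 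Primality is then preserved: if $\ms{G}\,|\,f_P(\ms{E})$ in $\Omega^\prime$, pulling back by $f^{-1}_P$ yields $f^{-1}_P(\ms{G})\,|\,\ms{E}$, forcing $f^{-1}_P(\ms{G})=\ms{E}$ by primality of $\ms{E}$, and hence $\ms{G}=f_P(\ms{E})$.

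For continuity, I would argue exactly as in the final portion of the proof of Theorem~\ref{thm:BQS-fp-homeo}. Given $\zeta_n\to\zeta$ in $\overline{\Omega}^P$, the case $\zeta\in\Omega$ is handled by the homeomorphism property of $f$ on $\Omega$. The case $\zeta\in\partial_P\Omega$ is reduced, by splitting the sequence, to the two subcases where $\zeta_n\in\Omega$ for all $n$ or $\zeta_n\in\partial_P\Omega$ for all $n$. Fixing an appropriate representative chain $\{E_k\}\in\zeta$ and using that $f$ preserves the inclusions $\zeta_n\in E_k$ (respectively, $E_{m,n}^{(n)}\subset E_k$) transfers convergence of $\zeta_n$ to $\zeta$ into convergence of $f_P(\zeta_n)$ to $f_P(\zeta)$. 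Continuity of $f_P^{-1}$ follows by symmetry.

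The main obstacle is already packaged inside Lemma~\ref{image separation inequality}, whose proof combines the Loewner condition in $\Omega^\prime$ with the requirement that $\diam_M(\Omega\cap\partial E_k)<\infty$. At the level of the theorem itself, the subtlety is the phenomenon articulated in Lemma~\ref{lem: ends-to-ends}: an end of one type in $\Omega$ may be sent to an end of a different type in $\Omega^\prime$, so one cannot simply argue ``chain of type~(c) goes to chain of type~(c)''. This is neutralized by working throughout with the standard representatives supplied by Lemmas~\ref{lem:standard-rep-b} and~\ref{lem:standard-rep}, since the chain conditions that a homeomorphism always preserves (nested inclusion and absence of a limit point in $\Omega^\prime$) determine the resulting type automatically, and the one remaining chain condition (positive Mazurkiewicz separation) is precisely what Lemma~\ref{image separation inequality} provides.
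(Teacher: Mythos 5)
Your proposal is correct and follows essentially the same route as the paper: the paper's proof of Theorem~\ref{thm:main-geomQC} likewise reduces everything to Lemma~\ref{lem: ends-to-ends} (which in turn rests on Lemma~\ref{image separation inequality} and the standard representative chains from Lemmas~\ref{lem:bdd-geom-ends}, \ref{lem:standard-rep-b}, \ref{lem:standard-rep}), obtains primality by pulling a dividing end back through $f^{-1}$, and gets continuity and the inverse from the fact that $f$ preserves the chain inclusions and that the hypotheses are symmetric in $\Omega$ and $\Omega^\prime$. Your write-up is somewhat more explicit than the paper's (e.g.\ on well-definedness and on the type-changing phenomenon), but no new ideas are involved.
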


\begin{proof}
By Lemma \ref{lem: ends-to-ends}, $f$ induces a map from ends of $\Omega$ to ends of $\Omega'$.  As 
continuity is determined by inclusions of sets in chains and $f$ as a homeomorphism preserves these 
inclusions, the extended $f$ is also a homeomorphism.   

To see that $f$ maps prime ends to prime ends, suppose that $\ms{E}$ is a prime end of $\Omega$.  Then, 
by Lemma~\ref{lem: ends-to-ends}, $f(\ms{E})$ is an end of $\Omega'$.  
If $\ms{F}$ is an end with $\ms{F} | f(\ms{E})$, then by applying Lemma~\ref{lem: ends-to-ends} to $f^{-1}$, 
we see that $f^{-1}(\ms{F})$ is an end with $f^{-1}(\ms{F}) | \ms{E}$.  As $\ms{E}$ is prime, it follows 
that $\ms{E} | f^{-1}(\ms{F})$ as well, and so $f(\ms{E}) | \ms{F}$.  It follows that $f(\ms{E})=\ms{F}$, 
so $f(\ms{E})$ is prime as $\ms{F}$ was arbitrary.
\end{proof}

We end this paper by stating the following two open problems:
\begin{enumerate}
  \item If $\Omega$ is a domain (open connected set) in a proper locally path-connected metric space and
$(\Omega,\dum)$ is Ahlfors $Q$-regular and supports a $Q$-Poincar\'e inequality for some $Q>1$, then could
$\Omega$ have a prime end of type~(b)?
  \item If $\Omega$ and $\Omega^\prime$ are domains that satisfy the hypotheses of this section and 
$f:\Omega\to\Omega^\prime$ is geometrically quasiconformal, then is its homeomorphic extension
$f:\overline{\Omega}^P\to\overline{\Omega^\prime}^P$ also quasiconformal, and does quasiconformality
make sense in this situation when $\overline{\Omega}^P$ and/or $\overline{\Omega^\prime}^P$ are not
metric spaces but are only topological spaces? If $\Omega$ and $\Omega^\prime$ are Euclidean domains
with $\overline{\Omega}^P=\overline{\Omega}$ and $\overline{\Omega^\prime}^P=\overline{\Omega^\prime}$
then the results of~\cite{JoSmr} give a criterion under which $f$ extends to a quasiconformal mapping.
\end{enumerate}

\vskip .5cm

\noindent Address:\\

\vskip .2cm

Department of Mathematical Sciences, University of Cincinnati, P.O.Box~210025, Cincinnati, OH 45221-0025, USA.\\

\noindent E-mail: J.K.:~{\tt klinejp@mail.uc.edu}, J.L.:~{\tt lindqujy@uc.edu}, N.S.:~{\tt shanmun@uc.edu}

\end{document}